\newtheorem{thm}{Theorem}[section]
\newtheorem{lem}{Lemma}[section]
\newtheorem{cor}{Corollary}[section]
\newtheorem{conj}{Conjecture}[section]
\newtheorem{defn}{Definition}[section]
\numberwithin{equation}{section}
\def\Z{\mathbb Z}
\def\R{\mathbb R}
\def\d{\partial}
\def\a{\alpha}
\def\b{\beta}
\def\g{\gamma}
\def\e{\epsilon}
\title{Stratified convexity \& concavity of Gradient Flows on Manifolds with Boundary}
\author{Gabriel Katz}
\address{5 Bridle Path Circle, Framingham, MA 01701, USA}
\email{gabkatz@gmail.com}
\begin{document}
\maketitle

\begin{abstract} As has been observed by Morse \cite{Mo}, any generic vector field $v$ on a compact smooth manifold $X$ with boundary gives rise to a stratification of the boundary $\d X$ by compact submanifolds $\{\d_j^\pm X(v)\}_{1 \leq j \leq \dim(X)}$, where  $\textup{codim}(\d_j^\pm X(v))= j$. 

Our main observation is that this stratification reflects  the stratified convexity/concavity of the boundary $\d X$ with respect to the $v$-flow. We study the behavior of this stratification under deformations of the vector field $v$.  We also investigate the restrictions that the existence of a convex/concave traversing $v$-flow imposes on the topology of $X$. 

Let $v_1$ be the orthogonal projection of $v$ on the tangent bundle of $\d X$. We link the dynamics of the $v_1$-flow on the boundary with the property of $v$ in $X$ being convex/concave. This linkage is an instance of more general phenomenon that we call ``holography of traversing fields"---a subject of a different paper to follow.
\end{abstract}

\section{Introduction}

This paper is the first in a series that investigates the Morse Theory and gradient flows on smooth compact manifolds with boundary, a special case of the well-developed Morse theory on stratified spaces (see \cite{GM}, \cite{GM1}, and \cite{GM2}). For us, however, the starting starting point and the source of inspiration is the 1929 paper of Morse \cite{Mo}. 

We intend to present to the reader a version of the Morse Theory in which the critical points remain behind the scene, while shaping the geometry of the boundary!  Some of the concepts that animate our approach can be found in \cite{K}, where they are adopted to the special environment a $3D$-gradient flows. These notions include \emph{stratified convexity or concavity} of traversing flows in connection to the boundary of the manifold. That concavity serves as a measure of \emph{intrinsic complexity} of a given manifold $X$ with respect to any traversing flow.  Both convexity and concavity have strong topological implications. 

Another central theme that will make its first brief appearance in this paper is \emph{the holographic properties of traversing flows} on manifolds with boundary. The ultimate aim here is to reconstruct (perhaps, only partially) the bulk of the manifold and the dynamics of the flow on it from some residual structures on the boundary. Thus the name ``holography".
\smallskip

In Section 2, for so-called \emph{boundary generic} fields $v$ on $X$ (see Definition \ref{def2.1}), we explore the Morse stratification $\{\d_j^\pm X(v)\}_j$ of the boundary $\d X$ (see formula \ref{eq2.1} and \cite{Mo}, induced by the vector field $v$ on $X$. 

In Section 3, we investigate the degrees of freedom to change this stratification by deforming a given vector field within the space of gradient-like fields (Theorem \ref{th3.2}, Corollary \ref{cor3.2}, and Corollary \ref{cor3.3}). 

In Section 4,  for vector fields on compact manifolds, we introduce the pivotal notion of boundary $s$-convexity/$s$-concavity, $s = 2, 3, \dots $ (see Definition \ref{def4.1}). Then we explore some topological implications of the existence of a boundary $2$-convex/$2$-concave traversing field on $X$ (see Lemma \ref{lem4.2}, Corollary \ref{cor4.2}, Corollary \ref{cor4.3}, and Corollary  \ref{cor4.4}).   

Let $v_1$ denote the orthogonal projection of the field $v|_{\d X}$ on the bundle $T(\d X)$ tangent to the boundary. Occasionally, we can determine whether a given field $v$ is convex/concave just by observing the behavior of the $v_1$-trajectories on the boundary $\d_1X$ (Theorem \ref{th4.1}, Theorem \ref{th4.2}). We view the possibility of such determination as an instance of a more general phenomenon, which we call ``holography". This phenomenon will occupy us fully in a different paper.

The Eliashberg surgery theory of folding maps \cite{E1}, \cite{E2} helps us to describe the patterns of Morse stratifications for traversing $3$-concave and $3$-convex fields (Theorem \ref{th5.1}, Conjecture \ref{conj5.1},  and Corollary \ref{cor5.1}).

\section{The Morse  Stratification $\{\d_j^+X(v)\}$}
Inspired by \cite{Mo}, we start by introducing some basic notions and constructions that describe the way in which generic vector fields on a compact smooth manifold interact with its boundary.

Let $X$ be a compact smooth $(n + 1)$-dimensional manifold with a boundary $\d X$. Let $v$ be a smooth vector field on $X$ which does not vanish on the boundary $\d X$.  As a rule, we assume that $X$ is properly contained in a $(n + 1)$-dimensional manifold $\hat X$ and that the field $v$ extends to a field $\hat v$ on $\hat X$ so that $v|_{\hat X \setminus X} \neq 0$. In fact, we always treat the pair $(\hat X, \hat v)$ as a \emph{germ} of a space and a field in the vicinity of the given pair $(X, v)$.

\begin{figure}[ht]
\centerline{\includegraphics[height=1.5in,width=2in]{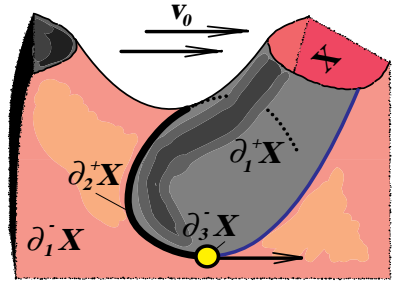}} 
\bigskip
\caption{The Morse stratification generated by the horizontal field $v_0$ on a solid $X$ bounded by the saddle surface $\d_1X$.}
\end{figure}

\begin{figure}[ht]
\centerline{\includegraphics[height=1.8in,width=4in]{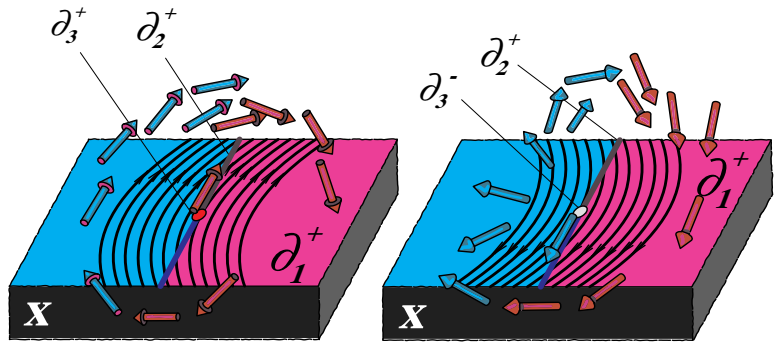}} 
\bigskip
\caption{A generic field $v$ in the vicinity of a cusp point on the boundary of a solid $X$ generates the Morse stratification $\d_3^+X \subset \d_2^+X \subset \d_1^+X$ (the left diagram) or  the Morse stratification $\d_3^-X \subset \d_2^+X \subset \d_1^+X$ (the right diagram).}
\end{figure}

Often we will consider vector fields only with the isolated \emph{Morse-type} singularities (zeros) located away from the boundary.  This means that $v$, viewed as a section of the tangent bundle $T(X)$, is transversal  its zero section. In other words, in the vicinity of each singular point, there is a local system of coordinates $(x_1, \dots , x_{n + 1})$ such that the field $v$ can be represented as $v = (a_1x_1,\, \dots , \, a_{n + 1}x_{n+1})$, where all $a_i \neq 0$.

To achieve some uniformity in our notations, let $\d_0X := X$ and $\d_1X := \d X$.

The vector field $v$ gives rise to a partition $\d_1^+X \cup \d_1^-X $ of the boundary $\d_1X$ into  two sets: the locus $\d_1^+X$, where the field is directed inward of $X$, and  $\d_1^-X$, where it is directed outwards. We assume that $v$, viewed as a section of the quotient  line bundle $T(X)/T(\d X)$ over $\d X$, is transversal to its zero section. This assumption implies that both sets $\d^\pm_1 X$ are compact manifolds which share a common boundary $\d_2X := \d(\d_1^+X) = \d(\d_1^-X)$. Evidently, $\d_2X$ is the locus where $v$ is \emph{tangent} to the boundary $\d_1X$.

Morse has noticed that, for a generic vector field $v$, the tangent locus $\d_2X$ inherits a similar structure in connection to $\d_1^+X$, as $\d _1X$ has in connection to $X$ (see \cite{Mo}). That is, $v$ gives rise to a partition $\d_2^+X \cup \d_2^-X $ of  $\d_2X $ into  two sets: the locus $\d_2^+X$, where the field is directed inward of $\d_1^+X$, and  $\d_2^-X$, where it is directed outward of $\d_1^+X$. Again, let us assume that $v$, viewed as a section of the quotient  line bundle $T(\d_1X)/T(\d_2X)$ over $\d_2X$, is transversal to its zero section.

For generic fields, this structure replicates itself: the cuspidal locus $\d_3X$ is defined as the locus where $v$ is tangent to $\d_2X$; $\d_3X$ is divided into two manifolds, $\d_3^+X$ and $\d_3^-X$. In  $\d_3^+X$, the field is directed inward of $\d_2^+X$, in  $\d_3^-X$, outward of $\d_2^+X$. We can repeat this construction until we reach the zero-dimensional stratum $\d_{n+1}X = \d_{n+1}^+X \cup  \d_{n+1}^-X$. 

These considerations motivate 

\begin{defn}\label{def2.1} 
We say that a smooth field $v$ on $X$ is \emph{boundary generic} if:
\begin{itemize}
\item $v|_{\d X} \neq 0$,
\item $v$, viewed as a section of the tangent bundle $T(X)$, is transversal to its zero section,
\item  for each $j = 1, \dots,  n+1$, the $v$-generated stratum $\d_jX$ is a  smooth submanifold of  $\d_{j-1}X$,
\item  the field  $v$, viewed as section of the quotient 1-bundle  $$T_j^\nu := T(\d_{j-1}X)/ T(\d_jX) \to \d_jX,$$ is transversal to the zero section of $T_j^\nu$ for all $j > 0$. 
\end{itemize}

We denote the space of smooth generic vector fields on $X$ by the symbol $\mathcal V^\dagger(X)$. \hfill\qed
\end{defn}

Thus a boundary generic vector field $v$  on $X$  gives rise to two  stratifications: 
\begin{eqnarray}\label{eq2.1}
\d X := \d_1X \supset \d_2X \supset \dots \supset \d_{n +1}X, \nonumber \\ 
X := \d_0^+ X \supset \d_1^+X \supset \d_2^+X \supset \dots \supset \d_{n +1}^+X 
\end{eqnarray}
, the first one by closed submanifolds, the second one---by compact ones.  Here $\dim(\d_jX) = \dim(\d_j^+X) = n +1 - j$. For simplicity, the notations ``$\d_j^\pm X$" do not reflect the dependence of these strata on the vector field $v$. When the field varies, we use a more accurate notation ``$\d_j^\pm X(v)$".

\smallskip
\noindent {\bf Remark 2.1.} Replacing $v$ with $-v$ affects the Morse stratification according to the formula: $$\d_j^+X(-v) = \d_j^\e X(v)$$, where $\e = +$ when $(n + 1) - j \equiv 0 \;\mod  (2)$, and $\e = -$ otherwise. \hfill \qed
\smallskip

We will postpone the proof of the theorem below until the second paper in this series of articles (see \cite{K3}, Theorem 6.6, an extension of Theorem \ref{th2.1}  below). There we will develop the needed analytical tools.

\begin{thm}\label{th2.1} Boundary generic  vector fields  form an open and dense subset $\mathcal V^\dagger(X)$ in the space $\mathcal V(X)$ of all smooth fields on $X$. 
\hfill \qed
\end{thm}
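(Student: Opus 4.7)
The plan is to prove openness and density separately, since each is a natural consequence of Thom-style transversality when set up correctly. For openness, I would observe that each of the four clauses of Definition \ref{def2.1} is either a non-vanishing condition or a transversality condition on a section of a vector bundle over a compact set. Non-vanishing of $v|_{\d X}$ is $C^0$-open by compactness of $\d X$; transversality of $v$ to the zero section of $T(X)$ is $C^1$-open by the standard openness of transversality on compact manifolds (the zero locus is empty or a submanifold, and one uses that transversality to a closed submanifold is stable). The iterated clause—that each restricted $v$ meets the zero section of $T_j^\nu\to\d_j X$ transversely—is also $C^1$-open \emph{provided} the strata $\d_j X$ deform continuously with $v$, which follows from the implicit function theorem applied to the defining equations of $\d_j X$ cut out by the previous jets of $v$.

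For density, I would argue by induction on $j$. The base step perturbs $v$ by an arbitrarily small $C^\infty$ section (supported near $\d X$) to ensure $v|_{\d X}\neq 0$ and transversality of $v$ to the zero section of $T(X)$; both are standard applications of Thom's theorem. For the inductive step, assume that $\d_1 X\supset\d_2 X\supset\cdots\supset\d_j X$ are already smooth submanifolds cut out transversely by the previous data. The locus $\d_{j+1}X$ is the zero set of the section of $T_j^\nu=T(\d_{j-1}X)/T(\d_j X)\to \d_j X$ determined by restricting $v$. Apply the parametric Thom transversality theorem to the family of such sections obtained by adding small perturbations of $v$ supported in a tubular neighborhood of $\d_j X$; generic perturbations yield a section transverse to the zero section, hence $\d_{j+1}X$ becomes a smooth submanifold and the next $T_{j+1}^\nu$-clause is met. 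Since the process terminates at $j=n+1$ and a finite intersection of open dense sets remains dense, we conclude density.

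The main obstacle is the interdependence of the strata: perturbing $v$ to achieve transversality at stage $j+1$ must not destroy the transversality already arranged at stages $1,\dots,j$. I would handle this by choosing the successive perturbations with progressively smaller $C^k$-norms in nested tubular neighborhoods $U_1\supset U_2\supset\cdots\supset U_{n+1}$ of the corresponding strata, so that each perturbation lies within the open set already secured. A cleaner, conceptually unified alternative is to express all the defining conditions as a single jet-transversality requirement: there is a canonical Whitney-stratified subset $\Sigma\subset J^{n+1}(X,TX)$ such that $v$ is boundary generic if and only if the $(n+1)$-jet $j^{n+1}v\colon X\to J^{n+1}(X,TX)$ is transverse to each stratum of $\Sigma$ (with appropriate strata for boundary contact orders). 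Thom's jet transversality theorem then yields openness and density in one stroke, at the cost of carefully identifying $\Sigma$—which is presumably exactly what the deferred analytical machinery in \cite{K3} makes rigorous.
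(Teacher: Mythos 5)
A preliminary caveat: the paper does not actually prove Theorem \ref{th2.1} in this text---it explicitly defers the proof to \cite{K3} (Theorem 6.6)---so the comparison is against the approach the paper signals it will take. That approach, sketched inside the proof of Theorem \ref{th2.2}, is to package the whole of Definition \ref{def2.1} into a single map $\Psi^\d(v)\colon \d_1X\to\R^n$, built from iterated Lie derivatives $\mathcal L_v^j z$ of an auxiliary function $z$ cutting out $\d X$, so that $\d_jX(v)=\Psi^\d(v)^{-1}(F_j)$ for a fixed complete flag $\{F_j\}$ and boundary genericity becomes transversality of $\Psi^\d(v)$ to every $F_j$; openness and density then follow from ordinary transversality theory for maps defined on the \emph{fixed} compact domain $\d_1X$. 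Your ``cleaner alternative''---a single jet-transversality condition against a stratified subset of a jet space---is essentially this idea in different clothing, so on that branch you are aligned with the paper. Your primary, stratum-by-stratum induction is a genuinely different decomposition. It can be made to work, but the real content sits in the density step: the section of $T_j^\nu$ over $\d_jX$ that you want to make transversal lives over a domain $\d_jX(v)$ that itself moves when you perturb $v$, so parametric Thom transversality is not directly quotable---you must check both that the admissible perturbations of $v$ only isotope the already-secured lower strata (your nested-neighborhood, shrinking-norm device handles this) and that they still realize arbitrary values of the quotient bundle $T_j^\nu$ at each point of the displaced stratum; the latter is true but requires an argument (e.g., a local normal form for $v$ near $\d_jX$), and this is precisely the analytical machinery the paper postpones to \cite{K3}. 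In short, the flag-map formulation buys a fixed domain and a fixed stratified target, eliminating the moving-target issue at the cost of constructing $\Psi^\d(v)$; your induction avoids that construction at the cost of the bookkeeping above. Both routes are viable; neither is carried out in detail in the present paper.
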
 

\begin{defn}\label{def2.2} We say  that a smooth vector field $v$ on $X$ is \emph{of the gradient type (or gradient-like)} for a  smooth function $f: X \to \R$ if: 
\begin{itemize}
\item the differential $df$ and the field $v$ vanish on the same locus $Z \subset X$, 
\item the function $df(v) > 0$ in $X \setminus Z$,
\item in the vicinity of $Z$, there exist a Rimannian metric $g$ on $X$ so that $v = \nabla_gf$, the gradient field of $f$ in the metric $g$. \hfill \qed
\end{itemize}
\end{defn}

\begin{defn}\label{def2.3} A smooth function $f: X \to \R$ is called \emph{Morse function} if its differential $df$, viewed as a section of the cotangent bundle $T^\ast(X)$, is transversal to the zero section. \hfill \qed
\end{defn}

Recall that, for a Morse function $f$ on a compact  $(n +1)$-manifold $X$,  the critical set $Z :=\{ x \in X |\;  df_x = 0\}$  is finite and each point $x \in Z$ has special local coordinates $(x_1, \dots , x_{n+1})$ such that $df = \sum_{1 \leq i \leq n+1} a_ix_i dx_i$, where $a_i \neq 0$ for all $i$ (for example, see \cite{GG}).

\begin{defn}\label{def2.4} Let $f: X \to \R$ be a smooth function and $v$ its gradient-like vector field. We say that the pair $(f, v)$ is \emph{boundary generic} if the field $v$ is boundary generic in the sense of Definition \ref{def2.1} and the restrictions of $f$ to each stratum $\d_jX := \d_jX(v)$ are Morse functions for all $0 \leq j \leq n$. \hfill \qed
\end{defn}

\begin{lem}\label{lem2.1} Let $V$ be a compact smooth manifold, and $Y$ a smooth manifold which is stratified by submanifolds $\{Y_j\}_j$. Let  $\mathcal Z = \mathcal Z(V, Y)$ be the space of smooth maps $\Psi: V \to Y$ which are transversal  to each stratum $Y_j$. Put $V_j^\Psi := \Psi^{-1}(Y_j)$. Next consider the space $\mathcal X = \mathcal X(V, Y)$ of pairs $(f, \Psi)$ such that $\Psi \in  \mathcal Z$ and $f: V \to \R$ has the property: $\{f|_{V_j^\Psi}\}_j$ are Morse functions for all $j$. Then $\mathcal X$ is open and dense in the space $C^\infty(V, Y\times \R)$.
\end{lem}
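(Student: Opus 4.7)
\medskip

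\noindent\textbf{Proof plan.} The plan is to separate openness from density and, in both parts, to treat the condition on $\Psi$ (transversality to strata) and the condition on $f$ (Morseness on the pulled-back strata) as decoupled as possible.  Throughout, I identify $(f,\Psi)$ with the map $\Phi=(\Psi,f)\colon V\to Y\times \R$, and equip $C^\infty(V,Y\times\R)$ with the Whitney $C^\infty$-topology.  Since $V$ is compact and the stratification $\{Y_j\}$ is locally finite, only finitely many strata meet the image of $\Psi$, so all intersections below are effectively finite; otherwise a Baire category argument in $C^\infty(V,Y\times\R)$ takes over.

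\medskip

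\noindent\emph{Openness.}  Fix $(f,\Psi)\in \mathcal X$.  Transversality of $\Psi$ to each $Y_j$ is a $C^1$-open condition on a compact manifold, and if $\Psi'$ is $C^\infty$-close to $\Psi$ then $\Psi'\in\mathcal Z$ and the submanifold $V_j^{\Psi'}=\Psi'^{-1}(Y_j)$ is obtained from $V_j^{\Psi}$ by a small ambient isotopy, depending $C^\infty$-continuously on $\Psi'$.  Consequently $f'|_{V_j^{\Psi'}}$ depends $C^2$-continuously on $(f',\Psi')$.  Since the set of Morse functions on a \emph{fixed} compact manifold is $C^2$-open, the set $\mathcal X$ is open.

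\medskip

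\noindent\emph{Density.}  Given $(f_0,\Psi_0)$, first apply Thom's transversality theorem (the ``stratified'' version, which follows by applying standard transversality to each stratum and using local finiteness) to find $\Psi_1\in\mathcal Z$ arbitrarily $C^\infty$-close to $\Psi_0$.  Now the submanifolds $V_j:=V_j^{\Psi_1}\subset V$ are fixed, smooth, and closed.  For each index $j$, define
\[
\mathcal M_j \;=\; \bigl\{\, f\in C^\infty(V,\R)\;:\; f|_{V_j} \text{ is a Morse function on } V_j\,\bigr\}.
\]
The set $\mathcal M_j$ is open in $C^\infty(V,\R)$ by the openness argument above applied to the fixed submanifold $V_j$, and it is dense: any $f_0|_{V_j}$ can be perturbed within $C^\infty(V_j,\R)$ to a Morse function by the standard genericity theorem, and, $V_j$ being a closed submanifold of $V$, this perturbation can be extended to a $C^\infty$-small perturbation of $f_0$ on $V$ via a tubular neighborhood and cutoff.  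The intersection $\bigcap_j \mathcal M_j$ is open and dense (finite intersection, or Baire).  Choose $f_1\in\bigcap_j\mathcal M_j$ close to $f_0$; then $(f_1,\Psi_1)\in\mathcal X$ is close to $(f_0,\Psi_0)$.

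\medskip

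\noindent\emph{Main obstacle.}  The subtle point is that the submanifolds $V_j^{\Psi}$ move with $\Psi$, so the Morse condition on the restrictions is really a joint condition on the pair $(f,\Psi)$.  The fix is the two-step density: first freeze $\Psi_1\in\mathcal Z$, then perturb $f$ alone.  What still needs care is that the extension from a perturbation of $f|_{V_j}$ to a perturbation of $f$ on $V$ must not destroy Morseness on other strata $V_k$; since Morseness is open, performing the extensions with sufficiently small $C^\infty$-norm handles this, which is why one takes the intersection of the open dense sets $\mathcal M_j$ rather than perturbing stratum by stratum sequentially.
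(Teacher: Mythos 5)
Your proposal is correct and follows essentially the same route as the paper: density is obtained by the identical two-step perturbation (first make $\Psi$ transversal to the strata, then perturb $f$ to make its restrictions to the now-fixed $V_j^{\Psi}$ Morse), and openness rests on the openness of transversality conditions over a compact source. The only cosmetic difference is that the paper packages the openness statement as a single transversality condition for the section $df$ of $T^\ast V\times Y\to V\times Y$ along the intersections $\Gamma_\Psi\cap(V\times Y_j)$, whereas you argue via continuous dependence of $V_j^{\Psi}$ on $\Psi$ and $C^2$-openness of Morse functions on a fixed compact manifold; these are equivalent in substance.
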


\begin{proof} Consider the space $(T^\ast V) \times Y$, where $T^\ast V$ denotes the cotangent bundle of $V$. The property $(f, \Psi) \in \mathcal X$ is equivalent to the property of the section $df$ of the bundle $$T^\ast V \times Y \to V \times Y$$  to be transversal to each (transversal) intersection of the $\Psi$-graph $\Gamma_\Psi \subset V \times Y$ with each stratum $V \times Y_j$.  The latter property defines a open set in $C^\infty(V, Y\times \R)$.  

In order to validate density of $\mathcal X$ in $C^\infty(V, Y\times \R)$, we first perturb a given map $\Psi: V \to Y$ to make it transversal to each stratum $Y_j \subset Y$, and then perturb a given function $f: V \to \R$ to make the section $df$ of $T^\ast V$ transversal to each manifold $V_j^\Psi := \Psi^{-1}(Y_j)$. 
\end{proof}

\begin{thm}\label{th2.2} The  boundary generic\footnote{in the sense of Definition \ref{def2.4}} Morse pairs $(f, v)$ on a compact manifold $X$ form an open and dense subset  in the space of all smooth functions $f: X \to \R$ and their gradient-like fields $v$. 
\end{thm}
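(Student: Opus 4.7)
The plan is to combine Theorem~\ref{th2.1} with Lemma~\ref{lem2.1} applied stratum by stratum, taking care that the extra freedom needed for genericity does not break the gradient-like relation.

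\textbf{Openness.} By Theorem~\ref{th2.1}, boundary genericity of $v$ is an open condition, so the strata $\d_jX(v)$ for $0\le j\le n+1$ are smooth submanifolds which vary continuously (in the $C^\infty$-sense) under $C^\infty$-small perturbations of $v$. For each $j$, the Morse property of $f|_{\d_jX(v)}$ is an open condition on the pair $(f,v)$: it is the transversality of $df|_{\d_jX(v)}$ to the zero section of $T^\ast(\d_jX(v))$, and this persists under small joint perturbations of $f$ and $v$. The gradient-like property of Definition~\ref{def2.2} is also open: away from the finite Morse-type zero set $Z$ of $v$, the inequality $df(v)>0$ is open on the compact complement of a neighborhood of $Z$, and in Morse-type local coordinates near $Z$ the last condition survives small joint perturbations. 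Intersecting finitely many open conditions ($0\le j\le n$) gives openness.

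\textbf{Density, Stage 1: boundary genericity.} Start from an arbitrary pair $(f_0,v_0)$ with $v_0$ gradient-like for $f_0$. By Theorem~\ref{th2.1}, we may perturb $v_0$ to a boundary generic field $v$ arbitrarily close to $v_0$. We can choose this perturbation to be supported outside a small neighborhood $U$ of the critical locus $Z_0$ of $v_0$ (the boundary transversality conditions all live near $\d X$, which is disjoint from $Z_0$). Since $v=v_0$ on $U$, the gradient-like relation with $f_0$ persists near $Z_0$; outside $U$, $df_0(v_0)>0$ is a strict open inequality on a compact set, so $df_0(v)>0$ as well when $v$ is $C^0$-close to $v_0$. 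Hence $v$ is still gradient-like for $f_0$.

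\textbf{Density, Stage 2: Morse property on each stratum.} With this boundary generic $v$ now fixed, we must perturb $f_0$ to $f$ so that $f|_{\d_jX(v)}$ is Morse for every $0\le j\le n$, while preserving the gradient-like property with $v$. Working from the top stratum downward, for each $j$ we appeal to the Thom transversality style reasoning of Lemma~\ref{lem2.1}: take $V=\d_jX(v)$ and let $\Psi$ encode the already-arranged Morse data on the higher-codimension strata, so that Morseness of $f|_{\d_jX(v)}$ is the transversality of the section $d(f|_{\d_jX(v)})$ to the zero section of $T^\ast(\d_jX(v))$, an open dense condition among small perturbations of $f$. Perturbations can be chosen supported in an arbitrarily thin collar neighborhood of $\d X$, again disjoint from $Z_0$; extend each such perturbation trivially into $X$ using a cut-off function. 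Summing these finitely many localized perturbations yields $f$ with the desired Morse restrictions.

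\textbf{Main obstacle.} The delicate point is the compatibility of Stage~2 with the gradient-like condition. The critical-point requirement of Definition~\ref{def2.2} (existence of a metric $g$ near $Z$ with $v=\nabla_g f$) is what could fail under arbitrary perturbations of $f$, so the key is to localize all Morse-type perturbations of $f$ strictly away from $Z$, where the open inequality $df(v)>0$ alone controls the gradient-like property. This separation works because $Z$ lies in the interior of $X$, while every Morseness condition lives on a stratum of $\d X$; the principal technical step is the construction of a partition of unity and extension operators that realize such a clean separation while keeping the restriction maps $C^\infty(X,\R)\to C^\infty(\d_jX(v),\R)$ open-dense-onto. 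Once this separation is in place, openness from the first part and density from Stages~1--2 combine to prove the theorem.
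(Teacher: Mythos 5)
Your proposal follows essentially the same route as the paper's own self-contained argument: first perturb $v$ to be boundary generic while preserving $df(v)>0$ on a compact neighborhood of $\d X$ where $v\neq 0$ (an open cone condition), then perturb $f$ stratum by stratum, keeping all Morse-type perturbations of $f$ away from the zero set $Z$ so that the gradient-like relation is controlled by the open inequality alone. Your ``Main obstacle'' paragraph correctly isolates the key point. Two remarks on where you diverge from or fall short of the paper's treatment. First, for openness the paper does not merely assert that Morseness of $f|_{\d_jX(v)}$ is open in the pair $(f,v)$ even though the stratum moves with $v$; it constructs (in the companion paper) a map $\Psi^\d(v):\d_1X\to\R^n$ realizing the whole stratification as the preimage of a fixed flag, with transversality to the flag equivalent to boundary genericity, and then invokes Lemma~\ref{lem2.1} verbatim with $V=\d_1X$, $Y=\R^n$, $Y_j=F_j$. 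Your direct assertion is plausible but is exactly the point that machinery is built to justify; your Stage~2 use of Lemma~\ref{lem2.1} with ``$\Psi$ encoding the already-arranged Morse data'' does not match the lemma's actual hypotheses.

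Second, there is a concrete inconsistency in your density argument: Definition~\ref{def2.4} requires $f|_{\d_jX}$ to be Morse for \emph{all} $0\le j\le n$, including $j=0$, i.e.\ $f$ must be a Morse function on $X$ itself. You claim to handle every $j$ while insisting that all perturbations of $f$ be supported in a thin collar of $\d X$ disjoint from $Z_0$; no such perturbation can repair a degenerate interior critical point of $f_0$. The paper disposes of this with a preliminary global perturbation making $f$ Morse on $X$ with $v$ still gradient-like, \emph{before} localizing everything else to the collar $K$. Relatedly, the paper's inductive step contains a useful observation you omit: since $\tilde v$ is tangent to $\d_jX(\tilde v)$ along $\d_{j+1}X(\tilde v)$ and $d\tilde f(\tilde v)>0$ there, the restriction $\tilde f|_{\d_jX(\tilde v)}$ automatically has no critical points near $\d_{j+1}X(\tilde v)$, so the $j$-th perturbation can be supported on a compact set disjoint from the deeper strata; this gives a clean non-interference between stages rather than relying only on smallness.
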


\begin{proof}  By Theorem \ref{th2.1}, the boundary generic fields $v$ form an open and dense set in the space of all fields.
\smallskip
 
Let $\mathsf F^n$ be a complete flag in $\R^n$, formed by subspaces $F_j$ of codimension $j$. In the proof of Theorem 3.4 \cite{K3}, for every field $v$,  we will construct a smooth map $\Psi^\d(v): \d_1X \to \R^n$ such that $\d_jX(v)  = \Psi^\d(v)^{-1}(F_j)$. Moreover, $\Psi^\d(v)$ is transversal to each $F_j$, if and only if,  $v$ is a boundary generic field. The construction of the map $\Psi^\d(v)$ utilizes high order Lie derivatives $\{\mathcal L_v^j\}_{0 \leq j \leq n}$ of an auxiliary function $z: X \to \R$ as in Lemma 3.1 \cite{K3}.

Now the property of boundary generic Morse pairs $(f, v)$ to be open and dense in the space of all pairs follows from Lemma \ref{lem2.1}: just let $V = \d_1X$, $Y = \R^n$, $Y_j = F_j$, and $\Psi = \Psi^\d(v)$ in that lemma. 
\bigskip

For the reader convenience, let us sketch now an alternative argument that establishes just the density of boundary generic Morse pairs $(f, v)$ in the space of all pairs.  It does not rely on the construction of the map $\Psi^\d(v)$ from \cite{K3}. 

We start with  a pair $(f, v)$ where $v|_{\d X} \neq 0$ and $df(v) > 0$ at the points of the set where $v \neq 0$.  By a small perturbation of $f$, we can assume the $f$ is a Morse function on $X$ and $v$ its gradient-like field. 

Let $K \supset \d X$ be a compact regular neighborhood of $\d X$ in $X$ so small that $v_K \neq 0$.  By Theorem \ref{th2.1}, we can perturb  $v$  to a new field $\tilde v$ so that  $\tilde v$ is boundary generic in the sense of Definition \ref{def2.1} and still $\tilde v|_{K} \neq 0$. 

For a given $f$, the condition $df(u)|_K > 0$ defines an open cone in the space of all  fields $u$, subject to the constraint $u|_K \neq 0$.  Therefore $\tilde v$ can be chosen both boundary generic and gradient-like for $f|_K$. When $\tilde v|_K$ is fixed, so are the stratifications $\{\d_j^+X(\tilde v) \subset \d_jX(\tilde v)\}_j$.  

Next, with $\tilde v|_K$ being fixed, we perturb $f$ again to a new function $\tilde f$ so that $d\tilde f(\tilde v)|_K > 0$ and $\{f|_{\d_jX(\tilde v)}\}$ are Morse function for all $j$. The perturbation will be supported in the compact $K$. We start constructing $\tilde f$ inductively  first from adjusting it on the 1-manifold $\d_nX(\tilde v)$ and then moving sequentially to the strata $\d_jX$ with lower indices $j$. We pick  each perturbation $\tilde f$ so small that the open condition  $d\tilde f(\tilde v)|_K > 0$ is not violated. The existence of the desired $j$-th perturbation is based on the fact that Morse functions on a compact manifold $Y$ (in this case, on $\d_jX(\tilde v)$) form an  open and dense subset in $C^\infty(Y)$, the space of all smooth functions on $Y$, being equipped with the Whitney topology. Note that since $\tilde v$ is tangent to $\d_jX(\tilde v)$ along $\d_{j +1}X(\tilde v)$ and  $d\tilde f(\tilde v)|_{\d_{j +1}X(\tilde v)} > 0$, the restriction $\tilde f|_{\d_jX(\tilde v)}$ has no critical points in the vicinity of $\d_{j +1}X(\tilde v)$. Thus we need to perturb $\tilde f|_{\d_jX(\tilde v)}$ only on a compact subset $Q_j \subset \d_jX(\tilde v)$ which has an empty intersection with $\d_{j +1}X(\tilde v)$. This perturbation extends smoothly from $Q_j$ to $X$. Eventually, we reach the upper stratum $\d_0X := X$, thus constructing a boundary generic approximation of the given pair $(f, v)$. 

All the changes $(\tilde f, \tilde v)$ of $(f, v)$, but the first one, we have introduced so far are supported in $K$, where $\tilde v \neq 0$ and $d\tilde f(\tilde v) > 0$. This proves that the boundary generic pairs form a \emph{dense} set in the space of all pairs $(f, v)$, where $v$ being a $f$-gradient-like field, subject to the constraints: $v|_{\d X} \neq 0$, and $f: X \to \R$ being a Morse function.
\end{proof}

\begin{figure}[ht]
\centerline{\includegraphics[height=1.8in,width=3in]{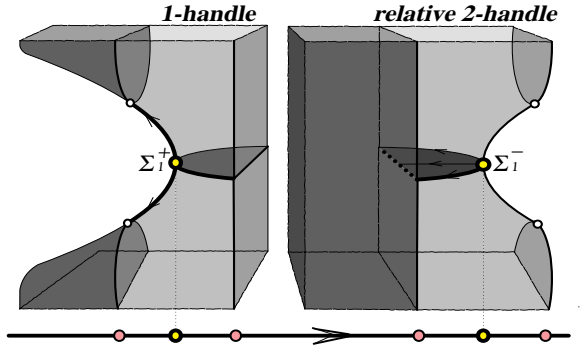}} 
\bigskip
\caption{Positive (the left diagram) and negative (the right diagram) singularities on the boundary of a solid.}
\end{figure}

For a given Morse pair $(f, v)$, we denote by $\Sigma_j \subset \d_jX(v)$ the set of critical points  of the function $f|: \d_jX \to \R$. For a boundary generic Morse pair $(f, v)$, the finite critical set $\Sigma_j$ is divided into two complementary sets: the set $\Sigma_j^+ \subset \d_j^+X$ of positive critical points and the set $\Sigma_j^- \subset \d_j^-X$ of negative ones (see Fig. 3). \smallskip

\noindent {\bf Remark 2.2.} Note that when $\d_j^+X \neq \emptyset$, it may happen that  $\Sigma_j^+  = \emptyset$. However, if  a component $\d_j^+X_\a$ of $\d_j^+X$ 
is a \emph{closed} manifold, then $f: \d_j^+X_\a \to \R$ must have local extrema, in which case $\Sigma_j^+ \neq \emptyset$. \hfill\qed
\smallskip

Consider a generic  field $v$ and a Riemannian metric $g$ on $X$. We denote by $v_j$ the orthogonal projection of the field $v$ on the tangent space $T(\d_jX)$.  
Note that if $v$ is a gradient field  for a function $f: X \to \R$ in metric $g$, then $v_j$ is automatically a gradient field for the restrictions $f|_{\d_jX}$ and $g|_{\d_jX}$.   
\smallskip

Take a smooth vector field $v$ on a compact $(m+1)$-manifold $Y$ with isolated singularities $\{y_\star \in \Sigma(v) \subset \mathsf{int}(Y)\}$.  We denote 
by $\mathsf{ind}_{y_\star}(v)$ the localized index of $v$ at its typical singular point $y_\star$. In a local chart, $\mathsf{ind}_{y_\star}(v)$ is defined as the degree of a map $G_v: S^m_{y_\star} \to S^m$ from a small $y_\star$-centered $m$-sphere  to the unit $m$-sphere. The map takes each point $a \in S_{y_\star}$ to the point $v(a)/\|v(a)\| \in S^m$.

We define the ``global" index $\mathsf{Ind}(v)$ as the sum $\sum_{y_\star \in \Sigma(v)} \mathsf{ind}_{y_\star}(v)$. 

For a generic field $v$ and a Riemannian metric $g$ on $X$, we form the fields $\{v_j\}$  on $\{\d_jX(v)\}$ and define the global index of $v_j$ by the formula:  $$\mathsf{Ind}^+(v_j) := \sum_{\{x_\star \in \Sigma_j^+\}} \; \mathsf{ind}_{x_\star}(v_j).$$ 
\smallskip

Let us revisit  the beautiful Morse formulas \cite{Mo}:

\begin{thm}[{\bf The Morse Law of Vector Fields}]\label{th2.3} \hfill\break  
For a boundary generic vector field $v$ and a Riemannian metric on a $(n +1)$-manifold $X$, such that the singularities of the fields $v_j$ are isolated for all $j \in [0, n+1]$,  the following two equivalent sets of formulas hold:
\begin{eqnarray}
\chi(\d_j^+X) =  \mathsf{Ind}^+(v_j) + \mathsf{Ind}^+(v_{j+1}) \nonumber
\end{eqnarray} 

\begin{eqnarray}\label{eq2.2}
\mathsf{Ind}^+(v_j)  =  \sum_{k = j}^{n +1} \; (-1)^k \chi(\d_k^+X)
\end{eqnarray} 
, where $\chi(\sim)$ stands for the Euler number of the appropriate space\footnote{By definition, $\mathsf{Ind}^+(v_{n +1}) = |\Sigma_{n +1}^+|$ and $\mathsf{Ind}^+(v_{n + 2}) = 0$.}. \hfill\qed
\end{thm}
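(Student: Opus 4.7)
The plan is to prove the first identity $\chi(\partial_j^+X) = \mathsf{Ind}^+(v_j) + \mathsf{Ind}^+(v_{j+1})$ directly for each $j$, and then to deduce the alternating-sum formula by reverse induction on $j$, starting from the base case $j = n+1$ where $\mathsf{Ind}^+(v_{n+2}) := 0$. The first identity is an instance of Morse's version of the Poincar\'e--Hopf theorem for vector fields on compact manifolds with boundary, applied stratum by stratum.

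Fix $j \in [0, n+1]$ and view $\partial_j^+X$ as a compact manifold whose topological boundary is $\partial_{j+1}X = \partial(\partial_j^+X) = \partial(\partial_j^-X)$. I would verify three points. First, the interior zeros of $v_j$ on $\partial_j^+X$ are, by definition, precisely the set $\Sigma_j^+$, and the standing hypothesis that $v_j$ has isolated singularities makes $\mathsf{Ind}^+(v_j)$ well-defined. Second, along $\partial_{j+1}X$ the boundary-generic hypothesis forces $v_j$ to be nowhere zero; its orthogonal decomposition in $T(\partial_jX)|_{\partial_{j+1}X}$ splits $v_j$ into the tangential component $v_{j+1}$ and a normal component whose sign is, by construction, positive on $\partial_{j+1}^+X$ and negative on $\partial_{j+1}^-X$. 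Third, the classical Morse/Poincar\'e--Hopf identity for a smooth compact manifold $M$ with boundary and a vector field $u$ with isolated interior zeros and nowhere-zero boundary trace reads
\[
\chi(M) \;=\; \mathsf{Ind}(u) \;+\; \mathsf{Ind}\bigl(u_\partial|_{\partial^+M}\bigr),
\]
the boundary correction counting only the zeros of the tangential part $u_\partial$ at which $u$ itself points inward into $M$. Applying this to $M = \partial_j^+X$ and $u = v_j$, the interior term is $\mathsf{Ind}^+(v_j)$ while the boundary term is $\mathsf{Ind}^+(v_{j+1})$, giving the first formula.

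For the second formula I would rewrite the just-established identity as the recurrence $\mathsf{Ind}^+(v_j) = \chi(\partial_j^+X) - \mathsf{Ind}^+(v_{j+1})$, with the convention $\mathsf{Ind}^+(v_{n+2}) = 0$. Descending induction on $j$ yields the telescoping alternating sum
\[
\mathsf{Ind}^+(v_j) \;=\; \sum_{k=j}^{n+1} (-1)^{k-j}\, \chi(\partial_k^+X),
\]
which matches the stated formula up to the global sign $(-1)^j$; either convention appears after pulling $(-1)^j$ inside.

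The main obstacle I anticipate is the corner structure on $\partial_j^+X$: its boundary $\partial_{j+1}X$ is itself stratified, and the locus $\partial_{j+2}X$ where $v_j$ is tangent to $\partial_{j+1}X$ introduces corners where two sheets of boundary meet. To invoke the Poincar\'e--Hopf identity cleanly one should either smooth these corners and check that neither $\chi(\partial_j^+X)$ nor the positive-index counts $\mathsf{Ind}^+(v_j)$, $\mathsf{Ind}^+(v_{j+1})$ are affected, or invoke a version of Poincar\'e--Hopf directly adapted to stratified boundaries. The most economical route is a small radial inward perturbation of $v_j$ in a tubular neighborhood of $\partial_{j+2}X$ that breaks the double tangency and reduces the computation to the smooth-boundary case; the boundary-generic hypothesis on $v$ ensures that such a perturbation moves no zero in or out of $\partial_j^+X$ and thus preserves every index that appears.
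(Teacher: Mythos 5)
The paper offers no proof of Theorem \ref{th2.3} at all---it is quoted from Morse's 1929 paper and closed with an end-of-proof mark---so there is no in-text argument to compare against; your proof is the standard derivation and is essentially correct. Do note that the engine of your argument, the two-term identity $\chi(M)=\mathsf{Ind}(u)+\mathsf{Ind}\bigl(u_\partial|_{\partial^+M}\bigr)$, is itself the $j=0$ instance of the theorem, so what you have really shown is that the whole family of identities follows from that single classical fact applied to the pairs $(\partial_j^+X,\,v_j)$; the substantive content is still imported from Morse/Gottlieb, which is fair since the paper does the same. Your verification that the hypotheses propagate is right: $v_j=v\neq 0$ along $\partial_{j+1}X$, the zeros of $v_j$ in the interior of $\partial_j^+X$ are exactly $\Sigma_j^+$, the tangential part of $v_j$ along $\partial_{j+1}X$ is $v_{j+1}$, and $\partial_{j+1}^+X$ is by definition the inward-pointing locus. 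Your anticipated obstacle, however, is vacuous: each $\partial_j^+X$ is a smooth compact manifold whose boundary is the single \emph{closed} manifold $\partial_{j+1}X$ (it is cut out of $\partial_jX$ by a function having $0$ as a regular value), so there are no corners to smooth; the only genuine genericity requirement is that $v_{j+1}$ be nonvanishing along $\partial(\partial_{j+1}^+X)=\partial_{j+2}X$, which boundary genericity already supplies, and your proposed perturbation is unnecessary.

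One point you should not wave away: the sign. Your telescoping yields $\mathsf{Ind}^+(v_j)=\sum_{k=j}^{n+1}(-1)^{k-j}\chi(\partial_k^+X)$, which differs from the printed formula \ref{eq2.2} by the overall factor $(-1)^j$---a genuine difference in value when $j$ is odd, not a choice of convention. Your version is the one actually equivalent to the first family of identities: at $j=n+1$ the first identity together with the footnote gives $\mathsf{Ind}^+(v_{n+1})=|\Sigma_{n+1}^+|=\chi(\partial_{n+1}^+X)\geq 0$, whereas \ref{eq2.2} as printed would give $(-1)^{n+1}\chi(\partial_{n+1}^+X)$. So your derivation is correct, and the displayed formula should carry the prefactor $(-1)^j$ (all of the paper's subsequent applications take $j=0$, where the two readings coincide). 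State this as a correction rather than as ``either convention.''
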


For vector fields with symmetry, the Morse Law of Vector Fields has an \emph{equivariant} generalization \cite{K1}.  Here is its brief description: for a compact Lie group $G$ acting on a compact manifold $X$, equipped with a $G$-equivariant field $v$, we prove that the invariants $\{\chi(\d_k^+ X)\}$ can be interpreted as taking values in \emph{the Burnside ring} $\mathcal B(G)$ of the group $G$ (see \cite{D} for the definitions). With this interpretation in place, the appearance of formula \ref{eq2.2} 
does not change.
\smallskip 

Morse formula \ref{eq2.2}  
has an instant, but significant implication:

\begin{cor}\label{cor2.1} Let  $N$ be a smooth neighborhood of the zero set of a vector field $v$ on a compact $(n + 1)$-manifold $X$. Assume that $v$ is boundary generic  with respect to both boundaries, $\d X$ and $\d N$. Then 
$$\mathsf{Ind}(v) = \sum_{j = 0}^{n +1}(-1)^j \chi(\d_j^+N) = \sum_{j = 0}^{n +1} (-1)^j \chi(\d_j^+X).$$ \hfill\qed
\end{cor}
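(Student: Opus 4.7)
The plan is to derive both equalities by a single application of the Morse formula \eqref{eq2.2} at the bottom index $j = 0$, first to the pair $(X, v)$ and then to the pair $(N, v|_N)$, and to reconcile the two by noting that the $v$-zero set is common to $X$ and $N$.

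First, I would unwind what \eqref{eq2.2} says at $j = 0$. By convention $\partial_0 X := X$, so $\partial_0^+ X = X$, and the orthogonal projection $v_0$ of $v$ onto $T(\partial_0 X) = T(X)$ is simply $v$ itself. Its singular set in $\partial_0^+ X$ is therefore the entire zero locus $\Sigma(v) \subset \mathsf{int}(X)$, and the positivity constraint in the definition of $\Sigma_0^+$ is vacuous. Consequently $\mathsf{Ind}^+(v_0) = \sum_{x_\star \in \Sigma(v)} \mathsf{ind}_{x_\star}(v) = \mathsf{Ind}(v)$. Substituting into \eqref{eq2.2} with $j = 0$ yields
\begin{equation*}
\mathsf{Ind}(v) \;=\; \sum_{k = 0}^{n+1} (-1)^k \, \chi(\partial_k^+ X),
\end{equation*}
which is the second of the claimed equalities.

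Next I would repeat the argument verbatim with $(X, v)$ replaced by $(N, v|_N)$. The hypothesis that $v$ is boundary generic with respect to $\partial N$ guarantees that $v|_N$ satisfies the standing assumption of Theorem \ref{th2.3} on $N$, so \eqref{eq2.2} applies and gives
\begin{equation*}
\mathsf{Ind}(v|_N) \;=\; \sum_{k = 0}^{n+1} (-1)^k \, \chi(\partial_k^+ N).
\end{equation*}
Because $N$ was chosen to be a neighborhood of the zero set of $v$, every singularity of $v$ in $X$ already lies in $\mathsf{int}(N)$, and $v$ has no extra zeros inside $N$. Therefore the localized indices on $N$ and on $X$ are taken over the same set with the same local models, so $\mathsf{Ind}(v|_N) = \mathsf{Ind}(v)$, and the two sums over $\partial_k^+ N$ and $\partial_k^+ X$ must coincide.

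There is essentially no obstacle beyond checking that the bookkeeping at $j = 0$ is consistent with the conventions established just before Theorem \ref{th2.3} (in particular that $v_0 = v$ and that the ``positivity'' of critical points is automatic on the top stratum). Once that is in place, the corollary is a direct specialization of \eqref{eq2.2} applied twice.
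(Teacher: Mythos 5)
Your proof is correct and matches the paper's intent exactly: the paper offers no written proof, calling the corollary an ``instant'' implication of the Morse formula \ref{eq2.2}, and the intended argument is precisely your double application of that formula at $j=0$ (where $v_0 = v$ and $\mathsf{Ind}^+(v_0)=\mathsf{Ind}(v)$), once to $(X,v)$ and once to $(N, v|_N)$, reconciled by the fact that all zeros of $v$ lie in $\mathsf{int}(N)$.
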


\noindent {\bf Remark 2.3.} Therefore, the numbers $$\sum_{j = 0}^{n +1}(-1)^j \chi(\d_j^+N) \; \, \text{and} \;\,  \sum_{j = 0}^{n +1} (-1)^j \chi(\d_j^+X)$$  can serve as ``more and less localized"  \emph{definitions} of the index invariant $\mathsf{Ind}(v)$. \hfill\qed

An interesting discussion, connected to Theorem \ref{th2.3},  its topological and geometrical implications, can be found in the paper of Gotlieb \cite{Go}.  The ``Topological Gauss-Bonnet Theorem" below is a sample of these results.

\begin{thm}[{\bf Gotlieb}]\label{th2.4} 
Let $X$ be a compact smooth $(n+1)$-dimensional manifold and  $\Phi: X \to \R^{n+1}$ a smooth map which is a \emph{immersion} in the vicinity of  the boundary $\d_1X$. Let $g$ be a Riemannian metric on $X$ which, in the vicinity of $\d X$, is the pull-back $\Phi^\ast(g_E)$ of the Euclidean metric on $\R^{n+1}$.
Consider  a generic linear function $l: \R^{n+1} \to \R$ such that the composite function $f := l\circ\Phi$ has only isolated singularities in the interior of $X$. Let $v := \nabla_g f$ be the gradient  field of $f$\footnote{Thus $v$ is a \emph{transfer} by $\Phi$ of the constant field $\nabla_{g_E}l$.}. Assume that $(f, v)$ is boundary generic.

Then the degree of the Gauss map $$G: \d_1X \to \Phi(\d_1X) \to S^n$$ can be calculated either by integrating over $\d_1X$ the normal curvature $K_\nu$ (in the metric $g$) of the hypersurface $\d_1X \subset X$, or in terms of the  $v$-induced stratification $$\d_1^+X(v) \supset \d_2^+X(v) \supset \dots \supset \d_{n+1}^+X(v)$$ by the formula



\begin{eqnarray}\label{eq2.3}         
\deg(G) = \frac{1}{\textup{vol}(S^n)} \int_{\d_1X} K_\nu\, d\mu_g  = \chi(X) - \mathsf{Ind}(v) \nonumber \\ 
= -\sum_{j = 1}^{n+1}\; (-1)^j \chi(\d_j^+X(v)).
\end{eqnarray}
\footnote{Recall that $\textup{vol}(S^n) =  2\pi^{\frac{n+1}{2}} /\, \Gamma(\frac{n+1}{2})$.}  
\hfill \qed
\end{thm}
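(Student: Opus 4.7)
\medskip

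\noindent\textbf{Proof plan.}  The statement bundles three equalities, and I would tackle them in a different order than they appear.  The third equality (the alternating sum) is essentially a restatement of Corollary \ref{cor2.1}, and the first equality (integral of the normal curvature) is the classical pullback formula for the Gauss map.  The substantive content is the middle equality $\deg(G) = \chi(X) - \mathsf{Ind}(v)$, which I would obtain by combining Morse's Law of Vector Fields (Theorem \ref{th2.3}) with a Hessian-versus-shape-operator calculation on $\d_1 X$.

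For the easy pieces:  applying Corollary \ref{cor2.1} gives $\mathsf{Ind}(v) = \sum_{j=0}^{n+1}(-1)^j\chi(\d_j^+ X)$, and since $\d_0^+ X = X$ this rearranges to $\chi(X)-\mathsf{Ind}(v) = -\sum_{j=1}^{n+1}(-1)^j \chi(\d_j^+X(v))$, giving the third equality.  For the first equality, recall that for an immersed oriented hypersurface in $\R^{n+1}$ the Gauss map $G$ satisfies $G^\ast(d\sigma_{S^n}) = K_\nu\, d\mu_g$, where $K_\nu$ is the determinant of the shape operator (via the Weingarten identity $dG_x = -S_x$); integration then gives $\deg(G)\cdot \textup{vol}(S^n) = \int_{\d_1X} K_\nu\, d\mu_g$.

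For the middle equality, I would invoke Theorem \ref{th2.3} at the level $j=0$ (with the convention $\d_0^+X = X$ and $\mathsf{Ind}^+(v_0) = \mathsf{Ind}(v)$), which yields
\begin{equation*}
\chi(X) = \mathsf{Ind}(v) + \mathsf{Ind}^+(v_1),
\end{equation*}
so it suffices to prove $\deg(G) = \mathsf{Ind}^+(v_1)$.  Since $v = \nabla_g f$ and near $\d X$ the metric $g$ is $\Phi^\ast(g_E)$, the field $v$ is there the $\Phi$-transfer of the constant Euclidean field $\nabla_{g_E} l$.  A point $x \in \d X$ lies in $\Sigma_1 := \{df|_{\d X}=0\}$ precisely when $\nabla l$ is normal to $\Phi(\d X)$ at $\Phi(x)$, i.e.\ when $G(x) = \pm \hat l$ with $\hat l := \nabla l/\|\nabla l\|$.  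Of the two cases, $G(x) = -\hat l$ forces the outward component of $v(x)$ to be negative (so $v$ points inward, i.e.\ $x \in \d_1^+X$), while $G(x) = +\hat l$ puts $x$ in $\d_1^-X$.  Thus $\Sigma_1^+ = G^{-1}(-\hat l)$, and the boundary genericity of $(f,v)$ guarantees that $-\hat l$ is a regular value of $G$.

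It remains to match signs at each preimage.  Using the Hessian--second fundamental form identity
\begin{equation*}
\mathrm{Hess}(f|_{\d X})_x(X,Y) = \langle \nabla l,\, \mathrm{II}_x(X,Y)\rangle = -\|\nabla l\|\cdot B_x(X,Y),
\end{equation*}
where $B_x$ is the scalar second fundamental form relative to the outward normal $\nu_x = -\hat l$, I would read off that the Morse index $\mu(x)$ of $f|_{\d X}$ equals the number of positive eigenvalues of $B_x$, so that $\mathsf{ind}_x(v_1) = (-1)^{\mu(x)} = (-1)^n \,\mathrm{sign}(\det B_x)$.  Combining with the Weingarten formula $dG_x = -S_x$ gives $\mathrm{sign}(\det dG_x) = (-1)^n\,\mathrm{sign}(\det S_x)$, which coincides with $\mathsf{ind}_x(v_1)$.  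Summing over $\Sigma_1^+ = G^{-1}(-\hat l)$ produces $\mathsf{Ind}^+(v_1) = \deg(G)$.

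The main obstacle is the sign bookkeeping in the last step: the comparison of orientations on $T_x\d X$, on $T_{-\hat l}S^n$, and in $\R^{n+1}$ must be made coherent so that the identifications $T\Phi(\d X) \cong \nu^\perp \cong T_{\nu}S^n$ and the boundary orientation on $\d X$ inherited from $X$ combine without producing an extraneous global sign.  I would sanity-check by running the round sphere case $X = B^{n+1}$, $\Phi = \mathrm{id}$, $l$ = linear height, where $\mathsf{Ind}(v)=0$, $\mathsf{Ind}^+(v_1)=1$, $\deg(G) = 1 = \chi(X)$, to fix the conventions.  All remaining ingredients (pullback of volume forms, Weingarten, Poincar\'e--Hopf indices for gradient fields) are standard.
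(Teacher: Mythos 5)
The paper does not actually prove Theorem \ref{th2.4}: it is quoted from Gottlieb's article \cite{Go} and closed with a \emph{qed} symbol, so there is no internal argument to compare yours against. Judged on its own merits, your reconstruction is sound and follows what is essentially the Gottlieb/Morse route. The reduction of the third equality to the Morse formula \eqref{eq2.2} at level $j=0$ is exactly right, the first equality is indeed the classical pullback identity for the Gauss map, and the key step $\deg(G)=\mathsf{Ind}^+(v_1)$ via $\Sigma_1^+=G^{-1}(-\hat l)$ is correct: since $g=\Phi^\ast(g_E)$ near $\d_1X$, the field $v$ is $\Phi$-related to the constant field $\nabla l$, so the critical points of $f|_{\d_1X}$ are precisely the points where the outward normal is $\pm\hat l$, with the inward-pointing ones giving $G(x)=-\hat l$; boundary genericity makes $-\hat l$ a regular value; and the Hessian-versus-second-fundamental-form identity matches the Poincar\'e--Hopf sign $(-1)^{\mu(x)}$ of $v_1$ with the local degree of $G$. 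The only loose end is the one you name yourself: the Weingarten sign convention ($dG=\pm S$) must be used consistently in \emph{both} places it enters --- the identity $G^\ast(d\sigma_{S^n})=K_\nu\,d\mu_g$ and the comparison $\mathrm{sign}(\det dG_x)=\mathsf{ind}_x(v_1)$ --- otherwise a stray factor $(-1)^n$ appears in one equality but not the other; your round-ball sanity check ($\deg G=1=\chi(B^{n+1})-0$, with $\Sigma_1^+$ the single minimum at the south pole) does pin the conventions down correctly, and with that fixed the argument is complete. One further point worth making explicit: the theorem only assumes \emph{isolated} interior singularities of $f$, not Morse ones, but Theorem \ref{th2.3} as stated covers isolated zeros of $v_0=v$ via local degrees, so your use of $\chi(X)=\mathsf{Ind}(v)+\mathsf{Ind}^+(v_1)$ is legitimate without upgrading the interior critical points.
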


\noindent {\bf Example 2.1.} Let $X$ be an orientable surface of genus $g$ with a single boundary component. Let $\Phi: X \to \R^2$ be an immersion, and let $l: \R^2 \to \R$, $f: X \to \R$ and $v := \nabla f$ be as in Theorem \ref{th2.4}. 

Since $\Phi$ is an immersion everywhere (and not only in the vicinity of $\d X$ as Theorem \ref{th2.4} presumes), we get that $v \neq 0$. Thus $\mathsf{Ind}(v) = 0$. Then Theorem \ref{th2.4}  claims that the degree of the Gauss map $G: \d X \to \Phi(\d X) \to S^1$ is equal to $$\chi(X) = 1 - 2g  = \chi(\d_1^+X(v)) -  \chi(\d_2^+X(v))$$ $$= \frac{1}{2}\big(\#(\d_2^-X(v)) - \#(\d_2^+X(v))\big).$$

Thus, the topological Gauss-Bonnet theorem, for immersions $\Phi: X \to \R^2$, reduces to the equation $$\#(\d_2^+X(v)) - \#(\d_2^-X(v)) = 4g - 2.$$

So the number of $v$-trajectories $\g$ in $X$ that are tangent to $\d X$, but are not singletons (they correspond  to points of $\d_2^+X(v)$), as a function of genus $g$,  grows at least as fast as $4g - 2$.
\smallskip

On the other hand, by the Whitney index formula \cite{W1}, the degree of $G: \d X \to S^1$ can be also calculated as $\mu + N^+ - N^-$, where $N^\pm$ denotes the number of positive/negative self-intersections of the curve $\Phi(\d X) \subset \R^2$, and $\mu = \pm 1$.

By a theorem of L. Guth \cite{Gu}, the total number of self-intersections $N^+ + N^- \geq 2g + 2$. Moreover, this lower bound is realized by an immersion $\Phi: X \to \R^2$ ! Therefore, for any immersion $\Phi: X \to \R^2$, the total number of self-intersections of the curve $\Phi(\d X)$ can be estimated in terms of the boundary-tangent $v$-trajectories:
$$N^+ + N^- \geq  3 - \deg(G) = \frac{1}{2}\big(\#(\d_2^+X(v)) - \#(\d_2^-X(v))\big) + 3$$
, and for some special immersion $\Phi$, we get
$$N^+ + N^- =  \frac{1}{2}\big(\#(\d_2^+X(v)) - \#(\d_2^-X(v))\big) + 3.$$
\hfill\qed

\begin{cor}\label{cor2.2} Let $X$ be a compact $(n+1)$-manifold with boundary, which is  properly contained in an open $(n+1)$-manifold $\hat X$.  Let  $\Phi: \hat X \to \R^{n+1}$ be a smooth map which is a \emph{immersion} in the vicinity of  the boundary $\d_1X$. Let $g$ be a Riemannian metric on $\hat X$ which, in the vicinity of $\d_1X$, is the pull-back $\Phi^\ast(g_E)$ of the Euclidean metric on $\R^{n+1}$.

Let  $l: \R^{n+1} \to \R$ be a  linear function, and $f := l\circ\Phi$ its composition with the map $\Phi$. Form the gradient field $v := \nabla_g f$ in $\hat X$.  Assume that the pair $(f, v)$ is boundary generic in the sense of Definition \ref{def2.4}. 

For each $j> 0$, consider a $\e$-small tubular neighborhood $U_j$ of the manifold $\d_jX(v)$ in $\hat X$. Then $\Phi: U_j \to \R^{n+1}$ is an immersion. This setting gives rise to the Gauss map $G_j:  \d U_j \to S^n$,  defined by the formula $G_j(x) = D_x\Phi(\nu_x)/ \| D_x\Phi(\nu_x) \|$, where $x \in \d U_j$ and $\nu_x$ is the unit vector inward normal to $\d U_j$ at $x$. 
\smallskip

Then the degree of the Gauss map $G_j$ can be calculated either by integrating (with respect to the $n$-measure $\mu_g$) over $\d U_j$ the normal curvature $K_\nu$ of the hypersurface $\d U_j \subset \hat X$, or in terms of the  $v$-induced stratum $\d_jX(v)$: 

\makeatletter
\def\tagform@#1{\maketag@@@{(\ignorespaces\theequation\unskip\@@italiccorr)}}
\makeatother

\begin{eqnarray}\label{eq2.4}          
\deg(G_j) = \frac{1}{\textup{vol}(S^n)} \int_{\d U_j} K_\nu\, d\mu_g = \chi(\d_jX(v))  
\end{eqnarray} \hfill \qed
\end{cor}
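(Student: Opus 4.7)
The plan is to apply Theorem \ref{th2.4} directly to the compact $(n+1)$-manifold $U_j$ (with boundary $\d U_j$) in place of $X$. The hypothesis that $\Phi|_{U_j}$ is an immersion is built into the statement for $\e$ small; by further shrinking $\e$ we may assume $U_j$ lies in the region where $g = \Phi^\ast(g_E)$, so the metric assumption of Theorem \ref{th2.4} is met near $\d U_j$. The function $f = l \circ \Phi$ and its $g$-gradient $v$ are already defined on $\hat X$, and for a generic choice of tubular radius the restricted pair $(f|_{U_j}, v|_{U_j})$ will be boundary generic with respect to $\d U_j$ in the sense of Definition \ref{def2.4}. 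Theorem \ref{th2.4} then yields
\begin{equation*}
\deg(G_j) \;=\; \frac{1}{\textup{vol}(S^n)} \int_{\d U_j} K_\nu\, d\mu_g \;=\; \chi(U_j) - \mathsf{Ind}(v|_{U_j}).
\end{equation*}

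The right-hand side is easy to evaluate. Since $U_j$ is a tubular neighborhood of the smooth submanifold $\d_j X(v) \subset \hat X$ (of codimension $j$), it deformation retracts onto $\d_j X(v)$, whence $\chi(U_j) = \chi(\d_j X(v))$. Moreover, the zeros of $v$ coincide with the critical points of the Morse function $f$, which are isolated and, by virtue of $v|_{\d X} \neq 0$, lie in the interior of $X$, hence away from $\d_j X(v) \subset \d X$. For $\e$ sufficiently small, $U_j$ is therefore disjoint from the zero set of $v$, and so $\mathsf{Ind}(v|_{U_j}) = 0$. Combining these two equalities yields the claimed identity $\deg(G_j) = \chi(\d_j X(v))$.

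The main obstacle I foresee is not the core computation but a technical verification: one must ensure that the restricted pair $(f|_{U_j}, v|_{U_j})$ is genuinely boundary generic on its new boundary $\d U_j$. The Morse stratification of $\d U_j$ generated by $v|_{U_j}$ is unrelated to the ambient stratification $\{\d_k X(v)\}$, so this requires an independent transversality argument, most naturally a Sard-type argument varying the tubular-radius parameter $\e$, in the spirit of Theorem \ref{th2.1}. A minor secondary check is that the inward-normal convention used in defining $G_j$ in the statement matches whichever convention is implicit in the Gauss map of Theorem \ref{th2.4}; a mismatch would introduce an extraneous sign $(-1)^{n+1}$ which would have to be absorbed or corrected.
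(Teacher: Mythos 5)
Your proposal is correct and follows essentially the same route as the paper: apply Theorem \ref{th2.4} to $U_j$, observe that $\mathsf{Ind}(v|_{U_j})=0$ because $v\neq 0$ on $U_j$, and identify $\chi(U_j)$ with $\chi(\d_jX(v))$ via the retraction of the tubular neighborhood onto its core. The technical caveats you flag (boundary genericity of the restricted pair on $\d U_j$ and the normal-orientation convention) are left implicit in the paper's own argument as well.
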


\begin{proof} We will apply Theorem \ref{th2.4} to the field $v$ in  $U_j$ to conclude that 
$$\deg(G_j) = \frac{1}{\textup{vol}(S^n)} \int_{\d U_j} K_\nu\, d\mu_g  = \chi(U_j) - \mathsf{Ind}(v)$$
Since $v \neq 0$ in $U_j$, $\mathsf{Ind}(v) = 0$, and the last term of this equation reduces to $\chi(U_j) = \chi(\d_jX(v))$.
\end{proof}

\noindent {\bf Remark 2.4.}
Of course, for an odd-dimensional $\d_jX(v)$, the Euler number $\chi(\d_jX(v)) = 0$, and so is $\deg(G_j)$. When $\d_jX(v)$ is even-dimensional (i.e., $n+1 -j = 2l$), the integral in equation \ref{eq2.4} can be expressed in terms of \emph{intrinsic} Riemannian geometry of the manifold $\d_jX(v)$, namely, in terms of the \emph{Pfaffian} $Pf(\Omega)$. The Pfafian is a $2l$-differential form, whose construction utilizes the curvature tensor on the manifold (see \cite{MiS}). So, when $j = n +1 -2l$,  
$$\deg(G_j) = \frac{1}{\textup{vol}(S^n)} \int_{\d U_j} K_\nu\, d\mu_g =  (2\pi)^{-l} \int_{\d_jX(v)} Pf(\Omega)  =  \chi(\d_jX(v))$$
\hfill\qed

\smallskip 

Given a boundary generic field $v$ on $X$, we introduce a sequence of basic degree-type invariants $\{d_k(v)\}$ which are intimately linked, via the Morse formula \ref{eq2.2}, to the invariants $\{\chi(\d_j^+X(v))\}$. 

We use a Riemannian metric $g$ on $X$ to produce the orthogonal projection $v_j$ of the field $v$ on the tangent subspace $T(\d_jX(v)) \subset T(X)$. 

Let $\mathcal S(\d_{k-1}X)$ be the bundle of unit $(n+1 -k)$-spheres associated with the tangent bundle of the manifold $\d_{k-1}X$. We denote by $S(\d_{k-1}X)$ the restriction of the bundle $\mathcal S(\d_{k-1}X) \to \d_{k-1}X$ to the subspace $\d_kX \subset \d_{k-1}X$. 

For each $k$, consider two fields, the inward normal field $\nu_k$ to $\d_kX$ in $\d^+_{k-1}X$ and $v$, as \emph{sections} of the  sphere bundle $p_k: S(\d_{k-1}X) \to \d_kX$ (remember, $v \neq 0$ is tangent to $\d_{k-1}X$ along $\d_kX$ so that $v = v_{k-1}$ along $\d_kX$!). Assume that the sections $v$ and $\nu_k$ are transversal in the space $S(\d_{k-1}X)$. This transversality can be achieved by a perturbation of $\nu_k$ (equivalently, by a perturbation of the metric $g$), supported in the vicinity of the singularity locus $\Sigma_k^+$. Indeed, the intersections occur where the field $v_{k-1}$ is positively proportional to $\nu_k$, that is, where $v_k = 0$. The later locus is exactly the locus $\Sigma_k^+$. The perturbation that does not affect the stratification $\{\d_j^+X\}_j$. Assuming the transversality  of the intersection, the locus $v(\d_kX) \cap \nu_k(\d_kX) \subset S(\d_{k-1}X)$ is zero-dimensional. 

We define the integer $d_k(v) := v \circ \nu_k$ as the algebraic intersection number of two $(n+1-k)$-cycles, $v(\d_kX)$ and $\nu_k(\d_kX)$, in the ambient manifold $S(\d_{k-1}X)$ of dimension $2(n+1-k)$.
\smallskip 


\begin{lem}\label{lem2.2} For a boundary generic field $v$ on a Riemannian manifold $X$, the following formula holds: 
$$d_k(v) = \mathsf{Ind}^+(v_k) = \sum_{j = k}^{n +1} (-1)^j \chi(\d_j^+X).$$
\end{lem}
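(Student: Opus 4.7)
The second equality in the claim is a direct restatement of Morse's formula \eqref{eq2.2} in Theorem \ref{th2.3}, so the whole content lies in proving the first equality $d_k(v)=\mathsf{Ind}^+(v_k)$. My plan is to show that the algebraic intersection $v(\d_k X)\cap\nu_k(\d_k X)$ inside $S(\d_{k-1}X)$ is supported exactly on the set $\Sigma_k^+$, and that the local intersection multiplicity at each point $x_\star\in\Sigma_k^+$ equals the local index $\mathsf{ind}_{x_\star}(v_k)$.

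\noindent\textbf{Step 1: localization of the intersection.} Along $\d_k X$ the field $v$ is tangent to $\d_{k-1}X$, so $v$ indeed gives a section of the bundle $p_k\colon S(\d_{k-1}X)\to\d_k X$; and by its definition, $\nu_k$ is the positive unit section along $\d_k X$. Using the orthogonal decomposition $v|_{\d_k X}=v_k+\lambda\,\nu_k$, where $v_k\in T(\d_k X)$ and $\lambda=\langle v,\nu_k\rangle$, the two sections $v/\|v\|$ and $\nu_k$ coincide at a point $x\in\d_k X$ precisely when $v_k(x)=0$ \emph{and} $\lambda(x)>0$, i.e.\ when $x\in\Sigma_k^+$. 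After the perturbation of $\nu_k$ (equivalently of the metric $g$) described in the excerpt, these intersections are transversal in $S(\d_{k-1}X)$ and hence isolated, and the perturbation is localized near $\Sigma_k^+$ so it does not disturb the stratification $\{\d_j^+X\}$.

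\noindent\textbf{Step 2: local intersection multiplicity.} Fix $x_\star\in\Sigma_k^+$ and work in a small foliated chart: take local coordinates $(y_1,\dots,y_{n+1-k})$ on $\d_k X$ centered at $x_\star$, together with a trivialization $T(\d_{k-1}X)|_{U}\cong U\times\R^{n+2-k}$ in which $\nu_k$ becomes the constant last basis vector. In this trivialization the sphere bundle fiber near $\nu_k(x_\star)$ is parametrized by the first $n+1-k$ coordinates (the ``equatorial'' directions, which I identify with $T_{x_\star}(\d_k X)$). The two sections $\nu_k$ and $v/\|v\|$ become maps $U\to S^{n+1-k}$; the first is the constant north pole, and the second, in the equatorial chart, reads
\[
y\;\longmapsto\;\frac{v_k(y)}{\lambda(y)}+O(\|v_k(y)\|^{2}).
\]
Since $\lambda(x_\star)>0$, the Jacobian of the ``difference'' map $v/\|v\|-\nu_k$ at $x_\star$ equals $\lambda(x_\star)^{-1}$ times the Jacobian of $v_k$ at its zero $x_\star$. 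Thus the sign of the transverse intersection at $x_\star$—once the chosen orientation of $S(\d_{k-1}X)$ is induced consistently from that of $\d_k X$ and of the spherical fiber—coincides with $\mathrm{sign}\det Dv_k(x_\star)=\mathsf{ind}_{x_\star}(v_k)$.

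\noindent\textbf{Step 3: conclusion.} Summing these local contributions over $\Sigma_k^+$ gives
\[
d_k(v)=v\circ\nu_k=\sum_{x_\star\in\Sigma_k^+}\mathsf{ind}_{x_\star}(v_k)=\mathsf{Ind}^+(v_k),
\]
which, combined with Theorem \ref{th2.3}, yields the stated identity.

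\noindent\textbf{Main obstacle.} The only nontrivial point is Step 2: one must carefully set up the trivialization of the sphere bundle along $\nu_k$ and verify that the natural orientations on $v(\d_k X)$, $\nu_k(\d_k X)$, and $S(\d_{k-1}X)$ combine so that the sign of the transverse intersection recovers the (unsigned-sphere-bundle) Hopf-type index of $v_k$—i.e., checking that no global sign ambiguity creeps in and that the positive proportionality condition $\lambda>0$ makes the correcting factor $\lambda^{-1}$ orientation-preserving. Once this bookkeeping is done, the lemma follows without further calculation.
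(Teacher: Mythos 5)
Your proposal follows the same route as the paper: both identify the intersection locus $v(\d_kX)\cap\nu_k(\d_kX)$ with $\Sigma_k^+$ via the decomposition $v=v_k+\lambda\nu_k$ with $\lambda>0$, match the local intersection sign with the localized index of $v_k$ (a point the paper merely asserts "takes more work" and you actually carry out in your Step 2), and then invoke the Morse formula \ref{eq2.2} for the second equality. The argument is correct and essentially identical in structure, with your local chart computation filling in the sign bookkeeping the paper leaves implicit.
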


\begin{proof} We already have noticed that the intersection set $v(\d_kX) \cap \nu_k(\d_kX)$ projects bijectively under the map $p_k: S(\d_{k-1}X) \to \d_kX$ onto the locus $\Sigma_k^+$, where the component $v_k$ of $v$ vanishes and $v$ points inward of $\d^+_{k -1}X$.  It takes more work to see that the sign attached to the transversal intersection point $a \in v(\d_kX) \cap \nu_k(\d_kX)$ is $(-1)^{\mathsf{ind}(p_k(a))}$, where $\mathsf{ind}(p_k(a))$ is the index (the localized degree) of the field $v_k$ in the vicinity of its singularity $p_k(a) \in \Sigma_k^+$. Thus $d_k(v) := v \circ \nu_k = \mathsf{Ind}^+(v_k)$. By the Morse Formula \ref{eq2.2}, the claim of the lemma follows.
\end{proof}

\begin{cor}\label{cor2.3} The integer $d_k(v) = \sum_{j = k}^{n +1} (-1)^j \chi(\d_j^+X)$ depends only on the singular locus $\Sigma^+_k$ of $v_k$ and on the local indices of its points. 
\hfill\qed
\end{cor}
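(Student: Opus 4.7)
The plan is to observe that Corollary 2.3 is essentially an unpacking of Lemma 2.2, once we remember the \emph{definition} of $\mathsf{Ind}^+(v_k)$. First I would invoke Lemma 2.2 to replace the global alternating sum $\sum_{j=k}^{n+1}(-1)^j\chi(\d_j^+X)$ with the single quantity $\mathsf{Ind}^+(v_k)$. This is the key reduction: Lemma 2.2 has already transferred the content from a global topological expression (in Euler characteristics of all higher strata) to a single index.

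Next I would recall that, by construction,
\[
\mathsf{Ind}^+(v_k) \;=\; \sum_{x_\star \in \Sigma_k^+}\, \mathsf{ind}_{x_\star}(v_k),
\]
where each summand $\mathsf{ind}_{x_\star}(v_k)$ is defined as the degree of a Gauss map from a small sphere centered at $x_\star$ in $\d_kX$ to the unit sphere in the fiber of $T(\d_kX)$ at $x_\star$. Since this degree is computed in any sufficiently small neighborhood of $x_\star$, it is determined entirely by the germ of $v_k$ at $x_\star$.

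Combining these two steps, $d_k(v)$ equals a finite sum indexed by the points of $\Sigma_k^+$, in which each term depends only on the local behavior of $v_k$ near the corresponding point. This is exactly the claim of the corollary. There is no real obstacle to overcome: the mild conceptual point worth flagging is that the $\chi(\d_j^+X)$ on the right-hand side of the formula are \emph{global} invariants of the Morse stratification above level $k$, yet their alternating sum is forced, via Lemma 2.2, to be supported on the much smaller local data at $\Sigma_k^+$. Thus the proof proposal amounts to a short two-line argument citing Lemma 2.2 and the definition of $\mathsf{Ind}^+(v_k)$.
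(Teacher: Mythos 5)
Your proposal is correct and matches the paper's (implicit) argument exactly: the corollary is stated with no separate proof precisely because it is the combination of Lemma \ref{lem2.2} with the definition $\mathsf{Ind}^+(v_k) = \sum_{x_\star \in \Sigma_k^+} \mathsf{ind}_{x_\star}(v_k)$, each summand being determined by the germ of $v_k$ at $x_\star$. Nothing further is needed.
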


\smallskip
\noindent {\bf Question 2.1.} How to compute $d_j(v)$ in the terms of Riemannian geometry and in the spirit of Theorem \ref{th2.4}  and Corollary \ref{cor2.2}? \hfill\qed
\smallskip

For a boundary generic field $v$ and a fixed metric $g$ on $X$, each manifold $\d_jX(v)$ comes equipped with a preferred normal framing $fr_j$ of the normal bundle $\nu\big(\d_jX(v), \d_1X\big)$: just consider the unitary inward normal field $\nu_1$ of $\d_jX(v)$ in $\d_{j-1}^+X(v)$, then the unitary inward normal field $\nu_2$ of $\d_{j-1}X(v)$ in $\d_{j-2}^+X(v)$,  being restricted to $\d_jX(v)$, then the unitary inward normal field $\nu_3$ of $\d_{j-2}X(v)$ in $\d_{j-3}^+X(v)$, being restricted to $\d_jX(v)$, and so on...  

Via the Pontryagin construction \cite{Po}, this framing $fr_j$ generates a continuous map $G_j(v, g): \d_1X \to S^{j-1}$. Its homotopy class $[G_j(v, g)]$ is an element of the cohomotopy set $\pi^{j-1}(\d_1X)$. If $\d_jX(v) = \emptyset$, then we define $G_j(v, g): \d_1X \to S^{j-1}$ to be the trivial map that takes $\d_1X$ to the base point in $S^{j-1}$. 

Unfortunately, as we will see soon,  $[G_j(v, g)] = 0$! However, when $\d_{j+1}X(v) = \emptyset$, each of the two loci $\d_j^\pm X(v)$ is a closed manifold. Then we can apply the Pontryagin construction only to, say, $\d_j^+X(v)$ to get a map $G^+_j(v, g): \d_1X \to S^{j-1}$. This application leads directly to the following proposition.

\begin{cor}\label{cor2.4}  Consider a boundary generic vector field $v$ such that $\d_{j+1}X(v) = \emptyset$ and a metric $g$, defined in the vicinity of $\d_1X$ in $X$. Then these data give rise to continuous map $G_j^+(v, g): \d_1X \to S^{j-1}$. 

The homotopy class $[G^+_j(v, g)] \in \pi^{j-1}(\d_1X)$ is independent of the choice of $g$ and a homotopy of $v$ within the  open subspace of $\mathcal V^\dagger(X)$, defined by the constraint $\d_{j+1}X(v) = \emptyset$. 

In particular, when $\d_3X(v) = \emptyset$, we get an element $$[G^+_2(v)] \in \pi^1(\d_1X) \approx H^1(\d_1X; \Z)$$, and when $\d_4X(v) = \emptyset$, an element $$[G^+_3(v)] \in \pi^2(\d_1X) \approx H^2(\d_1X; \Z).$$

If $\d_1X = S^n$, we can interpret $[G_j^+(v)]$ also as an element of the homotopy group $\pi_n(S^{j-1})$. \hfill\qed
\end{cor}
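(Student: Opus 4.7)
The strategy is to read the element $[G_j^+(v,g)]$ as the Pontryagin--Thom framed cobordism class of the codimension-$(j-1)$ submanifold $\d_j^+X(v) \subset \d_1X$, equipped with the normal framing $fr_j$, and then exhibit explicit framed cobordisms connecting the data coming from different choices of $(v,g)$.

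First I would observe that the hypothesis $\d_{j+1}X(v) = \emptyset$ is precisely what makes $\d_j^+X(v)$ a \emph{closed} submanifold of $\d_1X$: in general its boundary equals $\d_j^+X(v)\cap \d_j^-X(v) = \d_{j+1}X(v)$. So Pontryagin--Thom applies to the closed framed submanifold $(\d_j^+X(v),\, fr_j)$ and produces a well-defined homotopy class $[G_j^+(v,g)] \in \pi^{j-1}(\d_1X)$, independently of the local choices used in the construction.

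For independence of the metric with $v$ fixed, the key point is that the submanifold $\d_j^+X(v) \subset \d_1X$ is defined purely in terms of $v$ and does \emph{not} depend on $g$; only the iteratively-constructed inward unit normal fields $\nu_1, \dots, \nu_{j-1}$ that make up $fr_j$ depend on $g$. Given two metrics $g_0, g_1$, the linear interpolation $g_t := (1-t)g_0 + t g_1$ is a smooth one-parameter family of Riemannian metrics, so the corresponding framings vary smoothly in $t$. This produces a homotopy of framings on a fixed closed submanifold, and hence, via Pontryagin--Thom, a homotopy between the associated maps $\d_1X \to S^{j-1}$.

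For independence under a path $\{v_t\}_{t\in[0,1]}$ lying in the subspace defined by $\d_{j+1}X(v_t) = \emptyset$, I would apply a parametric version of boundary genericity to the family $\{v_t\}$, viewed as a single field on the cylinder $X \times [0,1]$. The swept-out set $W := \bigcup_{t \in [0,1]} \d_j^+X(v_t) \times \{t\}$ should form a compact codimension-$(j-1)$ submanifold of $\d_1X \times [0,1]$ with boundary $\d_j^+X(v_0) \sqcup \d_j^+X(v_1)$; the standing hypothesis that $\d_{j+1}X(v_t)$ remains empty throughout the path is what prevents $W$ from developing unexpected lateral boundary components in the interior of the cylinder. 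The iterated inward-normal construction extends continuously in $t$ to frame the normal bundle of $W$, turning $W$ into a framed cobordism. Pontryagin--Thom again delivers the desired homotopy between $G_j^+(v_0,g)$ and $G_j^+(v_1,g)$.

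The main obstacle is the third step: justifying that parametric transversality can be arranged along $\{v_t\}$ so that $W$ is indeed a smooth manifold-with-boundary of the expected dimension, without the stratum $\d_{j+1}X(v_t)$ accidentally opening up for some intermediate $t$. This should follow from applying Theorem \ref{th2.1} to families, perturbing the homotopy $v_t$ rel endpoints so that the total field on $X \times [0,1]$ is boundary generic in the sense of Definition \ref{def2.1}; the openness of the constraint $\d_{j+1}X(v) = \emptyset$ along our path (asserted in the statement) then allows us to keep the perturbation small enough to preserve emptiness of $\d_{j+1}$. The concluding identifications $\pi^1(\d_1X) \approx H^1(\d_1X;\Z)$ and $\pi^2(\d_1X) \approx H^2(\d_1X;\Z)$ (in the relevant dimensions) are standard consequences of $S^1$ being a $K(\Z,1)$ and of Hopf's classification theorem, respectively, while the interpretation as an element of $\pi_n(S^{j-1})$ when $\d_1X = S^n$ is immediate.
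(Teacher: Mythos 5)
Your proposal is correct and follows essentially the route the paper intends: the corollary is stated without proof as a direct application of the Pontryagin construction to the closed framed submanifold $(\d_j^+X(v), fr_j)$, and your two invariance steps are exactly the content the paper supplies elsewhere (metric-independence via interpolation of framings, and homotopy-invariance via the isotopy of strata along a path of boundary generic fields, which is Lemma \ref{lem3.1} and Theorem \ref{th3.1}). The only remark worth making is that your worry about parametric transversality for the swept-out cobordism $W$ is already resolved by Lemma \ref{lem3.1}: since each $v_t$ is individually boundary generic, the relevant sections are transversal for every $t$, which is all that lemma needs to produce the product structure on $W$ (after smoothing the path in $t$), so no additional perturbation of the homotopy rel endpoints is required.
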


The elements $[G_j(v)]$ and $[G^+_j(v)]$ have another classical interpretation as elements of \emph{oriented framed cobordism set} $\Omega_{n-j +1}^{\mathsf{fr}}(\d_1X)$.  In fact, the pair  $(\d_jX(v), fr_j)$ defines the trivial element in $\Omega_{n-j +1}^{\mathsf{fr}}(\d_1X)$. In contrast, if $\d_{j+1}X(v) = \emptyset$, then the bordism class $(\d^+_jX(v), fr_j)$ may be nontrivial. 

Let us recall the definition of framed cobordisms (for example, see \cite{Kos}). Let $M_0, M_1 \subset Y$ be  oriented closed smooth $m$-dimensional submanifolds of a compact $(m+k)$-manifold $Y$, whose normal bundles $\nu(M_0, Y)$ and $\nu(M_1, Y)$ are equipped with framings $fr_0$ and $fr_1$, respectively. 

We say that two pairs $(M_0, fr_0)$  and $(M_1, fr_1)$ define the same element in $\Omega_m^{\mathsf{fr}}(Y)$, if there is a compact $(m+1)$-dimensional oriented submanifold $W \subset Y\times [0, 1]$ whose normal bundle $\nu(W, Y\times [0, 1])$ admits a framing $Fr$ so that: 

(1) $\d W = M_1 \times \{1\} \coprod -M_0 \times \{0\}$,  

(2) the restriction of $Fr$ to $M_1 \times \{1\}$ coincides with $fr_1$, and the restriction of $Fr$ to $M_0 \times \{0\}$ coincides with $fr_0$.

Then the Pontryagin construction establishes a bijection $P: \Omega_m^{\mathsf{fr}}(Y) \to \pi^k(Y)$, where $m + k = \dim Y$. If $m <  k - 1$ both sets admit a structure of abelian groups and the bijection $P$ becomes a group isomorphism.
\bigskip

Now we are in position to explain why $[G_j(v)] = 0$. Consider the obvious embedding $$\a: \d^+_1X(v) \subset \d_1X \times \{0\} \subset \d_1X \times [0, 1].$$ We can isotop $\a$ in $\d_1X \times [0, 1]$ to a regular embedding $$\b: \d^+_1X(v) \subset \d_1X \times [0, 1]$$ such that: 

$(1)$ $\b|_{\d_2X(v)} =  \a|_{\d_2X(v)}$,  and 

$(2)$ the inward normal field $\nu\Big(\b\big(\d_2X(v)\big),\; \b\big(\d_1^+X(v)\big)\Big)$ is parallel to the factor $[0, 1]$ in the product $\d_1X \times [0, 1]$. 

Note that for $j > 2$, all the normal fields $\nu(\d_j X(v), \d_{j-1}^+X(v)$ are preserved under the imbedding $\b$. So, for any $j \geq 2$, the normal framing $fr_j$ of $\a(\d_jX(v))$ in $\a(\d_1X)$ extends to a normal framing $\b(fr_{j-1})$ of $\b(\d^+_{j-1}X)$ in $\d_1X \times [0, 1]$. Therefore $[G_j(v)] = 0$ as an element of the framed bordisms of $\d_1X$. As a result, when $\d_{j+1}X(v) = \emptyset$, we get $[G^+_j(v)] = - [G^-_j(v)]$ in $\Omega_{n-j +1}^{\mathsf{fr}}(\d_1X)$ (equivalently, in $\pi^{j-1}(\d_1X)$).
\bigskip

\section{Deforming the Morse Stratification}

Let $X$ be a smooth compact $(n+1)$-manifold with boundary $\d X$. A boundary generic field $v$ (see  Definition \ref{def2.1}) gives rise to two stratifications \ref{eq2.1}.  

We are going to investigate how the stratification  $\{\d_j^\pm X(v)\}_j$ changes as a result of deforming the vector field $v$.
\smallskip

\begin{lem}\label{lem3.1} Let $N \subset Y$ be a closed  submanifold of a  manifold $Y$ and $M$ a closed manifold. Consider a  family of  maps $\{f_t : M \to Y\}_{t \in [0, 1]}$ such that each $f_t$ is transversal to $N$. All the manifolds, maps, and families of maps are assumed to be smooth. 

Then all the submanifolds $\{f_t(M) \cap N \}$ are isotopic in $N$. In particular, the intersections $f_0(M) \cap N$ and $f_1(M) \cap N$ are diffeomorphic.
\end{lem}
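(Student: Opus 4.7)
My plan is to assemble the family $\{f_t\}$ into a single smooth map out of the product $M \times [0,1]$, verify that this assembled map is transversal to $N$ thickened in the $t$-direction, and then invoke Ehresmann's fibration theorem.

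First I would define the evaluation map $F : M \times [0,1] \to Y \times [0,1]$ by $F(m,t) := (f_t(m),\, t)$ and check that $F$ is transversal to the submanifold $\tilde N := N \times [0,1] \subset Y \times [0,1]$. At any point $(m,t)$ with $F(m,t) \in \tilde N$, the image of $dF_{(m,t)}$ contains the subspace $df_t(T_m M) \times \{0\}$ together with the vector $(\partial_t f_t(m),\, 1)$. Combined with $T_{F(m,t)}\tilde N = T_{f_t(m)} N \times \R$ and the hypothesis $df_t(T_m M) + T_{f_t(m)} N = T_{f_t(m)} Y$, every element of $T_{f_t(m)} Y \times \R$ is obtained as a sum of a vector in the image of $dF$ and a vector tangent to $\tilde N$; a parallel check at the boundary slices $Y \times \{0\}$ and $Y \times \{1\}$ ensures boundary transversality.

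Next, set $W := F^{-1}(\tilde N) \subset M \times [0,1]$. By the transversality just established, $W$ is a smooth compact submanifold with $\partial W = f_0^{-1}(N) \times \{0\} \sqcup f_1^{-1}(N) \times \{1\}$. I would then show that the projection $\pi : W \to [0,1]$ is a proper submersion: properness is automatic since $M$ is closed, hence $W$ is compact; for the submersion property at $(m,t) \in W$, I need a tangent vector to $W$ with nonzero $t$-component, i.e.\ some $v \in T_m M$ with $df_t(v) + \partial_t f_t(m) \in T_{f_t(m)} N$, which again follows from the transversality of $f_t$ to $N$. Ehresmann's theorem then identifies $\pi : W \to [0,1]$ with a trivial fiber bundle over the contractible interval, so all preimages $f_t^{-1}(N)$ are pairwise diffeomorphic; more precisely, lifting $\partial/\partial t$ to a vector field on $W$ and integrating produces a smooth family of diffeomorphisms $\Phi_t : f_0^{-1}(N) \to f_t^{-1}(N)$.

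Finally, composing with the embeddings $f_t|_{f_t^{-1}(N)}$ yields a smooth family of embeddings $f_t \circ \Phi_t : f_0^{-1}(N) \to N$ whose images trace out $f_t(M) \cap N$; this is the desired isotopy of submanifolds of $N$, which the Thom isotopy extension theorem promotes to an ambient isotopy of $N$ and, specialized at $t = 0,1$, supplies the claimed diffeomorphism. The only nontrivial step is the transversality computation that sets up $W$ as a smooth manifold and $\pi$ as a submersion; everything else is a routine application of Ehresmann and isotopy extension. One minor clarification is that the statement implicitly assumes each $f_t|_{f_t^{-1}(N)}$ to be an embedding so that $f_t(M) \cap N$ is a submanifold of $N$; otherwise the conclusion should be read at the level of the preimage submanifolds $f_t^{-1}(N) \subset M$.
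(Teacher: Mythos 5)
Your proposal is correct and follows essentially the same route as the paper: assemble the family into a single map, use transversality to realize $W = F^{-1}(N)$ as a cobordism between $f_0^{-1}(N)$ and $f_1^{-1}(N)$, trivialize it over $[0,1]$ (the paper does this by explicitly constructing the nonvanishing normal vector field $w$ and flowing, which is precisely the proof of Ehresmann's theorem you invoke), and finish with the Thom isotopy extension theorem. Your closing caveat that $f_t|_{f_t^{-1}(N)}$ must be an embedding for $f_t(M)\cap N$ to be a submanifold of $N$ is a fair and worthwhile clarification of the statement, consistent with how the lemma is actually applied in the paper (to sections of bundles, which are embeddings).
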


\begin{proof} Let $F: M \times [0, 1] \to Y$ be the map defined by the family $\{f_t\}$. Thanks to  the transversality hypothesis, $F$ is transversal  to $N$ and $F^{-1}(N)$ is a submanifold of $M \times [0, 1]$ whose boundary is $$f_0^{-1}(N) \sqcup f_1^{-1}(N) \subset M \times \d([0, 1]).$$ 
Let $w \neq 0$ be a vector field  on $F^{-1}(N)$, normal to each codimension $1$ submanifold $f_t^{-1}(N)$ in $F^{-1}(N)$. In the construction of $w$, we evidently rely on the property of each $f_t$ being transversal to $N$.  Since $\d(F^{-1}(N)) = f_0^{-1}(N) \sqcup f_1^{-1}(N)$ and $w \neq 0$, each $w$-trajectory that originates at a point of $f_0^{-1}(N)$ must reach $f_1^{-1}(N)$ in finite time. Therefore, employing the $w$-flow, $F^{-1}(N)$ is diffeomorphic to  $f_0^{-1}(N) \times [0, 1]$, and the $F$-image of that product structure in $F^{-1}(N)$ defines a smooth isotopy between $f_0(M) \cap N$ and  $f_1(M) \cap N$ in $N$. This isotopy extends to an ambient isotopy of $N$ itself \cite{Thom}. 

Note that these arguments fail in general if ether $M$ or $N$ have boundaries. However, under additional assumptions (such as  $f_t|_{\d M}$ being $t$-independent and $f_t(M) \cap \d N = \emptyset$), the relative versions of the lemma are valid. 
\end{proof}

\begin{thm}\label{th3.1} The diffeomorphism type of each stratum $\d_j^\pm X(v)$ is constant within each path-connected component of the space $\mathcal V^\dagger(X)$ of boundary generic fields. 
\end{thm}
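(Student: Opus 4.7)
The plan is to show that for any smooth path $\{v_t\}_{t \in [0,1]}$ in $\mathcal V^\dagger(X)$, there exists a smooth ambient isotopy of $\d X$ carrying each stratum $\d_j^\pm X(v_0)$ diffeomorphically onto $\d_j^\pm X(v_1)$. Since every path-component of $\mathcal V^\dagger(X)$ is path-connected by smooth paths, this establishes the theorem. I would proceed by induction on $j$. For the base case, fix a Riemannian metric $g$ on $X$ and set $\psi(x,t) := g_x(v_t(x), \nu_x)$ on $\d X \times [0,1]$, where $\nu$ is the inward unit normal. Boundary genericity of each $v_t$ says precisely that $0$ is a regular value of $\psi_t := \psi(\cdot, t)$, so Lemma \ref{lem3.1} applied with $M = \d X$, $Y = \R$, $N = \{0\}$, and $f_t = \psi_t$ produces an isotopy of $\d_2 X(v_t) = \psi_t^{-1}(0)$ inside $\d X$, which extends to an ambient isotopy $\phi_t$ of $\d X$ by the isotopy extension theorem. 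Since $\d_1^\pm X(v_t) = \{\pm \psi_t > 0\}$ is a union of full connected components of $\d X \setminus \d_2 X(v_t)$, continuity in $t$ forces $\phi_t(\d_1^\pm X(v_0)) = \d_1^\pm X(v_t)$.

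For the inductive step, assume I have constructed an ambient isotopy $\phi_t$ of $\d X$ carrying $\d_j^\pm X(v_0)$ onto $\d_j^\pm X(v_t)$ for every $j \leq k$. Pulling $v_t$ back by $\phi_t$, I may arrange that the strata $\d_j X(v_t)$ coincide with $\d_j X(v_0)$ setwise for all $j \leq k$. On the fixed ambient manifold $\d_{k-1} X(v_0)$, the tangential component of $v_t$ defines a smooth family of sections of the quotient line bundle $T(\d_{k-1} X(v_0))/T(\d_k X(v_0))$ over $\d_k X(v_0)$, each transversal to the zero section by boundary genericity. Lemma \ref{lem3.1} applied to this family yields an isotopy of the zero loci, i.e., of $\d_{k+1} X(v_t)$, inside $\d_k X(v_0)$. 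I would then extend this isotopy to an ambient isotopy of $\d X$ leaving the lower-dimensional strata $\{\d_j X(v_0)\}_{j \leq k}$ pointwise fixed, using a relative isotopy extension theorem in compatible nested tubular neighborhoods. As before, the sign labels $\d_{k+1}^\pm$ are carried along correctly by continuity in $t$.

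The main obstacle I anticipate is precisely this relative extension step: realizing the motion of $\d_{k+1} X(v_t)$ by an ambient isotopy of $\d X$ that fixes every previously-handled stratum demands a system of compatible tubular neighborhoods of the nested submanifolds $\d_1 X \supset \d_2 X \supset \cdots \supset \d_k X$, together with the check that the extended isotopy preserves boundary genericity throughout. An attractive alternative that would sidestep this bookkeeping is the map $\Psi^\d(v): \d X \to \R^n$ referenced in the proof of Theorem \ref{th2.2}: it realizes every stratum simultaneously as $\d_j X(v) = \Psi^\d(v)^{-1}(F_j)$ for a fixed flag $\{F_j\}$ in $\R^n$, depends smoothly on $v$, and is transversal to each $F_j$ precisely when $v$ is boundary generic. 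A single application of Lemma \ref{lem3.1} to the smooth family $\{\Psi^\d(v_t)\}$ would then handle all $j$ at once, with the $\pm$ signs determined by the sides of $F_j$ inside $F_{j-1}$.
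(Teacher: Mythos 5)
Your proposal is correct and follows essentially the same route as the paper: replace the path by a smooth one, apply Lemma \ref{lem3.1} inductively to the family of sections cutting out $\d_{k+1}X(v_t)$ (after pulling $v_t$ back by the already-constructed ambient isotopy so that the lower strata are fixed), extend by the isotopy extension theorem, and track the $\pm$ labels by continuity. Even your ``alternative'' via the map $\Psi^\d(v)$ and the flag $\{F_j\}$ matches the paper, which defers exactly that uniform argument to the proof of Theorem 3.4 in \cite{K3}.
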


\begin{proof} If two generic fields, $v_0$ and $v_1$, are connected by a continuous path $v: [0, 1] \to \mathcal V^\dagger(X)$, then they can be connected by a path $\tilde v: [0, 1] \to \mathcal V^\dagger(X)$ such that the dependence of the field $\tilde v(t)$ on $t \in [0,1]$ is smooth. The argument is based on the property of generic fields to form an open set in the space of all fields (Theorem \ref{th2.1}), the smooth partition of unity technique (which utilizes the compactness of manifold $X \times [0,  1]$), and the standard  techniques of approximating continuos functions with the smooth ones.

Thus it suffices to consider a smooth $1$-parameter family of vector fields $v_t \in \mathcal V^\dagger(X)$, connecting $v_0$ to $v_1$. Since  any generic field $v_t|_{\d_1X}$, viewed as a section of the vector bundle $\eta_1: TX|_{\d_1X} \to \d_1X$, is transversal its zero section, we may apply Lemma \ref{lem3.1} (with $M = \d_1X$, $N$ being the zero section of $\eta$, $Y = E(\eta_1)$, and $f_t = v_t$) to conclude that all the submanifolds ${\d_2X(v_t)}_t$ are isotopic in $\d_1X$. 

Since each $\d_2X(v_t)$ divides $\d_1X$ into a pair of complementary domains, $\d_1^+X(v_t)$ and $\d_1^-X(v_t)$, and since their polarity $\pm$ is determined by the inward/outward direction of $v_t$, which changes continuously with $t$, the ambient isotopy of $\d_1X$ (which takes $\d_2X(v_0)$ to $\d_2X(v_t)$) must take $\d_1^+X(v_0)$ to $\d_1^+X(v_t)$. The isotopy $h_t: \d_1X \to \d_1X$ extends to an isotopy $\tilde h_t: X \to X$.

A similar argument applies to lower strata $\d_j^\pm X(v_t)$. Indeed, with the isotopy $h_t: \d_1X \to \d_1X$ that takes $\d_2X(v_0)$ to $\d_2X(v_t)$ in place, consider the two sections, $v_0$ and $(h_t^{-1})_\ast(v_t)$, of the bundle $\eta_2: T(\d_1X)|_{ \d_2X(v_0)} \to \d_2X(v_0)$, both sections being transversal to the zero section of $\eta_2$.
Applying again Lemma \ref{lem3.1}, we conclude that the loci $\d_3X(v_0)$ and $h_t^{-1}(\d_3X(v_t))$ are isotopic in $\d_2X(v_0)$ (recall that these loci are exactly the transversal intersections of two sections $v_0$ and $(h_t^{-1})_\ast(v_t)$ of $\eta_2$ with its zero section). Again, an isotopy $h'_t: \d_2X(v_0) \to \d_2X(v_0)$ that takes $\d_3X(v_0)$ to $h_t^{-1}(\d_3X(v_t))$ must take $\d_2^+X(v_0)$ to $h_t^{-1}(\d_2^+X(v_t))$. The isotopy $h'_t$ extends to an isotopy $\tilde h'_t: X \to X$ which preserves the pair $\d_2X(v_0) \subset \d_1X$.  So, the pairs $\d_3X(v_0) \subset \d_2^+X(v_0)$ and $\d_3X(v_t) \subset\d_2^+X(v_t)$ are diffeomorphic via the composite isotopy $\tilde h'_t \circ \tilde h_t$.

This reasoning can be recycled to prove that all the pairs $\d_j^+X(v_0)$ and $\d_j^+X(v_t)$ are diffeomorphic via a \emph{single} isotopy of $X$. This argument will be carried explicitly in the proof of  Theorem 3.4 from \cite{K3}. 
\end{proof}

\begin{cor}\label{cor3.1} Let $X$ be a $(n+1)$-dimensional compact smooth manifold with boundary. 

Within each path-connected component of the space $\mathcal V^\dagger(X)$ of generic fields, the numbers  $\{d_k(v)\}_{0 \leq k \leq n}$, as well as the numbers $\{\chi(\d_k^\pm X(v))\}_{1 \leq k \leq n+1}$, are constant.
\end{cor}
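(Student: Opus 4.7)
The plan is to derive this corollary directly from Theorem \ref{th3.1} together with Lemma \ref{lem2.2}, since the quantities in question are either diffeomorphism invariants or algebraic combinations thereof.

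First I would invoke Theorem \ref{th3.1}: for any two fields $v_0, v_1$ in the same path-connected component of $\mathcal V^\dagger(X)$, each stratum $\d_k^\pm X(v_0)$ is diffeomorphic to the corresponding $\d_k^\pm X(v_1)$. Since the Euler characteristic is a diffeomorphism invariant of compact manifolds (with or without boundary), it follows immediately that $\chi(\d_k^\pm X(v))$ is constant on each path-connected component of $\mathcal V^\dagger(X)$, for every $1 \leq k \leq n+1$.

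Next I would handle $d_k(v)$ by appealing to Lemma \ref{lem2.2}, which gives the identity
\[
d_k(v) = \sum_{j=k}^{n+1} (-1)^j \chi(\d_j^+X(v)).
\]
Because each summand on the right-hand side has already been shown to be constant on path-connected components of $\mathcal V^\dagger(X)$, so is the sum $d_k(v)$, for every $0 \leq k \leq n$.

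There is essentially no obstacle here: Theorem \ref{th3.1} has done all of the genuine geometric work by producing an ambient isotopy of $X$ carrying the full stratification $\{\d_j^\pm X(v_0)\}$ onto $\{\d_j^\pm X(v_t)\}$, so all that remains is to remark that the invariants under consideration depend only on the diffeomorphism types of the strata. One small point worth flagging, if only for completeness, is that Lemma \ref{lem2.2} requires the auxiliary Riemannian metric $g$ used to define $d_k(v)$; but since the right-hand side $\sum_{j=k}^{n+1}(-1)^j \chi(\d_j^+X(v))$ manifestly does not involve $g$, the quantity $d_k(v)$ is independent of the choice of metric, so no care is needed in comparing values along the path.
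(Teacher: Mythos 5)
Your argument is correct and is exactly the route the paper takes: the paper's proof of this corollary is a one-line appeal to Theorem \ref{th3.1} and Lemma \ref{lem2.2}, and your write-up simply spells out the two steps (diffeomorphism invariance of $\chi$ for the strata, then the alternating-sum identity for $d_k$) in more detail. Your closing remark about metric-independence of $d_k(v)$ is a sensible clarification but not something the paper bothers to address.
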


\begin{proof} The claim follows instantly from Theorem \ref{th3.1} and Lemma \ref{lem2.2}. 
\end{proof}

For a manifold $X$ with nonempty boundary, by deforming any given function $f: X \to \R$ and its gradient-like field $v$, we can expel the isolated $v$-singularities from $X$. This can be achieved by the appropriate ``finger moves" which originate at points of the boundary $\d X$ and engulf the isolated singularities of $v$. The result of these manipulations lead to
 
\begin{lem}\label{lem3.2} Any $(n + 1)$-manifold $X$ with a non-empty boundary admits a Morse function $f: X \rightarrow \R$ with no critical points in the interior of $X$ and such that $f|: \d X \rightarrow \R$ is a Morse function. Such functions form an open nonempty set in the space $C^\infty(X)$ of all smooth functions on $X$. 

As a result, the gradient-like vector fields $v \neq 0$ on $X$ form an open nonempty set in the space $\mathcal V(X)$ of all all vector fields on $X$.
\end{lem}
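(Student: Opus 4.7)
The approach splits into two halves. First, to establish existence I would start from a boundary generic Morse pair $(f_0,v_0)$ on $X$, whose existence is furnished by Theorem~\ref{th2.2}. The function $f_0$ has a finite set $\{p_1,\dots,p_m\}\subset\mathrm{int}(X)$ of interior critical points, while $f_0|_{\d X}$ is already Morse with its own (disjoint) finite critical locus $S\subset\d X$; at each point of $S$ the covector $df_0$ is a nonzero normal covector. My plan is to eliminate the $p_i$ one at a time by a ``finger move'' along an arc $\gamma_i$ joining $p_i$ to a point $q_i\in\d X\setminus S$: I would pick these arcs pairwise disjoint, meeting $\d X$ only at $q_i$ transversally, and each avoiding the other critical points. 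In a tubular half-ball neighborhood $U_i$ of $\gamma_i$ modeled on $\{x_{n+1}\geq 0\}\subset\R^{n+1}$ with $q_i$ at the origin and $p_i$ at the top of the $x_{n+1}$-axis, I would interpolate between the Morse normal form $\sum_j\pm x_j^2$ near $p_i$ and the linear model $x_{n+1}+c$ near $q_i$ by a bump-function construction supported in the interior of $U_i$. The resulting modification agrees with $f_0$ outside $U_i$, has no critical points in $U_i$, and keeps $f|_{\d X}$ unchanged on a neighborhood of $S$ (since $U_i\cap\d X$ is a small disk around $q_i\notin S$). Iterating over $i$ produces the desired function $f$ with $df\neq 0$ everywhere on $X$ and $f|_{\d X}$ still Morse.

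Next I would prove openness. Since $X$ is compact, the condition $df\neq 0$ everywhere is $C^1$-open (the positive infimum of $|df|$ persists under small perturbation), and ``$f|_{\d X}$ is Morse'' is open by the classical stability of nondegenerate critical points. The set of such $f$ is therefore open and, by the construction above, nonempty. For the corollary about vector fields, given any $f$ produced in the first half and any Riemannian metric $g$, the field $v:=\nabla_g f$ is nowhere zero, gradient-like for $f$, and thus exhibits an element of the desired set in $\mathcal V(X)$. Openness of the set $\{v\in\mathcal V(X):\, v\neq 0 \text{ and gradient-like}\}$ follows because, for a $C^0$-small perturbation $\tilde v$ of such $v$, both $\tilde v\neq 0$ and $df(\tilde v)>0$ persist by compactness of $X$ and the positive infimum of $df(v)$, so $\tilde v$ remains gradient-like for the same $f$.

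The hard part will be the explicit construction of the finger move: showing that inside $U_i$ one can interpolate between a Morse singularity at $p_i$ and a nonsingular linear model at $q_i$ without creating new zeros of $df$ and without disturbing the Morse property of $f|_{\d X}$. This is standard in Cerf-style Morse theory and can be carried out by choosing coordinates straightening $\gamma_i$ and using a carefully tuned radial bump function, but it is the only genuinely nontrivial step in the argument. Everything else reduces to compactness of $X$ and elementary transversality.
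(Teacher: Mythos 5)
Your overall strategy coincides with the paper's: both proofs eliminate the interior critical points by ``finger moves'' along a system of pairwise disjoint arcs joining the critical points to the boundary, and both derive openness from compactness of $X$ (the conditions $df\neq 0$ and $df(v)>0$ have positive infima that persist under small perturbations). The implementations differ in one respect. The paper never modifies $f$ along the arcs: it deletes a small regular neighborhood of each arc (``digs a dead-end tunnel''), restricts $f$ to the remaining portion $X^\odot$, observes that $X^\odot$ is diffeomorphic to $X$ and that $f|_{X^\odot}$ has no interior critical points, and only at the very end perturbs to make the restriction to the new boundary a Morse function. You keep the domain fixed and interpolate $f$ inside a half-ball tube $U_i$ between the Morse normal form at $p_i$ and a linear model at $q_i$. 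Your version therefore requires the explicit Cerf-style interpolation that you yourself flag as ``the hard part''; the paper's version trades that for the easier observation that drilling a tunnel does not change the diffeomorphism type of $X$.

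There is one concrete gap to repair. Your model $f=x_{n+1}+c$ near $q_i$, with $\d X=\{x_{n+1}=0\}$, restricts to the constant $c$ on the boundary disk $U_i\cap\d X$, so the modified $f|_{\d X}$ is certainly not Morse there; moreover, in the transition region between the zone where $f$ is the new model and the zone where $f$ is unchanged, new critical points of $f|_{\d X}$ necessarily appear and their nondegeneracy is not controlled by your construction. These new boundary critical points are unavoidable: if $df$ were left unchanged on all of $\d U_i$, including $U_i\cap\d X$, the degree of $\nabla f/\|\nabla f\|$ on $\d U_i$ would still equal $(-1)^{i(p_i)}\neq 0$ and a zero of $df$ would have to survive inside $U_i$ (Lemma \ref{lem3.3} counts exactly the boundary critical points created by the move). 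The repair is the paper's final step, which you should append: once $df\neq 0$ on all of $X$, use the openness you establish in your second half, together with the density of Morse functions on the closed manifold $\d X$, to perturb $f$ so that $f|_{\d X}$ becomes Morse while $df\neq 0$ persists. With that step added, your argument is complete.
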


\begin{proof} Let us sketch the main idea of the argument. Start with a Morse function $f: X \to \R$. Connect each critical point in the interior of $X$ by a smooth path to a point on the boundary in such a way that a system of non-intersecting paths is generated. Then delete from $X$ small regular neighborhoods of those paths (``dig a system of dead-end  tunnels") and restrict $f$ to the remaining portion $X^\odot$ of $X$. Smoothen the entrances  of the tunnels so that the boundary of $X^\odot$ will be a smooth manifold which is diffeomorphic to $X$. We got a nonsingular function $f$ on $X^\odot$. A slight perturbation of $f$ on $X^\odot$ will not introduce critical points in the interior of $X^\odot$ and will deliver a Morse function on its boundary.  Indeed, recall that  the sets of Morse functions on $X$ and $\d X$ are open and dense in the spaces $C^\infty(X)$ and $C^\infty(\d X)$ of all smooth functions, respectively (for example, see \cite{GG}). 

Of course $v \neq 0$ is an open condition imposed on a vector field on a compact manifold. On the other hand, if $df(v) > 0$, then any field $v'$, sufficiently close to $v$, will have the property $df(v') > 0$. The previous arguments show that the set of gradient-like non-vanishing fields is nonempty. So it is an open nonempty subspace in the space $\mathcal V(X)$ of all all vector fields on $X$.  
\end{proof}

Eliminating isolated critical points  of a given function $f: X \to \R$ on a manifold with boundary is not ``a free lunch": the elimination introduces new critical points of the restricted function $f: \d X \to \R$. This is a persistent theme throughout our program: 

\emph{Expelling critical points of  gradient flows from a manifold $X$ leaves crucial residual geometry on its boundary}.  

This boundary-confined geometry allows for a reconstruction of the topology of $X$. 

Ideas like these will be developed in the future papers from this series.  Meanwhile, the following lemma gives a taste of  things to come.

\begin{lem}\label{lem3.3} Let $f: X \to \R$ be a Morse function with no local \emph{extrema} in the interior of a $(n +1)$-manifold $X$. Then an elimination by a finger move\footnote{as in the proof of Lemma \ref{lem3.2}} of each $f$-critical point $p_\star$ of the Morse index $i(p_\star)$ results in the introduction of $[2(n + 1 - i(p_\star)) -1]$ new critical points of positive type and $2i(p_\star) + 1$ new critical points of negative type for the modified function $f|_{\d X}$.
\end{lem}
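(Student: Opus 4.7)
\smallskip

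\noindent\textbf{Proof proposal.} The plan is to work in a local Morse chart around $p_\star$, describe the finger move explicitly, perturb the function so that it is Morse on the new boundary, and count the new critical points with their signs. Choose coordinates $(x_1,\dots,x_{n+1})$ at $p_\star$ so that $f = f(p_\star) - \sum_{k\leq i} x_k^2 + \sum_{k>i} x_k^2$. Since $p_\star$ is not a local extremum, $0 < i < n+1$ and both sets of eigendirections are nonempty. I would take the tunneling arc $\gamma$ to leave $p_\star$ along the positive eigendirection $+e_{n+1}$ and reach $\partial X$ transversely. The finger move then removes a regular neighborhood of $B_\epsilon(p_\star)\cup\gamma$ --- an $(n+1)$-ball meeting $\partial X$ in an $n$-disk $D_{\mathrm{old}}$ --- and smooths the resulting corners. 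Away from a small region, $\partial X^\odot$ coincides with $\partial X$; inside that region it is the sphere $S^n_\epsilon$ with a small cap $D_{\mathrm{tun}}$ around $+e_{n+1}$ removed, joined to the remnant $\partial X\setminus D_{\mathrm{old}}$ by a thin collar $C\cong S^{n-1}\times[0,1]$.

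Next, perturb $f$ near $p_\star$ to $\tilde f = f(p_\star) + \sum_k c_k\, x_k^2$ with distinct $|c_k|$ and signs of $c_k$ matching those in $f$; this is supported in a tiny ball and does not affect the finger-move picture. Then $\tilde f|_{S^n_\epsilon}$ is Morse with critical points exactly at $\pm e_k$, $k = 1,\dots,n+1$. At such a point $\nabla\tilde f = \pm 2 c_k e_k$ is radial, and since the interior of $X^\odot$ lies \emph{outside} the removed ball, the radial direction pointing away from $p_\star$ is the inward normal to $\partial X^\odot$. Consequently, for $k > i$ (so $c_k > 0$) both points $\pm e_k$ are positive critical points of $\tilde f|_{\partial X^\odot}$, producing $2(n+1-i)$ of them; for $k\leq i$ (so $c_k < 0$) they are negative, producing $2i$. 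The cap $D_{\mathrm{tun}}$ around $+e_{n+1}$ erases exactly one of the positive critical points, so the piece $S^n_\epsilon\setminus D_{\mathrm{tun}}$ contributes $2(n+1-i) - 1$ positive and $2i$ negative new critical points.

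It remains to handle the smoothing collar $C$. Modelling $C$ as the graph of a bump profile interpolating between the spherical cap and the flat local piece of $\partial X$, a direct local computation shows that $\tilde f|_{\partial X^\odot}$ picks up precisely one further critical point in $C$, occurring where the smoothed surface is tangent to a level set of $\tilde f$. At this point $\nabla\tilde f$ is directed along the outward normal to $\partial X^\odot$ (i.e.\ into the removed tunnel), so the new critical point is of negative type. Combining with the sphere count gives the advertised $2(n+1-i) - 1$ positive and $2i + 1$ negative new critical points.

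The hard step is the collar analysis: establishing that exactly one new critical point is created in $C$ and that it is negative. The precise count depends on the smoothing profile but is stable under generic perturbations, and in any case can be cross-checked against Corollary 2.1: since $\chi(X^\odot) = \chi(X)$ while the interior Morse index has lost a contribution $(-1)^i$, the net alternating sum of the new boundary critical points is forced to absorb this loss, a constraint compatible with the proposed positive/negative split.
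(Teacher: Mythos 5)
Your proposal follows essentially the same route as the paper's proof: pass to a Morse chart at $p_\star$, count the critical points of $f$ restricted to the small sphere $S^n_\epsilon$ (two per coordinate axis, with polarity governed by the sign of the corresponding eigenvalue, since the gradient there is radial and the inward normal of $X^\odot$ points away from $p_\star$), then observe that the tunnel destroys one positive critical point at its sphere end and creates one negative critical point of index $1$ at its $\d X$ end. Your sphere count --- $2(n+1-i)$ positive and $2i$ negative --- is the correct one and matches the lemma's conclusion (the paper's intermediate sentence states the two labels the other way around, but its final tally agrees with yours).

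There is, however, one step where your setup would actually fail as written: you drill the tunnel along $+e_{n+1}$, an \emph{arbitrary} positive eigendirection, whereas the paper drills it from a local \emph{maximum} of $f|_{S_{p_\star}}$, i.e.\ along the eigendirection of the largest eigenvalue. This choice is not cosmetic. If $c_{n+1}$ is not the largest eigenvalue, the critical point of $f|_{S^n_\epsilon}$ at $+e_{n+1}$ has unstable directions along the sphere; along those directions $f|_{S^n_\epsilon}$ increases \emph{away} from the tunnel mouth, while on the tunnel wall $f$ increases \emph{away} from the sphere. The component of the projected gradient transverse to the rim $S^{n-1}$ where the sphere meets the tunnel wall then changes sign across the rim, forcing additional critical points of $\tilde f|_{\d X^\odot}$ on (the smoothing of) that rim and spoiling the count. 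Drilling from the local maximum makes the projected gradient flow cross the sphere-end rim coherently (toward the rim on the sphere side, up the tunnel on the cylinder side), so no extra critical points appear there; you should add this to your argument, noting that the maximum of $f|_{S^n_\epsilon}$ is automatically a positive critical point because $i < n+1$. As for the step you flag as hard --- exactly one new critical point at the $\d X$-end of the collar, of negative type and index $1$ --- the paper asserts this at the same level of detail as you do, so you are at parity with the source there; your suggestion to cross-check the alternating sum against the index formula is a reasonable consistency test but, as you note, does not by itself pin down the individual positive and negative counts.
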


\begin{proof} Let  $p_\star$ be a Morse singularity of $f$ in the interior of $X$. Denote by $S_{p_\star}$ a sphere which bounds a small disk $D_{p_\star}$ centered on $p_\star$ and such that $f|_{S_{p_\star}}$ is a Morse function. Without loss of generality, we can assume that, in the Morse coordinates $\{x_i\}$, $S_{p_\star}$ is given by $\sum_{i = 1}^{n +1} x_i^2 = 1$, while $f(x) =  \sum_{i = 1}^{n +1} a_i x_i^2$ with all the $\{a_i \neq 0\}$ being \emph{distinct}. Then $f|_{S_{p_\star}}$ has only Morse-type singularities at the points where the coordinate axes pierce the sphere $S_{p_\star}$. With respect to the pair $(X \setminus D_{p_\star}, f)$, these points come in two flavors: positive and negative. The two types are separated by the hypersurface of the cone $$C = \big\{\sum_{i = 1}^{n +1} a_i x_i^2 = 0\big\}.$$ In the vicinity of $p_\star$, the intersection $C \cap S_{p_\star}$ is exactly the locus $$\d_2(X \setminus D_{p_\star}) = \d_2^+(X \setminus D_{p_\star})$$, so that the $f$-gradient field $v$ (tangent to $S_{p_\star}$ along $C \cap S_{p_\star}$) is transversal to $C \cap S_{p_\star}$, the product of two spheres.  Therefore, in the vicinity of $x_\star$, $\d_3(X \setminus D_{p_\star}) = \emptyset$!

The function $f|_{S_{p_\star}}$ has exactly $2 \cdot i(p_\star)$ critical points of the positive type and exactly $2(n + 1 - i(p_\star))$ critical points of the negative type. We shall denote these sets by $\Sigma^{\pm}_1(p_\star)$ and the two domains in which $C$ divides $S_{p_\star}$---by $S^{\pm}_{p_\star}$. 

Let $x \in S^+_{p_\star}$ be a local maximum of $f|_{S_{p_\star}}$. Note that it is possible to connect  $x$ to a non-singular (for $f|_{\d X}$) point $y \in \d X$ by a smooth path $\g$ along which $f$ is increasing. Indeed, any non-extendable path $\g$ such that $df(\dot \g) > 0$ either approaches a critical point or reaches the boundary $\d X$. By a small perturbation, we can insure that $\g$ avoids all the (hyperbolic) critical points in the interior of $X$ (by the hypothesis, $f$ has no local maxima/minima in the interior of $X$). Thus $\g$ can be extended until it reaches the boundary $\d X$ at a point $y$. 

Drilling a narrow tunnel $U$ diffeomorphic to the product $\g \times D^n$ along $\gamma$ does not change the topology of $X$; the function $f|_{X \setminus U}$ retains almost the same list of singularities at the boundary as the function $f|_{X \setminus D_{p_\star}}$ has:  more accurately, the local maximum at $x \in S^+_{p_\star}$ disappears in $\d(X \setminus U)$ and a \emph{negative} critical point of index 1 of $f|_{\d(X \setminus U)}$ appears near the $y$-end of the tunnel $U$. Thus we have modified $f$ and have eliminated the critical point $p_\star$ in the interior of $X$ at the cost of introducing on the boundary $2(n + 1 - i(p_\star)) -1$ critical points of positive type and $2i(p_\star) + 1$ critical points of negative type. 
\end{proof}

Soon, motivated by Lemma \ref{lem3.2}, we will restrict our attention to \emph{nonsingular} functions $f: X \to \R$ and their gradient-like fields $v$---an open subset in the space of all gradient-like pairs $(f, v)$; but for now, let us investigate a more general case.
\smallskip

Consider Morse data $(f, v)$, where the field $v$ is nonsingular along the boundary $\d_1X$. Extend $(f, v)$ to $\hat X: = X \cup C$ and $\hat v$, where $C$ is some external collar of  $\d_1X$ so that the extension $(\hat f, \hat v)$ is nonsingular in $C$.  At each point $x \in \d_1X$, the $\hat v$-flow defines a projection $p_x$ of the germ of $\d_1X$ into the germ of the hypersurface $\hat f^{-1}(\hat f(x))$. 

Let $\d_jX^\circ$ and $\d_j^\pm X^\circ$ denote the pure strata $\d_jX \setminus \d_{j+1}X$ and $\d_j^\pm X \setminus \d_{j+1}X$, respectively.  At the points $x \in \d_1X^\circ$, $p_x$ is a surjection; at the points of $x \in \d_2X^\circ$,  it is a folding map; at the points $x \in \d_3X^\circ$, it is a cuspidal map. Often we will refer to points $x \in \d_1 X$  by the smooth types  of their $p_x$-projections.
\smallskip

As the theorem and the corollary below testify, for a given function $f: X \to \R$, we enjoy a considerable freedom in changing the given Morse stratification $\{\d_j^+X := \d_j^+X(v)\}$ by  deforming  the $f$-gradient-like field $v$ (cf. Section 3 in \cite{K}).  

\begin{thm}\label{th3.2} Let $X$ be a compact smooth $(n + 1)$-manifold with nonempty boundary.  Take a smooth function $f: X \to \R$ with no singularities along  $\d X$, and let $v$ be its gradient-like field. Consider a stratification $$X := Y_0 \supset Y_1 \supset Y_2 \supset \dots \supset Y_{n + 1}$$ of $X$ by compact smooth manifolds $\{Y_j\}$, and let $S_j$ and $S_j^\d$ denote the critical sets of the restrictions $f|_{Y_j}$ and $f|_{\d Y_j}$, respectively. Assume that the following properties are satisfied:
\begin{itemize}
\item  $\dim(Y_j) = n + 1 - j$,  
\item $Y_1 \subset \d X$ and $\{Y_j \subset \d Y_{j -1}\}$ are regular embeddings for all $j \in [2, n+1]$,  
\item for each $j \leq n + 1$ the functions $f|_{Y_j}$ and $f|_{\d Y_j}$ have Morse-type critical points at the loci $S_j$ and $S_j^\d$, respectively, 
\item at the points of $S_j$, $df(\nu) > 0$ and, at the points of $S_{j - 1}^\d \setminus S_j$,  $df(\nu) < 0$, where 
$\nu$ is the inward normal to $\d Y_{j-1}$ in $Y_{j-1}$\footnote{This condition is metric-independent: it does not depend on the choice of $\nu$.}.
\end{itemize}
Then, within the space of $f$-gradient-like fields, there is a deformation of $v$  into a new  boundary generic gradient-like field $\tilde v$, such that the stratification $\{\d_j^+X(\tilde v)\}_{0 \leq j \leq n+1}$, defined by $\tilde v$, coincides with the given stratification $\{Y_j\}_{0 \leq j \leq n + 1}$. 
\end{thm}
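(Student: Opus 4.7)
The plan is to modify $v$ only inside a collar of $\d X$, leaving it unchanged in the interior. Fix a product collar $U \cong \d X \times [0, \e)$ with inward coordinate $t$, and write any field near $\d X$ in the form $\tilde v = a(x,t)\, \d_t + w(x,t)$, where $w$ is tangent to the slices $\d X \times \{t\}$. The tangency locus of $\tilde v$ to $\d X$ is exactly $\{a(\cdot, 0) = 0\}$, while the sign of $a|_{\d X}$ distinguishes $\d_1^+ X$ from $\d_1^- X$. To realize $Y_1 = \d_1^+X(\tilde v)$ and $\d Y_1 = \d_2 X(\tilde v)$, I would choose $a|_{\d X}$ to be any smooth function whose zero set is exactly $\d Y_1$, positive on $\textup{int}(Y_1)$ and negative on $\d X \setminus Y_1$; such a function exists because $\d Y_1$ is a regularly embedded two-sided hypersurface of $\d X$ separating $Y_1$ from its complement.

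The key reduction is that, once $a$ has been fixed, the full stratification $\{\d_j^+X(\tilde v)\}_{j \geq 2}$ is governed by the tangential field $w := w(\cdot, 0)$, viewed as a vector field on $\d X$. Indeed, $\tilde v$ is tangent to $\d Y_1$ at $p \in \d Y_1$ precisely when $w(p)$ is tangent to $\d Y_1$ inside $Y_1$, and iterating shows that the strata $\d_j^+X(\tilde v)$ for $j \geq 2$ agree with those induced by $w|_{Y_1}$ on the manifold-with-boundary $Y_1$ (shifted by one level in the indexing). Thus it suffices to produce an $f|_{Y_1}$-gradient-like field $w^+$ on $Y_1$ whose Morse stratification is $Y_2 \supset Y_3 \supset \dots \supset Y_{n+1}$, and then to extend $w^+$ smoothly to an $f|_{\d X}$-gradient-like field $w$ on all of $\d X$.

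I would carry this out by induction on $n$. The base case $n=0$ is trivial: a 1-manifold needs only a gradient-like field whose signs on the isolated boundary points match the prescribed partition, which $f$ itself provides. For the inductive step, the $n$-dimensional problem on $Y_1$ satisfies exactly the hypotheses of the statement one dimension down, yielding the desired $w^+$ on $Y_1$. To extend across $\d Y_1$, pick any $f|_{\d X \setminus Y_1}$-gradient-like field $w^-$ on the complementary region of $\d X$ and glue $w^+$ with $w^-$ via a partition of unity supported in a small tubular neighborhood of $\d Y_1$; because both fields are tangent to $\d Y_1$ along it, the glued field $w$ is smooth, and the induced stratification on $Y_1$ is unaffected. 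Finally set $\tilde v := a(x,t)\, \d_t + w(x)$ on the collar, interpolate smoothly to $v$ outside, and verify $df(\tilde v) > 0$: at each $p \in S_1^\d$ we have $w(p) = 0$, so $df(\tilde v)(p) = a(p) \cdot df(\nu)(p)$, and the hypothesis that $a$ and $df(\nu)$ share a common sign on $S_1$ and on $S_1^\d \setminus S_1$ yields positivity. Elsewhere, continuity and a small choice of $\e$ preserve $df(\tilde v) > 0$, and a final small perturbation (Theorem~\ref{th2.1}) secures boundary genericity.

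The main obstacle I expect is the coherent gluing across $\d Y_1$: the extended field $w$ must be simultaneously gradient-like for $f|_{\d X}$, tangent to $\d Y_1$ along it, and induce the correct stratification of $Y_1$ without introducing spurious tangencies from the $\d X \setminus Y_1$ side. A secondary concern is the compatibility of the inductive hypothesis at each step with the sign conditions $df(\nu) > 0$ on $S_k$ and $df(\nu) < 0$ on $S_{k-1}^\d \setminus S_k$; these must be tracked through the recursion so that every lower-level gradient-like condition is preserved, and they are in fact exactly what the quadruple of hypotheses of the theorem supplies.
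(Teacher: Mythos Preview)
Your inductive scheme (descending induction on the dimension of the ambient manifold) is genuinely different from the paper's, which instead climbs the stratum index $k$ inside the fixed manifold $X$: at each step the paper leaves the components $n_j = v_{j-1} - v_j$ for $j < k$ untouched and only adjusts the normal component of $v$ along $\d_kX$ inside $\d_{k-1}X$, using the sign hypotheses on $S_k$ and $S_{k-1}^\d$ to guarantee that the open cones $C_x^\pm = T_x^\pm \cap T_x^+(f)$ are nonempty. Your dimension-reduction picture is attractive and can be made to work, but two steps as written do not go through.

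First, the gluing of $w^+$ and $w^-$ across $\d Y_1$ is not benign. Your justification ``both fields are tangent to $\d Y_1$ along it'' is false: $w^+$ is tangent to $\d Y_1$ only along $\d_2 Y_1(w^+) = \d Y_2$, and elsewhere has a nonzero normal component whose sign is precisely what encodes $\d_1^+ Y_1(w^+) = Y_2$. A partition-of-unity average with an uncontrolled $w^-$ near $\d Y_1$ will perturb that normal component and can destroy the equality $\d_2^+X(\tilde v) = Y_2$. The fix is to extend $w^+$ slightly across $\d Y_1$ into the complementary region and perform the convex interpolation entirely on the $\d_1X \setminus Y_1$ side, so that $w|_{Y_1} = w^+$ exactly; this preserves the stratification on $Y_1$ while still producing a global $f|_{\d_1X}$-gradient-like field. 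Second, your check of $df(\tilde v) > 0$ misidentifies the vanishing locus of $w$: the field $w$ (being $f|_{\d_1X}$-gradient-like) vanishes on $S_0^\d = \Sigma_1$, not on $S_1^\d$, so the sign analysis must be done at points of $\Sigma_1 \cap Y_1 = S_1$ and $\Sigma_1 \setminus Y_1 = S_0^\d \setminus S_1$, which is exactly where the $j=1$ case of the last bullet applies. Away from $\Sigma_1$ you have $df(w) > 0$, but $a\cdot df(\nu)$ could be large and negative, so $a$ cannot be ``any'' function with the correct sign; you must take $|a|$ small enough outside a neighborhood of $\Sigma_1$ to keep $a\,df(\nu) + df(w) > 0$. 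The paper's cone argument handles both issues simultaneously and without a dimension induction.
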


\begin{figure}[ht]
\centerline{\includegraphics[height=2in,width=3in]{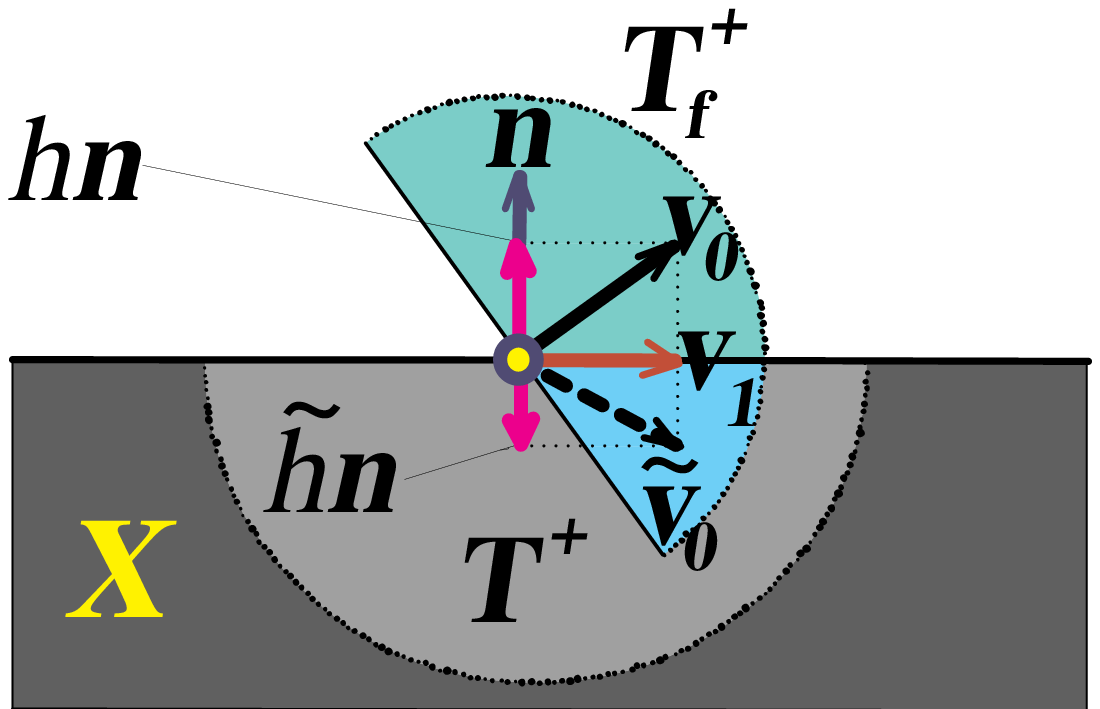}} 
\bigskip
\caption{}
\end{figure}

\begin{proof} 

We pick a Riemannian metric $g$ in a collar $U$ of $\d X$ in $X$ so that $v$ becomes the gradient field of $f$.  Consider auxiliary vector fields $\{v_j\}$, where $v_j$ denotes the orthogonal projection of $v$ on the tangent spaces of  closed manifold $\d Y_{j-1}$.

The construction of the desired field $\tilde v$ is inductive in nature, the induction being executed in increasing values of the index $k$. Fig. 4 illustrates a typical inductive step.

Assume that $v := \tilde v$ has been already constructed so that $\d_j^+X(v) = Y_j$ and $\Sigma_j^+(v) = S_j$ for all $j < k$. This assumption implies that  $v$ is tangent to $Y_j$ exactly along its boundary $\d Y_j$ for all $j < k$. Along $\d Y_{k - 1} = \d(\d_{k-1}^+X(v)) = \d_k X(v)$ (and thus along $Y_k \subset \d Y_{k - 1}$), we decompose $v$ as $v_k + \sum_{j = 0}^{k - 1} n_j$, where $n_j := v_{j - 1} - v_j$. 

The idea is to modify $v$ in the direction normal to $\d_kX(v)$ in $\d_{k-1}X(v)$, while keeping the rest of  its components $\{n_j\}$ unchanged.  

Denote by $T_x$ the tangent space of $Y_{k-1}$ at $x \in \d Y_{k-1}$. Let $T_x^+$ be the open half-space of $T_x$ positively spanned by the vectors that point inside of $Y_{k-1}$.  Let $T_x^+(f)$ be half  of the tangent space $T_x$, defined by $df(u) > 0$, where $u \in T_x$.  We introduce the complementary to $T_x^+$ and $T_x^+(f)$ open half-spaces $T_x^-$ and  $T_x^-(f)$.  

At each point $x \in Y_k$, consider the open cone $C_x^+ = T_x^+ \cap T_x^+(f)$ and, at each point $x \in \d Y_{k-1}\setminus Y_k$, the open cone $C_x^- = T_x^- \cap T_x^+(f)$ (see Fig. 4). These cones are non-empty, except perhaps at the points of $S_{k-1}^\d$, where $\pm v_{k - 1}$ is anti-parallel to the inward normal $\nu_k$ of $\d Y_{k-1} \subset Y_{k-1}$. However, at  $x \in S_k$, $C_x^+ \neq \emptyset$, and  at $x \in S^\d_{k-1} \setminus S_k$, $C_x^- \neq \emptyset$ due to the last bullet in the hypotheses of the theorem. Thus, for each $x \in Y_k^\circ$, there is a number $h$ so that the vector $u_k =  v_k + h\cdot \nu_k \in C_x^+$ (this conclusion uses the the property $df(v_k) > 0$ on the set $Y_k \setminus S_k$). Similarly,  for each $x \in \d Y_{k-1} \setminus Y_k$, there is a number $h$ so that $u_k  \in C_x^-$.  By the partition of unity argument, which employes convexity of the cones $C_x^\pm$,  there is a smooth function $h: \d Y_{k-1} \to \R$ which delivers the desired field $u_k$ along $\d Y_{k}$.  In order to insure the continuity of $h$ and $u_k$ across the boundary $\d Y_k \subset \d Y_{k - 1}$,  we require  $h|_{\d Y_k} = 0$.  Thus $u_k = v_k \neq 0$ on $\d Y_k$.

Put $v' = u_k + \sum_{j = 0}^{k -1} n_j $.  Now, $\d_j^+X(v') = \d_j^+X(v) = Y_j$ for all $j < k$ (these strata depend on the $n_j$'s only), and $\d_k^+X(v') = Y_k$ by the construction of $u_k$. Moreover, $\Sigma_j^+(v') =  \Sigma_j^+(v) = S_j$ for all $j \leq k$. In fact, $v'$ is tangent to $Y_{k-1}$ along $\d Y_{k-1}$. Note that this inductive argument should be modified for $k = n + 1$ since   $Y_{n+1} = S_{n+1}$ is 0-dimensional.

We smoothly extend $v'$ into a regular neighborhood $V$ of $\d Y_{k-1}$ in $X$. Abusing notations, we denote this extension by $v'$ as well. The neighborhood $V$ is chosen so that there $df(v') > 0$.

To complete the proof of the inductive step $k - 1 \Rightarrow k$, we form the field $\tilde v := \psi_0 v + \psi_1 v'$, where the functions $\{\psi_0, \psi_1\}$ deliver a smooth partition of unity subordinate to the cover $\{X \setminus V, V\}$ of $X$. Since $df(\sim) > 0$ defines a convex cone in the space of vector fields, $\tilde v$  is a $f$-gradient-like field with the desired Morse  stratification.
\end{proof}

Theorem \ref{th3.2} has an immediate implication:

\begin{cor}\label{cor3.2} Let $f: X \to \R$ be a Morse function and $v$ its boundary generic gradient-like field with the Morse stratification $\{\d_j^+X(v) \subset \d_jX(v)\}_{0 \leq j \leq n+1}$. Assume that compact codimension zero submanifolds  $Y_j \subset \d_j X$ are chosen so that, for each $j$,   $Y_j \supset \Sigma_j^+(v)$ and $Y_j \cap \Sigma_j^-(v) = \emptyset$.

Then, within the space of $f$-gradient-like fields,  it is possible to deform $v$  into a new gradient-like boundary generic field $\tilde v$, such that the stratification $\{\d_j^+X(\tilde v)\}_{0 \leq j \leq n +1}$  coincides with the given stratification $\{Y_j\}_{0 \leq j \leq n+ 1}$.   Moreover, $\{\d_jX(\tilde v) = \d_jX(v)\}_{0 \leq j \leq n + 1}$. 

In particular, if $\Sigma_k^+(v) = \emptyset$, the claim is valid for any stratification $\{Y_j\}_{0 \leq j \leq n +1}$ as above that terminates with $Y_{k} = \emptyset$. \hfill\qed
\end{cor}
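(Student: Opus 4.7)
The plan is to derive Corollary~\ref{cor3.2} as a direct application of Theorem~\ref{th3.2} to the chain $X = Y_0 \supset Y_1 \supset \dots \supset Y_{n+1}$ formed from the given $Y_j$'s (interpreting the hypothesis as implicitly including the nested compatibility $Y_j \subset \d Y_{j-1}$ with $\d Y_{j-1} = \d_j X(v)$). All the effort then reduces to verifying the four bulleted conditions of Theorem~\ref{th3.2}; once they are met, that theorem produces the required $\tilde v$, and the ``moreover'' clause follows from the explicit construction used in its proof.

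The first two bullets come essentially for free: $\dim Y_j = n + 1 - j$ since $Y_j$ is codimension zero in $\d_j X(v)$, and the regular embeddings $Y_j \subset \d Y_{j-1}$ are built into the nesting. The Morse condition on $f|_{Y_j}$ and $f|_{\d Y_j}$ is immediate from the boundary-generic Morse pair hypothesis (Definition~\ref{def2.4}): since $f|_{\d_j X(v)}$ is already Morse and $Y_j \subset \d_j X(v)$ is open, the restriction $f|_{Y_j}$ is also Morse. The key identification $S_j = \Sigma_j^+(v)$ then follows by combining $S_j \subset \Sigma_j(v) = \Sigma_j^+(v) \sqcup \Sigma_j^-(v)$ with the two inclusions $Y_j \supset \Sigma_j^+(v)$ and $Y_j \cap \Sigma_j^-(v) = \emptyset$; analogously, $S_{j-1}^{\d} \setminus S_j = \Sigma_j^-(v)$.

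For the remaining sign bullet, at any $x \in S_j = \Sigma_j^+(v)$ the projection $v_j$ of $v$ onto $T(\d_j X)$ vanishes, so $v(x)$ is normal to $\d_j X$ inside $\d_{j-1}X(v)$; by the definition of the ``$+$'' sign, $v(x)$ points into $\d_{j-1}^+X(v) = Y_{j-1}$, hence is a positive scalar multiple of the inward normal $\nu$ to $\d Y_{j-1}$ in $Y_{j-1}$. Since $df(v) > 0$, this yields $df(\nu) > 0$ at $S_j$; the mirror argument at points of $\Sigma_j^-(v) = S_{j-1}^{\d} \setminus S_j$ produces $df(\nu) < 0$. With all four bullets confirmed, Theorem~\ref{th3.2} delivers $\tilde v$, and the special case $\Sigma_k^+(v) = \emptyset$ amounts to terminating the induction at stage $k-1$ with $Y_k = \emptyset$.

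The ``moreover'' clause $\d_j X(\tilde v) = \d_j X(v)$ is not part of the bare conclusion of Theorem~\ref{th3.2}, but it can be read off from its proof: at the $k$-th inductive step only the normal component $v_k \mapsto u_k = v_k + h \nu_k$ is modified, while the transverse components $n_j$ that define the successive tangency loci $\d_j X$ are left untouched, so the unsigned strata are preserved verbatim. I expect the most delicate point to be precisely this compatibility---making sure the given $Y_j$'s can be arranged so that $Y_j \subset Y_{j-1}$ and $\d Y_{j-1} = \d_j X(v)$ with no spurious ``internal'' pieces of boundary---and then tracing through the proof of Theorem~\ref{th3.2} to confirm that its inductive perturbation leaves the unsigned stratification intact.
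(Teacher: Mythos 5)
Your proposal matches the paper exactly: the paper offers no separate argument for Corollary \ref{cor3.2}, presenting it as an immediate consequence of Theorem \ref{th3.2}, and your verification of the four bulleted hypotheses (the dimension count, the identification $S_j = \Sigma_j^+(v)$ and $S_{j-1}^\d \setminus S_j = \Sigma_j^-(v)$, and the sign condition $df(\nu) > 0$ at $\Sigma_j^+$ coming from $v$ being a positive multiple of the inward normal there) supplies precisely the routine checks the paper leaves implicit. Your reading of the hypotheses as implicitly including the nesting compatibility $Y_j \subset \d Y_{j-1}$ is also the correct (and only workable) interpretation of the statement, consistent with how the paper actually invokes the corollary level by level in Corollary \ref{cor3.3}.
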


The next proposition (based on Corollary \ref{cor3.2}) shows that, for a given Morse function $f: X \to \R$, by an appropriate choice of gradient-like field $v$, the Morse stratification $\d_j^+X$ can be made topologically very simple and regular: namely, each stratum $\d_j^+X$ is a disjoint union of $(n+1-j)$-dimensional disks. Moreover, when the boundary $\d_1X$ is connected and $j \in [1, n-1]$, each stratum $\d_j^+X$ is a just a single disk.

\begin{cor}\label{cor3.3} Let $f: X \to \R$ be a Morse function on a compact $(n + 1)$-manifold $X$, $f$ being nonsingular along the boundary $\d_1X$. We divide the connected components $\{\d_1X_\a\}_\a$ of the boundary into two types, $\mathsf A$ and $\mathsf B$.  By definition, for type $\mathsf A$, the singularity set $\Sigma^+_1(f) \cap \d_1X_\a \neq \emptyset$, and for type $\mathsf B$, $\Sigma^+_1(f) \cap \d_1X_\a = \emptyset$. 

Then any $f$-gradient-like field $v$ can be deformed, within the space of $f$-gradient-like fields, into a boundary generic field $\tilde v$ so that,  for each component $ \d_1X_\a$ of type $\mathsf A$ and all $j < n$, the  stratum $\d_j^+X(\tilde v) \cap \d_1X_\a$ is diffeomorphic to a disk $D^{n+1- j}$. At the same time, for the components of type $\mathsf B$  and all $j \geq 1$, the  stratum $\d_j^+X(\tilde v) \cap \d_1X_\a = \emptyset$.

For the components of type $\mathsf A$, in contrast, the $1$-dimensional stratum $\d_{n}^+X(\tilde v) \cap \d_1X_\a$ is a finite union of arcs residing in the circle $\d_nX(\tilde v) \cap \d_1X_\a$. 
Moreover, $\chi(\d_n^+X(\tilde v))$, the number or arcs in $\d_n^+X(\tilde v)$, and the number of points in $\d_{n + 1}^+X(\tilde v)$ are linked via the formula  
$$|\d_{n+1}^+X(\tilde v)| = \chi(\d_n^+X(\tilde v)) + (-1)^{n+1}[ \mathsf{Ind}(v) -\chi(X)] + \frac{m}{2}[(-1)^{n +1} -1]$$
, where $\mathsf{Ind}(v) = \mathsf{Ind}(\tilde v)$ is the index of the field $v$, and $m$ is the number of boundary components of type $\mathsf A$. \hfill\qed
\end{cor}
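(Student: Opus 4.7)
The plan is to use Theorem \ref{th3.2} to realize a very explicit ``hemisphere'' stratification of $\d_1X$ and then read off the counting formula from the Morse Law of Vector Fields (Corollary \ref{cor2.1}). Concretely, I would proceed component-by-component of $\d_1X$. On each type $\mathsf A$ component $\d_1X_\a$, pick a smoothly embedded closed disk $D^n_\a \subset \d_1X_\a$ whose interior contains the (finite) set $\Sigma^+_1(f) \cap \d_1X_\a$ and whose boundary sphere $S^{n-1}_\a$ is in generic position with respect to $f|_{\d_1X_\a}$, and set $Y_1 \cap \d_1X_\a := D^n_\a$. Inductively, given the $(n-j+1)$-disk $D^{n-j+1}_\a$ and its boundary sphere $S^{n-j}_\a$, choose a codimension-zero subdisk $D^{n-j}_\a \subset S^{n-j}_\a$ whose interior contains exactly those critical points of $f|_{S^{n-j}_\a}$ at which $\nabla f$ points into $D^{n-j+1}_\a$, with boundary sphere $S^{n-j-1}_\a$ in generic position. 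For $1 \leq j \leq n-1$ this produces nested disks; for $j = n$, $Y_n \cap \d_1X_\a$ is a finite union of arcs in the circle $\d_nX(\tilde v) \cap \d_1X_\a = S^1_\a$; and $Y_{n+1} \cap \d_1X_\a$ is their endpoint set. On type $\mathsf B$ components, set $Y_j \cap \d_1X_\a := \emptyset$ for all $j \geq 1$, which is consistent since by definition no positive critical points live there.

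Next, I would verify the hypotheses of Theorem \ref{th3.2} for this $\{Y_j\}$. Dimensions, nestedness, and regularity of embeddings hold by construction. The Morse conditions on $f|_{Y_j}$ and on $f|_{\d Y_j}$ can be imposed by small generic perturbations of each disk, using the open-and-dense property of Morse functions. The crucial sign conditions (namely $df(\nu) > 0$ on $S_j$ and $df(\nu) < 0$ on $S^\d_{j-1} \setminus S_j$) hold precisely because each hemisphere $D^{n-j}_\a$ was chosen so that the boundary critical points inside $D^{n-j}_\a$ are exactly those where $\nabla f$ points inward, while the excluded ones are those where it points outward. On type $\mathsf B$ components, vacuity makes the empty choice compatible. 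Theorem \ref{th3.2} then delivers an $f$-gradient-like field $\tilde v$ with $\d_j^+X(\tilde v) = Y_j$ for all $j$, and since $f$ is unchanged we have $\mathsf{Ind}(\tilde v) = \mathsf{Ind}(v)$.

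For the numerical formula, I observe that by construction $\chi(\d_j^+X(\tilde v)) = m$ for each $1 \leq j \leq n-1$ (a disjoint union of $m$ disks), $\chi(\d_n^+X(\tilde v)) = A$ equals the total number of arcs, $\chi(\d_{n+1}^+X(\tilde v)) = N := |\d_{n+1}^+X(\tilde v)|$, and $\chi(\d_0^+X) = \chi(X)$. Substituting these into Corollary \ref{cor2.1}:
\begin{eqnarray*}
\mathsf{Ind}(v) \;=\; \chi(X) \;+\; m \sum_{j=1}^{n-1} (-1)^j \;+\; (-1)^n A \;+\; (-1)^{n+1} N.
\end{eqnarray*}
Using the identity $\sum_{j=1}^{n-1}(-1)^j = \tfrac{(-1)^{n+1}-1}{2}$ and solving for $N$ yields precisely the claimed expression
\begin{eqnarray*}
N \;=\; A \;+\; (-1)^{n+1}\bigl[\mathsf{Ind}(v) - \chi(X)\bigr] \;+\; \tfrac{m}{2}\bigl[(-1)^{n+1} - 1\bigr].
\end{eqnarray*}

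The main obstacle is Step~2: ensuring that the inductive choice of hemispheres can be made so that at every level the bounding sphere $S^{n-j}_\a$ carries a Morse restriction of $f$ and its positive/negative critical points can be cleanly separated by a smoothly embedded codimension-zero subdisk whose boundary remains in generic position for the next stage. This is a transversality argument performed $n$ times in a row, with compatibility across levels, but each individual perturbation is an open-dense condition so the inductive scheme goes through.
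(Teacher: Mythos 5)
Your proposal is correct and follows essentially the same route as the paper: encapsulate the positive critical sets in nested disks (a union of arcs at the one-dimensional stage, nothing on type $\mathsf B$ components), realize this stratification via Theorem \ref{th3.2} (the paper invokes its Corollary \ref{cor3.2} iteratively, which amounts to the same construction), and then extract the counting formula from the Morse Law of Vector Fields using $\mathsf{Ind}(\tilde v)=\mathsf{Ind}(v)$ and $\sum_{j=1}^{n-1}(-1)^j=\tfrac{(-1)^{n+1}-1}{2}$. The arithmetic in your final step checks out and matches the paper's.
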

\begin{proof} If $n \geq 2$, for each type $\mathsf A$ connected component $\d_1X_\a$ of $\d_1X$, the singularity set $\Sigma_1^+(f) \cap \d_1X_\a$  can be included in a disk $D^n_\a \subset \d_1^+X(v)$. By Corollary \ref{cor3.2}, we can deform $v$ to a new $f$-gradient-like field $v'$, so that the new stratum $\d_1^+X(v') \cap  \d_1X_\a = D^n_\a$. If $n \geq 3$, then the singularity set $\Sigma^+(f|_{\d D^n_\a})$ can be incapsulated in a disk $D^{n-1}_\a$. By the same token, after still another deformation $v''$ of $v'$, we can arrange  for $\d_1^+X(v'') \cap  \d_1X_\a = D^n_\a$ and $\d_2^+X(v'') \cap  \d_1X_\a = D^{n-1}_\a$. This process repeats itself, unless the dimension of $\d_j^+X(\tilde v) \cap \d_1X_\a$ becomes one. At its final stage,  $\d_n^+X(\tilde v) \cap \d_1X_\a$ consists of several arcs   which are contained in the circle $\d D^2_\a$.

For each type $\mathsf B$ connected component $\d_1X_\a$ of $\d_1X$, by a similar reasoning, we can arrange for  $\d_1^+X(\tilde v)_\a = \emptyset$. Thus, $\d_j^+X(\tilde v) \cap \d_1X_\a = \emptyset$ for all $j \geq 1$ and $\a \in \mathsf B$.

Therefore, letting $Y_j = \coprod_{\a \in \mathsf A} D^{n+1-j}_\a$  for all $j \in [1, n-1]$ in Corollary \ref{cor3.2}, we have established all the claims of the corollary, but the last one.

Since $v$ and $\tilde v$ both are the gradient-like fields for the same Morse function $f$, their indexes, $\mathsf{Ind}(v)$ and $\mathsf{Ind}(\tilde v)$, are equal. Thus we get $$\mathsf{Ind}(v) = \mathsf{Ind}(\tilde v) = \chi(X) + \frac{m}{2}[(-1)^{n +1} -1] + (-1)^n\big[ \chi(\d_n^+X(\tilde v)) - \chi(\d_{n+1}^+X(\tilde v))\big]$$, where $\frac{m}{2}[(-1)^{n +1} -1]$ is the contribution of all the disk-shaped strata $\{\d_j^+X(\tilde v)\}_{1 \leq  j  < n}$ to the Morse formula \ref{eq2.2}. 
\end{proof}
\smallskip

Recall that, by Corollary 4.4 \cite{K}, for any 3-fold $X$ and a  boundary generic field $v \neq 0$ on it, we get $|\d_3^+X(v)| \geq 2\chi(X) - 2$, provided $\d_1^+X(v) \approx D^2$. Thus, as a positive $\chi(X)$ increases, the boundary of the disk $\d_1^+X(v)$ becomes more ``wavily". 

If $X$ is the Poincar\'{e} homology $3$-sphere with a $3$-ball being deleted, then by Corollary 4.4 \cite{K}, $|\d_3^+X(v)| > 0$ for any gradient-like field $v \neq 0$ such that $\d_1^+X(v) \approx D^2$.
\smallskip

These examples motivate 
\smallskip

\noindent {\bf Question 3.1} For boundary generic gradient-like fields $v$ with a fixed value $i$ of the index $\mathsf{Ind}(v)$ and a disk-shaped stratification $\{\d_j^+X(v)\}_{1 \leq  j  < n}$ as in Corollary \ref{cor3.3}, what is the minimum $\mu(X, i)$ of $|\d_{n + 1}^+X(v)|$?  \hfill\qed

\smallskip

Evidently, such number $\mu(X, i )$ is an invariant of  the diffeomorphism type of $X$. It seems that $\mu(X, i)$ is semi-additive under the connected sum operation: that is, $$\mu(X_1 \# X_2, i_1 + i_2) \leq \mu(X_1, i_1) + \mu(X_2, i_2).$$ 

\section{Boundary Convexity and Concavity of Vector Fields}

We are ready to introduce pivotal concepts of the stratified convexity and concavity for smooth vector fields on manifolds with boundary.

\begin{defn}\label{def4.1} Given a boundary generic vector field $v$ (see Definition \ref{def2.1}), we say that $v$ is boundary $s$-\emph{convex}, if $\d_s^+X = \emptyset$. In particular, if $\d_2^+X = \emptyset$, we say that $v$ is boundary $2$-convex, or just \emph{boundary convex}.  

We say that $v$ is boundary  $s$-\emph{concave}, if $\d_s^-X = \emptyset$. In particular, if $\d_2^-X = \emptyset$, we say that $v$ is boundary $2$-concave, or just \emph{boundary concave}. 
 \hfill \qed
\end{defn}

\noindent {\bf Example 4.1.} Assume that a compact manifold $X$ is defined as a 0-dimensional submanifold in the interior of a Riemannian manifold $Y$, given by an inequality $\{x: h(x) \geq 0\}$, where  $h: Y \rightarrow \R$ is a smooth function with 0 being a regular value.  Then the boundary convexity of a gradient field $v := \nabla f$  in $X$ can be expressed in terms of the Hessian matrix $Hess(h)$ by the inequality $$\langle Hess_x(h)v(x), \; v(x) \rangle < 0$$ at all points $x$, where $v(x)$ is tangent to $\d X$. If $$\langle Hess_x(h)v(x),\; v(x) \rangle >  0$$,  where $v(x)$ is tangent to $\d X$, then the field $v$ is boundary concave. \hfill \qed
\smallskip

\noindent {\bf Example 4.2.} According to the argument in Lemma \ref{lem3.3}, the complement to a small convex (in the Morse coordinates) disk, centered at a Morse type $f$-critical point, is \emph{boundary concave}  with respect to the gradient field $v = \nabla f$. In fact, the field $v$ is both boundary $3$-concave and $3$-convex! So, if $f: Y \to \R$ is a Morse function on a \emph{closed} manifold $Y$ with a critical set $\Sigma$, then the complement $X$ in $Y$ to a small locally convex neighborhood of $\Sigma$ admits a boundary concave $f|_{X}$-gradient-like field (with $\d_3X = \emptyset$)! 
\hfill \qed
\smallskip

Theorem \ref{th4.1} below belongs to a family of results which we call ``\emph{holographic}" (see also and Theorem \ref{th4.2}). The intension in such results is to reconstruct some structures on the ``bulk" $X$ (or even the space $X$ itself) from the appropriate flow-generated structures (``observables") on its boundary $\d X$. A paper from this series will be devoted entirely to the phenomenon of holography for nonsingular gradient flows. 

In Theorem \ref{th4.1},  we describe how some \emph{boundary-confined} interactions between the critical points of a given function $f: \d_1X \to \R$ of opposite polarities can serve as an indicator of the convexity/concavity of the gradient field $\nabla f$ in $X$ (recall that the convexity/concavity properties of the $v$-flow do require knowing the field in the \emph{vicinity} of $\d_1X$ in $X$!). 

\begin{thm}\label{th4.1} Let $f: X \rightarrow \R$, $f|: \d_1X \to \R$ be Morse functions and $v$ and $v_1$ their gradient  fields with respect to a Riemannian metric $g$ on $X$ and its restriction to $\d_1X$, respectively. Assume that $v$ is boundary generic.  

If $\d_2^\pm X(v) = \emptyset$, then there is no \emph{ascending} $v_1$-trajectory $\g: \R \to \d _1X$, such that $$\lim_{t \rightarrow -\infty} \g(t) \in \Sigma_1^\mp \;\; \text{and} \; \; \lim_{t \rightarrow +\infty} \g(t) \in \Sigma_1^\pm$$ (both critical sets $\Sigma_1^\pm$ depend only on $f$).

Conversely, if for a given $f$-gradient pair $(v, v_1)$, no such $v_1$-trajectory $\g \subset \d_1X$ exists, then one can deform $(v, v_1)$  to a new boundary generic pair $(\tilde v, \tilde v_1)$ of the $f$-gradient type so that $\d_2^\pm X(\tilde v) = \emptyset$. Moreover, the fields $v_1$ and $\tilde v_1$ on $\d_1X$ can be chosen to be arbitrary close in the $C^\infty$-topology. 

In particular, if $f(\Sigma_1^+) < f(\Sigma_1^-)$ (as sets),  then $X$ admits a boundary generic and convex $f$-gradient-like field $\tilde v$; similarly, if $f(\Sigma_1^+) > f(\Sigma_1^-)$,  then $X$ admits a boundary generic and concave $f$-gradient-like field $\tilde v$.
\end{thm}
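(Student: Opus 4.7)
Assume $\d_2^+X(v) = \emptyset$; the concave case is symmetric. Every tangency point $y \in \d_2X$ then lies in $\d_2^-X$, so $v_1(y) = v(y)$ points out of $\d_1^+X$ across the interface $\d_2X$. Hence the closed set $\d_1^-X$ is positively invariant under the $v_1$-flow on $\d_1X$: any trajectory crossing $\d_2X$ transversally can only go from the $\d_1^+X$ side to the $\d_1^-X$ side. Since $\Sigma_1^\pm$ lie in the interiors of $\d_1^\pm X$ respectively, an ascending $v_1$-orbit $\g$ with $\g(-\infty) \in \Sigma_1^-$ enters $\d_1^-X$ immediately and is trapped there, so it cannot reach $\Sigma_1^+$.

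\textbf{Converse.} Suppose no ascending $v_1$-orbit connects $\Sigma_1^-$ to $\Sigma_1^+$. I modify only the \emph{normal} component of $v$ along $\d_1X$; this preserves $v_1$ exactly, which gives the $C^\infty$-closeness claim for free. The crux is to produce a smooth function $H : \d_1X \to \R$ with $H > 0$ on $\Sigma_1^+$, $H < 0$ on $\Sigma_1^-$, having $0$ as a regular value and satisfying $dH(v_1) < 0$ off $\Sigma_1$. Such an $H$ is obtained from the classical Morse rearrangement theorem for gradient-like flows (see, e.g., Milnor's lectures on the $h$-cobordism theorem): starting from $-f|_{\d_1X}$, the hypothesis on orbits is exactly the compatibility condition needed to reshuffle the critical values of $f|_{\d_1X}$ so that all of $\Sigma_1^+$ is pushed into $\{H > 0\}$ and all of $\Sigma_1^-$ into $\{H < 0\}$, with $v_1$ remaining a descent field for $H$.

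\textbf{Construction of $\tilde v$ and conclusion.} Pick a collar $U \cong \d_1X \times [0,\delta)$ of $\d_1X$ in $X$ with inward unit normal $\nu$, extend $v_1$ and $H$ to $U$ to be $t$-independent, and set $\tilde v := v_1 + \epsilon H \nu$ on $U$ for a small $\epsilon > 0$; outside $U$ patch $\tilde v$ with the original $v$ via a partition of unity (the $f$-gradient-like condition $df(\,\cdot\,) > 0$ cuts out a convex cone, so interpolation preserves it). A direct computation $df(\tilde v) = |v_1|^2 + \epsilon H\, df(\nu)$ shows positivity: at $p \in \Sigma_1^+$ one has $v_1(p) = 0$ and $df(\nu)(p) > 0$ (because originally $v(p) = n(p)\nu(p)$ with $n(p) > 0$ and $df(v)(p) > 0$), while $H(p) > 0$, so the second summand is positive; symmetrically at $\Sigma_1^-$; and away from $\Sigma_1$ the term $|v_1|^2$ is bounded below and dominates. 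By construction $\d_1^+X(\tilde v) = \{H > 0\}$, $\d_2X(\tilde v) = \{H = 0\}$, and $\tilde v_1 \cdot \nabla H = dH(v_1) < 0$ on this zero set, which says $\tilde v_1$ points out of $\d_1^+X(\tilde v)$ at every tangency, i.e.\ $\d_2^+X(\tilde v) = \emptyset$. A final $C^\infty$-small generic perturbation restores full boundary genericity without upsetting convexity. The concave case is obtained by flipping the sign of $H$. Finally, when $f(\Sigma_1^+) < f(\Sigma_1^-)$ as sets, $f$ strictly increases along ascending $v_1$-orbits, so the hypothesis of the converse is automatic and boundary convexity follows (concavity symmetrically under $f(\Sigma_1^+) > f(\Sigma_1^-)$).

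\textbf{Main obstacle.} The delicate point is the Morse rearrangement step: showing that the single constraint ``no ascending orbit from $\Sigma_1^-$ to $\Sigma_1^+$'' suffices to realize a complete separation of critical values, even when $\Sigma_1^+$ and $\Sigma_1^-$ are interlinked through chains of intermediate trajectories among critical points of the same polarity. This is a classical but non-trivial argument that must be executed via a successive level-by-level rearrangement consistent with the chain-recurrence structure of $v_1$, rather than by a single global move.
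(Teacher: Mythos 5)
Your proposal is correct, and in the converse direction it takes a genuinely different route from the paper. The forward direction is the paper's argument read contrapositively: when $\d_2^+X(v)=\emptyset$ every tangency is negative, so $\d_1^-X(v)$ is forward-invariant for the $v_1$-flow (the paper instead exhibits the crossing point of a hypothetical connecting orbit as a point of $\d_2^+X(v)$); both versions use that $\d_2^+X(v)=\emptyset$ forces $\d_3X(v)=\emptyset$, so $v_1=v$ is transversal to $\d_2X(v)$. For the converse, the paper constructs the separator $N$ directly: it takes small Morse disks around the points of $\Sigma_1^+$, saturates them by descending $v_1$-trajectories, shows the resulting domain $A_\e$ misses $\Sigma_1^-$ for small $\e$ and has a piecewise-smooth boundary along which $v_1$ is tangent or outward-pointing, and then perturbs $v_1$ and smooths the corners. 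You instead manufacture a Lyapunov-type function $H$ with $dH(v_1)<0$ off $\Sigma_1$ and $H\gtrless 0$ on $\Sigma_1^{\pm}$, and take $N=H^{-1}(0)$; the ensuing normal-component surgery $\tilde v=v_1+\e H\nu$ and the convex-cone argument for $df(\tilde v)>0$ coincide with the paper's conditions (1)--(4), with the bonus that $\tilde v_1=v_1$ exactly and no corner-smoothing is needed. The cost is the rearrangement lemma, which you rightly flag as the real content. Two points there need to be supplied: (i) Milnor's rearrangement theorem is stated for a pair of critical points with \emph{no} connecting orbit in either direction, so you must iterate it along the partial order generated by connecting $v_1$-orbits rather than apply it in one global move; and (ii) the consistency condition for that iteration concerns the \emph{transitive closure} of the orbit relation, i.e.\ broken chains, not just single orbits. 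Fortunately the hypothesis does rule out broken ascending chains from $\Sigma_1^-$ to $\Sigma_1^+$: in any such chain the first arrow whose head lies in $\Sigma_1^+$ has its tail in $\Sigma_1^-$ and is itself a forbidden single orbit. (This is the same compactness fact the paper needs to guarantee that $A_\e$ avoids $\Sigma_1^-$ for small $\e$.) Chains confined to one polarity class are harmless, since any finite acyclic relation admits a linear extension with values of the prescribed sign. With that check written out, your proof is complete.
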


\begin{proof} First consider the convex case, that is,  the relation between the property $\d_2^+ X(v) = \emptyset$ and the absence of an ascending $v_1$-trajectory $\g: \R \to \d _1X$ which connects  $\Sigma_1^- $ to $\Sigma_1^+$.

Consider the function $h: \d_1 X \rightarrow \R$, defined via the formula $v = v_1 + h \cdot n$, where $n$ denotes a unitary field inward normal  to $\d_1X$ in $X$. Since $v$ is boundary generic, zero is a regular value of $h$.  Then $$\d^+_1X(v) = h^{-1}([0, +\infty)), \quad \d^-_1X(v) = h^{-1}((-\infty, 0])$$ and $\d_2X(v) = h^{-1}(0)$. 

\begin{figure}[ht]\label{fig.5}
\centerline{\includegraphics[height=1.7in,width=3.8in]{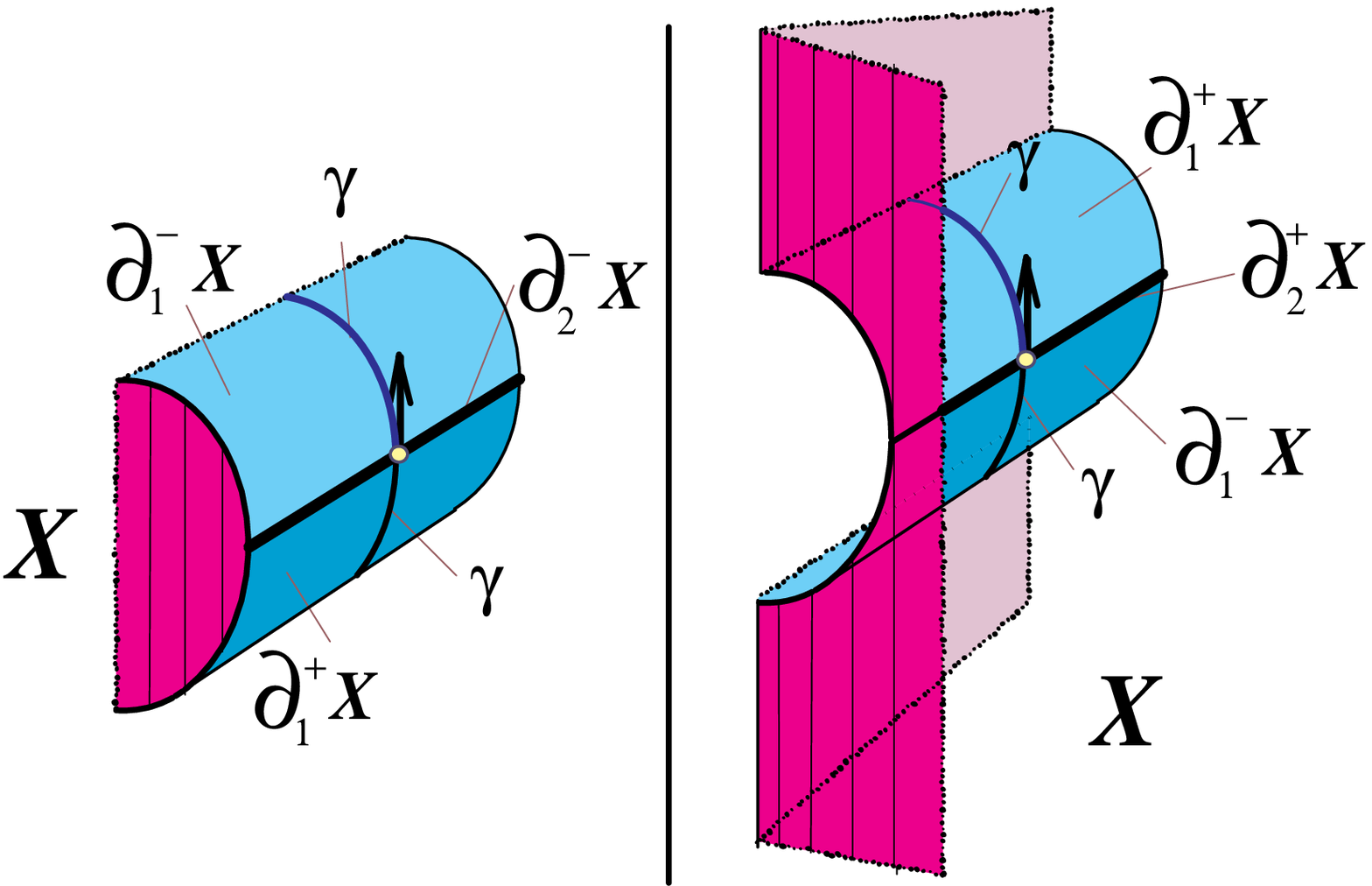}} 
\bigskip
\caption{}
\end{figure}

If an ascending $v_1$-trajectory $\g \subset \d_1X$, which links  $\Sigma_1^-$ with $\Sigma_1^+$, does exist, it must cross somewhere the boundary $\d_2X(v)$ of $\d^-_1X(v)$. Since the field $v_1$ is an orthogonal projection of $v$ on $T(\d_1X)$, the two fields must agree at any point  $x \in \d_2X(v)$---the locus where $v$ is tangent to $\d_1X$. Thus, $v_1$ is the gradient of $f$ at $x \in \g \cap \d_2X(v)$. Therefore, as $\g(t)$ crosses from $\d_1^+X(v)$ into $\d_1^-X(v)$ at $x$, in its vicinity, the arc $\g \cap\d_1^-X(v)$ lies \emph{below} the arc $\g \cap \d_1^+X(v)$ (see Fig. 5). By the definition of the locus $\d_2^+X(v)$, such crossing $x \in \g \cap\d_2X(v)$ belongs to $\d_2^+X(v)$. Therefore, $\d_2^+X(v) \neq \emptyset$, contrary to the theorem hypothesis. 

On the other hand, if no such $v_1$-trajectory $\g$ exists, then we claim the existence of a codimension one closed  submanifold $N \subset \d_1X$, which separates $\d_1X$ in two manifolds, $A \supset \Sigma_1^+$ and $B \supset \Sigma_1^-$ ($\d A = N = \d B$), such that the vector field $v_1$, or rather its perturbation $\tilde v_1$, is \emph{transversal} to $N$ and points \emph{outward} of $A$. Indeed, for each critical point $x \in \Sigma_1^+$, in the local Morse coordinates $(y_1, \dots, y_n)$ on $\d_1X$, consider a small closed $\e$-disk $D_\e^n(x) =\{\sum_k y_k^2 = \e^2 \}$ centered on the critical point $x$.  Denote by $U_\e(x) \subset \d_1X$ the closure of the union of \emph{downward}  trajectories of the $v_1$-flow passing through the points of $D_\e^n(x)$ (see Fig. 6, the left diagram). Let $A_\e$ be the union $\cup_{x \in \Sigma_1^+}\,  U_\e(x)$ (see Fig. 6, the right diagram). 

Since we assume that no descending  $v_1$-trajectory $\g$ links a point of  $\Sigma_1^+$ to a point of $\Sigma_1^-$, we can choose the disks $\{D_\e^n(x)\}_{x \in \Sigma_1^+}$ so small that the set $\Sigma_1^-$ belongs to the complement $\d_1X \setminus A_\e$. 

For each $x \in \Sigma_1^+$, the zero cone $\{Hess_x(f|_{\d_1X}) = 0\}$ of the Morse function $f|_{\d_1X}$ separates the sphere $\d D_\e^n(x)$ into two handles, $H_\e^-(x)$ and $H_\e^+(x)$ (each being a product of a sphere with a disk). We denote by $H^-(x)$ the handle in $\d D_\e^n(x)$ whose spherical core is formed by the intersection of the unstable disk through $x$ with the sphere $\d D_\e^n(x)$. Then, by definition, the set $U_\e(x)$ is a collection of downward trajectories through the points of $H^-(x)$ union with $D_\e^n(x)$. Note that the downward trajectories from a different set $U_\e(y)$ could enter the disk $D_\e^n(x)$ only through the complementary handle $H^+(x) := \d D_\e^n(x) \setminus H^-(x)$ in its boundary. As a result, $U_\e(x) \cup U_\e(y)$ is a manifold whose piecewise smooth boundary could have corners (see Fig. 6, the right diagram)  Similarly, $A_\e$ is a domain in $\d_1X$ whose boundary is  piecewise smooth manifold with corners. 

\begin{figure}[ht]\label{fig.5}
\centerline{\includegraphics[height=4in,width=5.5in]{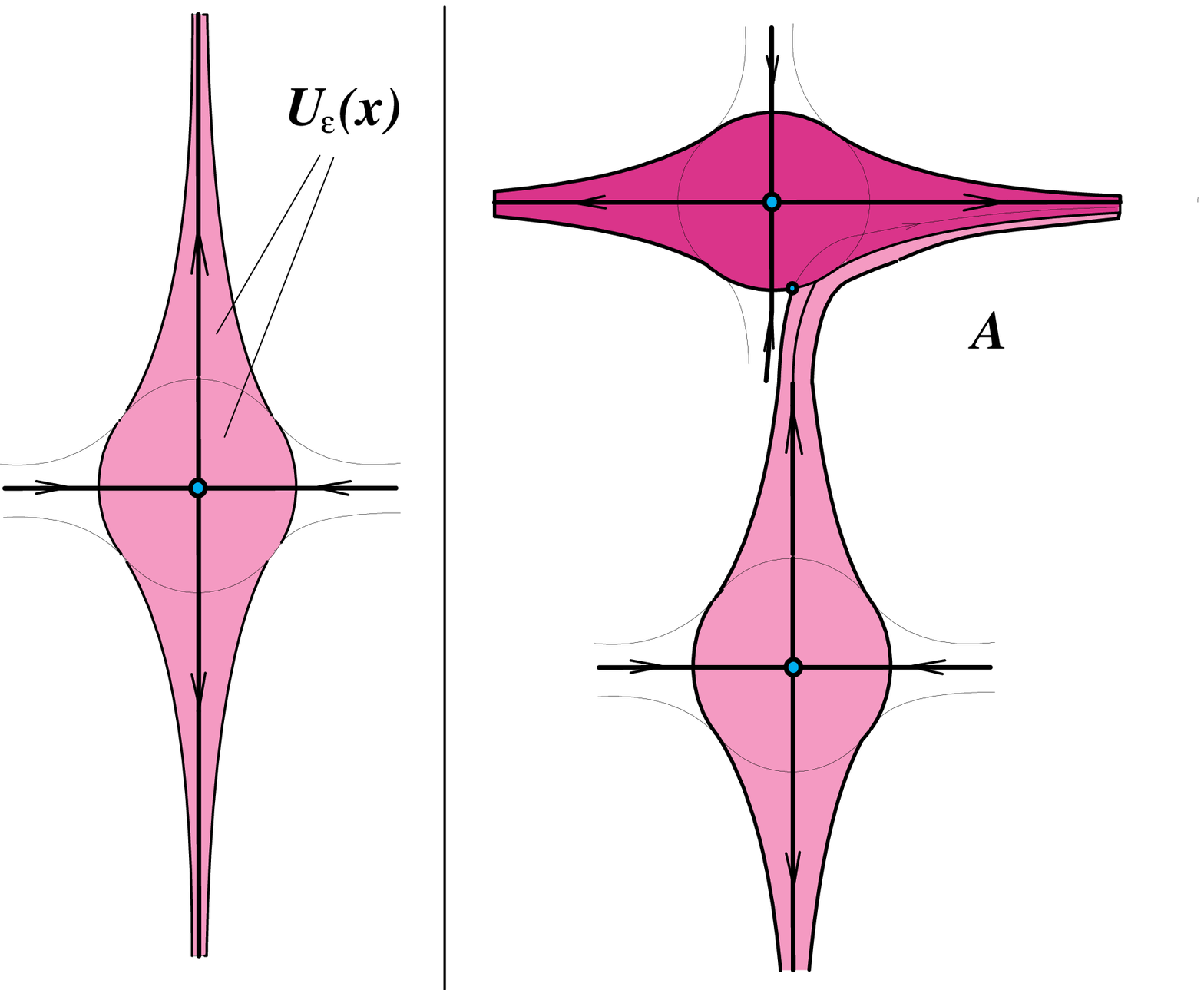}} 
\bigskip
\caption{}
\end{figure}


Since $A_\e$ consists of the downward trajectories of $v_1$,  if $x \in A_\e$, then any point $y \in \g_x$ which can be reached from $x$ following the field $-v_1$ (for short, ``is below $x$") belongs to $A_\e$ as well. Therefore the boundary $\d A_\e$ is assembled either from downward trajectories or from singletons; the singletons are contributed by some portions of $\coprod_{x \in \Sigma_1^+}\d D^n_\e(x)$ where $v_1$ points outside of the relevant disk $D^n_\e(x)$. Thus either $v_1$  is tangent to $\d A_\e$, or it points outside $A_\e$. 

Away from $\Sigma_1^+ \cup \Sigma_1^-$, $v_1 \neq 0$ is of the $f$-gradient type. Thus, in each tangent space $T_x(\d_1X)$, where $x \in \d_1X \setminus \Sigma_1$, there is an open cone $\mathcal C_x(f)$ comprised of $f$-gradient type vectors, and $v_1(x) \in \mathcal C_x(f)$. Therefore, in the vicinity of $\d A_\e$, we can perturb $v_1$  to a new field $\tilde v_1$ of the $f$-gradient type, so that  $\tilde v_1$ points strictly outside $A_\e$ and still $\tilde v_1(x) \in \mathcal C_x(f)$ for all $x \in \d A_\e$. It is possible to smoothen the boundary $\d A_\e$ so that,  with respect to a new smooth boundary $\d \tilde A_\e$,  the field $\tilde v_1$ still points outside $\tilde A_\e \supset \Sigma_1^+$, the new domain bounded by $\d \tilde A_\e$, and $\tilde v_1(x) \in \mathcal C_x(f)$ for all $x \in \d \tilde A_\e$. 

Note that if $f(\Sigma_1^+) < c < f(\Sigma_1^-)$, then $N := f^{-1}(c)$ can serve as a separator.

Let $\tilde A := \tilde A_\e$ and $\tilde B := \overline{\d_1X \setminus \tilde A}$. With the separator $N = \d \tilde A$ in place, consider a smooth function $\tilde h: \d_1X \rightarrow \R$ with the properties: 
\begin{enumerate} 
\item zero is a regular value of $\tilde h$, and  $\tilde h^{-1}(0) = N$,
\item $\tilde h^{-1}((-\infty, 0]) = \tilde A$, $\tilde h^{-1}([0, +\infty) = \tilde B$, 
\item $\tilde h = h$ in a neighborhood of $\Sigma_1^+ \cup \Sigma_1^-$,
\item  $\tilde v := \tilde v_1 + \tilde h\cdot n \in \mathcal C(f)$, where $n$ is the inward normal to $\d_1X$ in $X$.
\end{enumerate}  
Note that the field $\tilde v$ points inside of $X$ along $\tilde A$ and outside of $X$ along $\tilde B$. It also points outside of $\tilde A$ along $N = \tilde A \cap \tilde B$. As a result, we conclude that $\d_2^-X(\tilde v) = N$ and  $\d_2^+X(\tilde v) = \emptyset$; in other words, $\tilde v$ is boundary convex. Note that $\tilde v_1$ can be chosen arbitrary close to $v_1$. Ineeded, employing Theorem \ref{th3.2}, we can perturb $\tilde v_1$ to insure its genericity with respect to the pair $(\d_1^+X(\tilde v), \d_2X(\tilde v))$, and thus the boundary genericity of $\tilde v$ itself. 
\smallskip 

The argument in the concave case, which deals with the relation between the property $\d_2^- X(v) = \emptyset$ and the absence of an ascending $v_1$-trajectory $\g: \R \to \d _1X$,  connecting  $\Sigma_1^+ $ to $\Sigma_1^-$, is analogous. We just need to switch the polarities of the relevant sets. 
\end{proof}

Now we need to introduce a number basic notions to which we will return on many occasions in the future. 

\begin{defn}\label{def4.2} Let $\omega$ be a differential $1$-form  on a manifold $Y$.  

We say that a path $\g: [0, 1] \to Y$ is $\omega$-\emph{positive} ($\omega$-\emph{negative}), if , $\omega(\dot \g(t)) > 0 \; (< 0)$ for all  values of the parameter $t \in (0, 1)$.  \hfill \qed
\end{defn}


\begin{defn}\label{def4.3}  Let $\omega$ be a closed differential $1$-form  on a manifold $Y$, equipped with a Riemannian metric $g$. We say that a vector field $v$ on $Y$ is the \emph{gradient} of $\omega$ (and denote it ``$\nabla_g\omega$"), if $\omega(w) = \langle v, w \rangle_g$ for any vector field $w$ on $Y$. \hfill\qed
\end{defn}

\begin{defn}\label{def4.4}  Let $\omega$ be a differential $1$-form  on a manifold $Y$ and let $\Sigma_\omega$ be the set of points $y \in Y$, where $\omega: T_yY \to \R$ is the zero map.  Assume that $\omega = df$ for some smooth function $f$ in the vicinity of $\Sigma_\omega$. 

We say that a vector field $v$ is of $\omega$-\emph{gradient type} if $\omega(v) > 0$ on $Y \setminus \Sigma_\omega$ and $v = \nabla_gf$ in the vicinity of $\Sigma_\omega$. Here $g$ is \emph{some} Riemannian metric in the vicinity of $\Sigma_\omega$ (cf. Definition \ref{def2.2}). \hfill \qed
\end{defn}

We are in position to formulate a generalization of Theorem \ref{th4.1} for closed differential $1$-forms---another instance of somewhat weaker ``holographic phenomenon", this time for  fields which may not be gradient-like globally.

\begin{thm}\label{th4.2} Let $\omega$ be a closed $1$-form on a compact manifold $X$, equipped with a Riemannian metric $g$.  Assume that $\omega$ and $\omega|_{\d_1X}$ have only  Morse-type singularities. Let  the gradient $v: = \nabla_g \omega$ be a boundary generic field, and let $v_1:= \nabla_{g|_{\d_1X}}(\omega|_{\d_1X})$.  

If $\d_2^\pm X(v) = \emptyset$, then 
there is no $\omega$-ascending $v_1$-trajectory $\g \subset \d _1X$, such that 
$$\lim_{t \rightarrow -\infty} \g(t) \in \Sigma_1^\mp \;\; \text{and} \;\;  \lim_{t \rightarrow +\infty} \g(t) \in \Sigma_1^\pm.$$

Assume that there exists a codimension one submanifold $N \subset \d_1X$, which separates $\Sigma_1^+$ and $\Sigma_1^-$ and such that the field $v_1$ is  transversal to $N$ and points outwards/inwards of the domain in $\d_1X$ that is bounded by $N$ and contains $\Sigma_1^+$.  Then one can deform the $\omega$-gradient vector fields $(v, v_1)$ to a new boundary generic pair $(\tilde v, \tilde v_1)$ of the $\omega$-gradient type so that $\d_2^\pm X(\tilde v) = \emptyset$.   
\end{thm}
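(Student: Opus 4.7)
The strategy is to import the proof of Theorem \ref{th4.1} into the closed-form setting by exploiting the local exactness of $\omega$ via the Poincar\'e lemma. That earlier proof used a global primitive $f$ in only two essential places: the pointwise geometric analysis at a crossing $x \in \d_2X(v)$, which is local and only requires $\omega = df$ near $x$; and the construction of a separating submanifold from the descending basin of $\Sigma_1^+$, which in Theorem \ref{th4.2} is supplied by hypothesis. For the necessity direction, assume $\d_2^+X(v) = \emptyset$ (the concave case is symmetric). Decompose $v|_{\d_1X} = v_1 + h\cdot n$ with $h^{-1}(0) = \d_2X(v)$, and suppose for contradiction that an ascending $v_1$-trajectory $\g$ joins $\Sigma_1^-$ to $\Sigma_1^+$; note that $\g$ is automatically $\omega$-positive, since $\omega(v_1) = |v_1|_g^2 > 0$ off $\Sigma_1$. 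The trajectory must cross $\d_2X(v)$ at some point $x$, where $v = v_1$ equals the $\omega$-gradient vector. Choose a chart $U \ni x$ small enough that $\omega|_U = df$ for some local primitive $f$; then $v_1|_U = \nabla_g f$, and the local geometric argument of Theorem \ref{th4.1} applies verbatim to conclude $x \in \d_2^+X(v)$, a contradiction.

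For the sufficiency direction, let $A \supset \Sigma_1^+$ and $B \supset \Sigma_1^-$ denote the two domains cut out by the hypothesized separator $N$, with $v_1$ transversal to $N$ and pointing outward of $A$ (the convex case). Construct a smooth function $\tilde h: \d_1X \to \R$ with $\tilde h^{-1}(0) = N$, with $\tilde h > 0$ on $B$ and $\tilde h < 0$ on $A$, and with $\tilde h = h$ in small neighborhoods of $\Sigma_1^+ \cup \Sigma_1^-$. Pick a small boundary generic perturbation $\tilde v_1$ of $v_1$ (using Theorems \ref{th2.1} and \ref{th3.2}) that retains the transversality and polarity along $N$, and set $\tilde v := \tilde v_1 + \tilde h \cdot n$ along $\d_1X$. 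Extend $\tilde v$ smoothly into a collar of $\d_1X$ in $X$, then splice with $v$ on the complement of that collar via a partition of unity. By construction $\d_2X(\tilde v) = N$, and the outward polarity of $\tilde v_1$ along $N$ forces $\d_2^+X(\tilde v) = \emptyset$; the concave case follows by reversing signs.

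The principal obstacle is ensuring that $\tilde v$ remains of $\omega$-gradient type in the sense of Definition \ref{def4.4}, i.e.\ $\omega(\tilde v) > 0$ on $X \setminus \Sigma_\omega$ and $\tilde v = \nabla_g f$ near $\Sigma_\omega$. The second condition is automatic once we keep $\tilde v = v$ near $\Sigma_\omega$. Away from $\Sigma_\omega$ the set $\{u : \omega_x(u) > 0\} \subset T_xX$ is an open convex cone containing $v(x)$; since $\omega(\tilde v_1) > 0$ on $\d_1X \setminus \Sigma_1$, choosing the perturbations $|\tilde v_1 - v_1|$ and $|\tilde h - h|$ uniformly small, together with a slow collar extension and a convex partition-of-unity splice with $v$, keeps $\tilde v$ inside this cone throughout $X$.
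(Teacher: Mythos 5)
Your proposal is correct and follows essentially the same route as the paper: the paper's own proof of Theorem \ref{th4.2} simply observes that, once the separator $N$ is supplied by hypothesis, the arguments of Theorem \ref{th4.1} go through verbatim, the $\omega$-gradient fields being characterized by the open condition $\omega(v_1)>0$ off $\Sigma_1$. Your added observations --- that the crossing analysis at $\d_2X(v)$ needs only a local primitive of $\omega$, and that the cone condition $\omega(\tilde v)>0$ survives small perturbations and the partition-of-unity splice --- are exactly the points the paper leaves implicit (modulo a sign slip you share with the paper's proof of Theorem \ref{th4.1}: with $n$ the \emph{inward} normal one wants $\tilde h>0$ on the domain containing $\Sigma_1^+$).
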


\begin{proof} The $(\omega|_{\d_1X})$-gradient fields $v_1$ on $\d_1X$ are characterized by the property $\omega(v_1) > 0$, valid on the locus where $\omega|_{\d_1X} \neq 0$. Usually, in this setting, we do not have a natural choice for the wall $N \subset \d_1X$ which would separate the singularities of opposite polarities $\Sigma_1^+ = \Sigma_1^+(\omega)$ and $\Sigma_1^- = \Sigma_1^-(\omega)$ and such that the field $v_1$  would be  transversal to $N$. 
It seems unlikely that the absence of an ascending $v_1$-trajectory which links $\Sigma_1^-$ with $\Sigma_1^+$ is sufficient to guarantee the existence of a separator $N$. However, in the presence of such separator $N$, the arguments are identical with the ones employed in the proof of Theorem \ref{th4.1}. 
\end{proof}

{\bf Remark 4.1.} In Theorem \ref{th4.1}  and Theorem \ref{th4.2},  the partition $\Sigma_1^+ \coprod \Sigma_1^-$ of the singular set $\Sigma_1$ must satisfy some basic relations:
$$\sum_{x \in \Sigma_1^+} \mathsf{ind}_x(v_1) + \sum_{x \in \Sigma_1^-} \mathsf{ind}_x(v_1) = 0, \; \text{when}\; n+1 \equiv 0\; mod\; 2,$$
$$\sum_{x \in \Sigma_1^+} \mathsf{ind}_x(v_1) + \sum_{x \in \Sigma_1^-} \mathsf{ind}_x(v_1) = 2\cdot \chi(X), \; \text{when}\; n+1 \equiv 1\; mod\; 2.$$
These relations reflect the fact that $\chi(\d_1X) = 0$ when $n+1 \equiv 0\; mod\; 2$,  and $\chi(\d_1X) = 2\cdot \chi(X)$ when $n+1 \equiv 1\; mod\; 2$.
\hfill\qed
\smallskip

Given a metric $g$ on a Riemannian $(n+1)$-manifold $X$, let us recall a definition of the Hodge  Star Operator $\ast_g: T^\ast(X) \to \bigwedge^n T^\ast(X)$.  

Pick a local basis $\a := (\a_1, \dots , \a_{n+1})$ of $1$-forms in $T^\ast(X)$ and consider the associated basis $$\a^\vee := (\dots ,\;\; (-1)^{k +1}\a_1 \wedge \dots \, ^{\vee^{(k)}}  \dots \wedge \a_{n+1},\;\; \dots )$$ of $\bigwedge^n T^\ast(X)$, where $1 \leq k \leq n+1$ and the symbol ``$\vee^{(k)} $" stands for omitting the $k$-th form $\a_k$ from the product $\a_1 \wedge \dots  \wedge \a_{n+1}$. 

Assume that, in the dual to $\a$  basis $\a^\ast$ of $T(X)$, the metric $g$ is locally given by a matrix $\mathsf g  = (g_{ik})$. 
Then 
the matrix $\mathsf G$ of the $\ast_g$-operator in the bases $\a$, $\a^\vee$ is given by the formula
\begin{eqnarray}\label{eq4.1} 
\mathsf G = \sqrt{\det(\mathsf g)} \cdot \mathsf g^{-1}
\end{eqnarray}
, whence $\det(\mathsf G) = (\det(\mathsf g))^{\frac{n -1}{2}}$.
\smallskip

\begin{defn}\label{def4.5} A closed differential $1$-form $\omega$ on a compact manifold $Y$ is called \emph{intrinsically harmonic} if there exists a Riemannian metric  $g$ on $Y$ such that the form $\ast_g(\omega)$ is \emph{closed}. \hfill \qed
\end{defn}

\noindent {\bf Example 4.3.}
Let $Y$ be a closed 
smooth manifold and $H: Y \to S^1$ a smooth map with isolated Morse-type singularities. Consider the closed $1$-form $\omega := H^\ast(d\theta)$,  the pull-back of the canonic $1$-form $d\theta$ on the circle $S^1$. Assume that one of the $H$-fibers, $F_0 := H^{-1}(\ast)$,  is connected. Then $\omega$ is intrinsically harmonic \cite{FKL}. 
\hfill\qed

Let $\Sigma_\omega$ denote the singularity set of a closed $1$-form $\omega$ on a compact manifold $Y$. We assume that  $\Sigma_\omega \subset \textup{int}(Y)$. 

By  Calabi's Proposition 1 \cite{Ca}, $\omega$ is intrinsically harmonic if and only if  through every point $y \in Y \setminus  \Sigma_\omega$ there is a $\omega$-\emph{positive} path $\g$ which either is a loop, or which starts and terminates at the boundary $\d Y$. 

\begin{thm}\label{th4.3} Let  $\omega$ be a closed $1$-form on a Riemannian manifold $X$, such that $\Sigma_\omega \subset \textup{int}(X)$.  Assume that $\omega|_{\d_1X}$, the restriction of $\omega$ to $T(\d_1X)$, is a harmonic form\footnote{This assumption implies that $H^1(\d_1X; \R) \neq 0$, provided $\omega_{|_{\d_1X}} \neq 0$.}. 

Then the gradient field $v := \nabla\omega$ is not boundary convex or  boundary concave (that is, $\d^+_2X(v) \neq \emptyset$ \emph{and} $\d^-_2X(v) \neq \emptyset$). Thus, if $\d_2X(v)$ is connected, then $\d_3X(v) \neq \emptyset$.
\end{thm}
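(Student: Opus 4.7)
The plan is to derive the result by a single divergence-theorem computation on $\d_1X$, without invoking Calabi's criterion or Theorem \ref{th4.2}. The key observation is that harmonicity of $\omega|_{\d_1X}$ is precisely the statement that the gradient $v_1=\nabla_{g|_{\d_1X}}(\omega|_{\d_1X})$ is divergence-free on $\d_1X$: by (\ref{eq4.1}) (or direct computation in an orthonormal frame), $\ast_{g|_{\d_1X}}(\omega|_{\d_1X})=\iota_{v_1}\,d\mu_{g|_{\d_1X}}$, so closedness of $\ast(\omega|_{\d_1X})$ translates to $\mathrm{div}(v_1)=0$ on $\d_1X$.

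Next I would assume, for contradiction, that $v$ is boundary convex, that is, $\d_2^+X(v)=\emptyset$, so $\d_2X(v)=\d_2^-X(v)$. Applying the divergence theorem to $v_1$ on the compact codimension-zero submanifold-with-boundary $\d_1^+X(v)\subset\d_1X$, whose boundary is $\d_2X(v)$, gives
\[
0\;=\;\int_{\d_1^+X(v)}\mathrm{div}(v_1)\,d\mu_{g|_{\d_1X}}\;=\;\int_{\d_2X(v)}\langle v_1,\nu\rangle\,d\sigma,
\]
where $\nu$ is the outward unit normal to $\d_2X(v)$ in $\d_1^+X(v)$, pointing from $\d_1^+X$ towards $\d_1^-X$. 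The assumption $\d_2^+X(v)=\emptyset$ says that at every $p\in\d_2X(v)$ the tangent vector $v(p)=v_1(p)$ points into $\d_1^-X(v)$, so $\langle v_1,\nu\rangle\geq 0$ on $\d_2X(v)$, with equality exactly on the locus $\d_3X(v)\subset\d_2X(v)$ where $v_1$ is tangent to $\d_2X(v)$. Since $v|_{\d_1X}\neq 0$ forces $v_1\neq 0$ throughout $\d_2X$, and boundary genericity makes $\d_3X(v)$ a smooth submanifold of positive codimension in $\d_2X(v)$, the integrand is continuous, nonnegative, and strictly positive on an open dense subset of $\d_2X(v)$. Hence the flux is strictly positive, contradicting the Stokes vanishing, so $\d_2^+X(v)\neq\emptyset$.

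The concave case $\d_2^-X(v)\neq\emptyset$ is handled by the symmetric computation on $\d_1^-X(v)$ in place of $\d_1^+X(v)$: the outward normal to $\d_2X(v)$ now points toward $\d_1^+X$, and under the hypothesis $\d_2^-X(v)=\emptyset$ the vector $v_1(p)$ at $p\in\d_2X$ points into $\d_1^+X$, so again the flux is strictly positive. For the final implication, once both $\d_2^\pm X(v)$ have been shown nonempty, their union is $\d_2X(v)$ and they share a common boundary $\d_3X(v)$ in $\d_2X(v)$; if $\d_2X(v)$ is connected, two nonempty compact codimension-zero submanifolds cannot be disjoint within it, so $\d_3X(v)\neq\emptyset$.

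The main technical point to verify is that the degeneracy locus $\d_3X(v)\subset\d_2X(v)$, on which $\langle v_1,\nu\rangle$ vanishes, has measure zero in $\d_2X(v)$; this is automatic from boundary genericity, which presents $\d_3X$ as a smooth submanifold of positive codimension in $\d_2X$. I would also flag the trivial edge case $\d_2X(v)=\emptyset$, in which the dichotomy statement is vacuous; the argument above of course presumes $\d_2X(v)\neq\emptyset$, which is the situation of interest under the harmonicity hypothesis.
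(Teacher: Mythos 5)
Your argument is correct and is essentially the paper's own proof in vector-field notation: the identity $\ast_{g|_{\d_1X}}(\omega|_{\d_1X})=\iota_{v_1}\,d\mu$ turns your divergence-theorem computation on $\d_1^+X(v)$ into exactly the paper's application of Stokes' theorem to the closed form $\ast_{\d}(\omega|_{\d_1X})$, followed by the same sign analysis of the flux through $\d_2X(v)$ and the same connectedness argument for the last claim. (Your care about the degeneracy locus is a nice touch, though under the contradiction hypothesis $\d_2^+X(v)=\emptyset$ one in fact has $\d_3X(v)=\d(\d_2^+X(v))=\emptyset$, so the integrand is strictly one-signed everywhere.)
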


\begin{proof} We abbreviate $\d_j^\pm X(v)$ to $\d_j^\pm X$ and $\ast_{g|_{\d_1X}}$ to $\ast_\d$. Here $\ast_{\d}$ is the $\ast$-operator on the boundary of $X$ with respect to the given Riemannian metric $g$ on $X$. 

If $\ast_{\d}(\omega|_{\d_1X})$ is a closed $(n -1)$-form on $\d_1X$, then by the Stokes Theorem, $$\int_{\d_2X} \ast_{\d}(\omega|_{\d_1X}) =  \int_{\d_1X^+} d(\ast_{\d}(\omega|_{\d_1X})) = 0.$$ However, for a concave/convex gradient field $v =\nabla\omega$, the $(n-1)$-form $\ast_{\d}(\omega|_{\d_1X})$, being restricted to $\d_2X$, is proportional  to the volume form of $\d_2X$ with negative/positive functional coefficient.  Indeed, at the points of $\d_2^+X$, the angle between $v$ and the normal $n$ to $\d_2X$ in $\d_1^+X$ is acute, while it is obtuse at the points of $\d_2^-X$. Therefore, $\int_{\d_2X} \ast_{\d}(\omega|_{\d_1X}) \neq 0$ when either $\d_2^+X = \emptyset$ or $\d_2^-X = \emptyset$. The resulting contradiction proves that  $\d_2^+X \neq \emptyset$ and $\d_2^-X \neq \emptyset$. 

Therefore, when $\d_2X$ is connected, then $\d_2^+X$ and $\d_2^-X$ must share the common nonempty boundary $\d_3X$---the gradient field $v$ must have cuspidal points.
\end{proof}
\smallskip

\noindent {\bf Example 4.4.} Let $X$ be a compact  smooth manifold and $H: X \to S^1$ a smooth map with isolated Morse-type singularities. Consider the closed $1$-form $\omega := H^\ast(d\theta)$,  the pull-back of the canonic $1$-form $d\theta$ on the circle $S^1$. Assume that one of the fibers of the map $H: \d_1X \to S^1$ is connected. Then there exists a metric $g$ on $X$ such that the form $\omega^\d := H^\ast(d\theta)|_{\d_1X}$ is harmonic  (\cite{Ca}, \cite{FKL}). Consider the gradient field $v := \nabla_g(\omega)$. Then by Theorem \ref{th4.3}, $\d_2^+X(v) \neq \emptyset$ and $\d_2^-X(v) \neq \emptyset$ for any metric $g$ that ``harmonizes" $\omega^\d$. \hfill\qed

\begin{defn}\label{def4.6} A non-vanishing vector field $v$ on a compact manifold $X$ is called \emph{traversing} if each $v$-trajectory is either a closed segment or a singleton which belongs to $\d X$. \hfill\qed
\end{defn}

\noindent {\bf Remark 4.2.} The definition excludes fields with zeros in $X$ (they will generate trajectories that are homeomorphic to open or semi-open intervals) and fields with closed trajectories. Note that all gradient-like fields of nonsingular functions are traversing, but the gradient-like fields of nonsingular closed $1$-forms may not be traversing! \hfill\qed

\begin{lem}\label{lem4.1} Any traversing vector field is of the gradient type.
\end{lem}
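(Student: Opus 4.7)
Because a traversing field is by definition nowhere-vanishing, the locus $Z$ in Definition~\ref{def2.2} is empty, the third bullet of that definition becomes vacuous, and the first bullet reduces to $df\neq 0$ everywhere (which will follow automatically from the second). Thus the lemma amounts to producing a single smooth function $f:X\to\R$ with $df(v)>0$ at every point of $X$.

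My plan is to realize $f$ as a flow time. Starting from the extension germ $(\hat X,\hat v)$ introduced at the beginning of Section~2, I would first shrink $\hat X$ to a collar-thin tubular neighborhood of $X$, arranging that every $\hat v$-trajectory meeting $X$ is a compact arc in $\hat X$ with both endpoints on $\d\hat X$. This is possible because $v$ being traversing prohibits closed or recurrent $v$-orbits inside $X$, and a sufficiently thin extension cannot create new ones. I would then construct a smooth hypersurface $\Sigma \subset \hat X \setminus X$ transversal to $\hat v$ and meeting every such trajectory exactly once. Granted $\Sigma$, the flow-box theorem makes the signed-time function
\[
f(x)\;:=\;\tau,\qquad\text{where }\phi_{-\tau}(x)\in\Sigma\text{ for }\phi_s\text{ the }\hat v\text{-flow},
\]
smooth on a neighborhood of $X$ in $\hat X$ and satisfying $df(\hat v)\equiv 1$; restricting $f$ to $X$ then gives $df(v)=1>0$, as required.

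The crux is the construction of $\Sigma$. The naive candidate $\Sigma=\d_1^+X$ fails to be transversal to $\hat v$ along $\d_2X$, and pushing $\d_1^+X$ forward by the flow does not repair this, since transversality is flow-invariant. My remedy is a local-to-global patching: the family of trajectories through the compact set $X$ can be covered by finitely many flow-boxes $U_1,\dots,U_N\subset\hat X$, each carrying a local transversal slice $\Sigma_i$; the $\Sigma_i$'s are then glued along the $\hat v$-flow using a partition of unity on the trajectory space $\hat X/\hat v$. The absence of recurrence, inherited from $v$ being traversing on $X$, is precisely what ensures that the glued object is an embedded smooth hypersurface meeting every trajectory exactly once rather than multiply. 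This patching step is where I expect the main technical work to lie; once it is in place, the remaining steps are routine consequences of the flow-box theorem.
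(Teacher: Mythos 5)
Your reduction of the lemma to producing a single smooth $f$ with $df(v)>0$ is correct, and your opening moves (pass to the germ $(\hat X,\hat v)$, cover the trajectories through the compact $X$ by finitely many flow boxes, use the local time functions supplied by the flow-box theorem) are exactly the paper's. The gap is the step you yourself identify as the crux: the existence of a \emph{global} cross-section $\Sigma$ meeting every $\hat v$-trajectory through $X$ exactly once. Absence of recurrence is \emph{not} sufficient for this. A hypersurface meeting each trajectory once and transversally would identify the trajectory space of the saturation of $X$ with the manifold $\Sigma$; but for traversing fields this trajectory space is typically a \emph{branched} (non-Hausdorff) object --- the branching is precisely what the loci $\d_2^+X(v)$ record, since a trajectory tangent to $\d_1X$ from inside is a Hausdorff limit of pairs of distinct nearby trajectories. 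Near such a tangency, any closed set meeting each trajectory exactly once must jump discontinuously, so no embedded $\Sigma$ exists, and no partition-of-unity gluing of the local slices $\Sigma_i$ ``along the flow'' can produce one (the averaging of hitting times that such a gluing requires already presupposes the global time function you are trying to build). Placing $\Sigma$ in $\hat X\setminus X$ does not repair this in general, because the branching of the $\hat v$-orbit space over the saturation of $X$ persists for an arbitrary germ-extension $(\hat X,\hat v)$; you give no argument that an extension with Hausdorff orbit space can always be chosen, and that is a genuinely stronger statement than the lemma.

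The fix --- and the paper's actual argument --- is to glue the local \emph{time functions} rather than the local \emph{sections}. For each flow box $\hat U_{\g^\star}\approx S_{\g^\star}\times[-a,b]$ one has the time coordinate $\phi_{\g^\star}(x)=\int_{S_{\g^\star}}^{x}\hat v$ with $\mathcal L_v\phi_{\g^\star}=1$, and a bump function $\chi_{\g^\star}\ge 0$ on the slice $S_{\g^\star}$, vanishing on $\d S_{\g^\star}$, pulled back to the box by the projection along trajectories so that it is \emph{constant along each trajectory}. Setting $f=\sum_{\g^\star}\hat\chi_{\g^\star}\cdot\phi_{\g^\star}$ gives
$\mathcal L_v f=\sum_{\g^\star}\hat\chi_{\g^\star}\cdot\mathcal L_v\phi_{\g^\star}>0$,
since the Lie derivative kills the trajectory-constant weights and every point lies in the interior of some box. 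No global section, and no Hausdorff trajectory space, is needed. Your proposal has the right local ingredients but assembles them into an object that need not exist.
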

\begin{proof} Let $v$ be a traversing field on $X$. We extend the pair $(X, v)$ to a pair $(\hat X, \hat v)$ so that $X$ is properly contained in $\hat X$ and $\hat v \neq 0$. 

Then every $v$-trajectory $\g^\star \subset X$ has a local transversal compact section $S_{\g^\star} \subset \textup{int}(\hat X)$ of the $\hat v$-flow. We can choose $S_{\g^\star}$ to be diffeomorphic to a $n$-dimensional ball with its center at the singleton $\g^\star \cap S_{\g^\star}$. We denote by $\tilde U_{\g^\star}$ the union of $\hat v$-trajectories through $S_{\g^\star}$. 

For each $v$-trajectory $\g^\star$, there exists a section $S_{\g^\star}$ so that the set $\tilde U_{\g^\star}$ contains a compact cylinder $\hat U_{\g^\star} \approx S_{\g^\star} \times [-a_{\g^\star}, b_{\g^\star}]$, where $a_{\g^\star}, b_{\g^\star}$ are positive constants (which depend on $\g^\star$), with the properties: 
\begin{enumerate}
\item $\hat U_{\g^\star} \supset \tilde U_{\g^\star} \cap X$,
\item for any $\hat v$-trajectory $\tilde\g$ through $S_{\g^\star}$, the intersection $\hat\g := \tilde\g \cap \hat U_\g$ is a segment, 
\item the point $\tilde\g \cap S_{\g^\star}$ belongs to the interior of segment $\hat\g$.
\end{enumerate}

Then the collection $\mathcal U := \{\hat U_{\g^\star} \cap X\}_{\g^\star}$ forms a cover of $X$. Since $X \subset \mathsf{int}(\hat X)$ is compact,  we can choose a finite subcover $\mathcal U' \subset \mathcal U$ of $X$. 

For each $\hat U_{\g^\star} \cap X \in \mathcal U'$ and the corresponding section $S_{\g^\star}$, we produce a smooth function $\phi_{\g^\star}: \hat U_{\g^\star} \to \R$  by integrating the vector field $\hat v$ and using $S_{\g^\star}$ as the initial location for the integration. More accurately, let $$\psi_{\hat\g}: [-a_{\g^\star}, b_{\g^\star}]  \to \hat U_{\g^\star}$$ be the parametrization of a typical trajectory $\hat\g \subset \hat U_{\g^\star}$, such that  $$\frac{d}{dt}\psi_{\hat\g}(\tau) = \hat v(\psi_{\hat\g}(\tau))$$ for all $\tau \in [-a_{\g^\star}, b_{\g^\star}]$ and $\psi_{\hat\g}(0) = \hat\g \cap S_{\g^\star}$. This bijective parametrization introduces a smooth product structure $$\Phi: \hat U_{\g^\star} \approx S_{\g^\star} \times [-a_{\g^\star}, b_{\g^\star}]$$ by the formula $\Phi(x) := \big(\hat\g_x \cap S_{\g^\star}, \, (\psi_{\hat\g_x}^{-1}(x)\big)$.

We define a smooth function $\hat\phi_{\g^\star}: \hat U_{\g^\star} \to \R$ by the formula $x \to \psi_{\hat\g_x}^{-1}(x)$ and denote it (quite appropriately) by the symbol $\int_{S_{\g^\star}}^x \hat v$.  

Let $\chi_{\g^\star}: S_{\g^\star} \to \R_+$ be a smooth non-negative function that vanishes only on the boundary $\d S_{\g^\star}$. Let $\tilde\chi_{\g^\star}: \hat U_{\g^\star} \to \R_+$ denote the composition of the $\hat v$-directed projection $\pi_{\g^\star}: \hat U_{\g^\star} \to S_{\g^\star}$ with the function $\chi_{\g^\star}$. Since $\tilde\chi_{\g^\star}$ vanishes on $\d \hat U_{\g^\star} \cap X$, the function extends smoothly on $X$ to produce a smooth function $\hat\chi_{\g^\star}: X \to \R_+$ with the support in $ \hat U_{\g^\star} \cap X$.

Now consider the smooth function
\begin{eqnarray}\label{eq4.2}  
f(x) := \sum_{\hat U_{\g^\star} \in \, \mathcal U'} \hat\chi_{\g^\star}(x) \Big(\int_{S_{\g^\star}}^x \hat v\Big)
\end{eqnarray} 
 
It is  well-defined on $X$. Let us compute its $v$-directional derivative:
\begin{eqnarray}\label{eq4.3}  
\mathcal L_v(f(x)) = \sum_{\hat U_{\g^\star} \in\, \mathcal U'} \mathcal L_v\Big[\hat\chi_{\g^\star}(x)\cdot \Big(\int_{S_{\g^\star}}^x \hat v\Big)\Big] \nonumber \\
=  \sum_{\hat U_{\g^\star} \in \, \mathcal U'} \hat\chi_{\g^\star}(x)\, \mathcal L_v\Big(\int_{S_{\g^\star}}^x \hat v\Big) > 0 
\end{eqnarray}

Let us explain formula \ref{eq4.3}. By the very definition of $\hat\chi_{\g^\star}$, it is constant on each $\hat v$-trajectory, so that $\mathcal L_v(\hat\chi_{\g^\star}) = 0$. Also, $\hat\chi_{\g^\star} > 0$ in $\textup{int}(\hat U_{\g^\star})$. At the same time, $\mathcal L_v\big(\int_{S_{\g^\star}}^x \hat v\big) > 0$, since $\frac{d}{dt}\psi_{\hat\g} = \hat v(\psi_{\hat\g}) \neq 0$ and $\psi_{\hat\g}$ increases in the direction of $v$. 
Finally, each $x \in X$ belongs to the interior of some set $\hat U_{\g^\star}$.

Therefore, $df(v) = \mathcal L_v(f) > 0$, so that $v$ is a gradient-like field for $v$.
\end{proof}

\begin{cor}\label{cor4.1} Let $X$ be a smooth compact manifold with boundary. Then $\mathcal V_{\mathsf{trav}}(X)$---the space of traversing vector fields on $X$---is nonempty and  coincides with  the intersection $\mathcal V_{\mathsf{grad}}(X) \cap \mathcal V_{\neq 0}(X)$, where $\mathcal V_{\mathsf{grad}}(X)$ denotes the space of gradient-like fields, and $\mathcal V_{\neq 0}(X)$ the space of all non-vanishing fields on $X$.
\end{cor}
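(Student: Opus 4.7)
The claim splits into three parts: (i) every traversing field is non-vanishing and of gradient type, (ii) every non-vanishing gradient-like field is traversing, and (iii) $\mathcal V_{\mathsf{trav}}(X)$ is nonempty. The plan is to dispatch (i) directly from Definition \ref{def4.6} and Lemma \ref{lem4.1}, to prove (ii) via a uniform positivity estimate together with compactness of $X$, and to deduce (iii) by combining (ii) with Lemma \ref{lem3.2}.

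For (i), a traversing field $v$ is non-vanishing by Definition \ref{def4.6}: any interior zero $x^\ast$ of $v$ would produce a singleton trajectory $\{x^\ast\}$ that does not lie in $\d X$, contradicting the definition. That $v$ is of gradient type is exactly the content of Lemma \ref{lem4.1}.

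For (ii), let $v \in \mathcal V_{\mathsf{grad}}(X) \cap \mathcal V_{\neq 0}(X)$, and let $f : X \to \R$ be a function with $df(v) > 0$ on $X \setminus Z$, where $Z = \{v = 0\}$. Since $v \neq 0$ everywhere, $Z = \emptyset$, so $df(v)$ is a continuous strictly positive function on the compact manifold $X$. Hence there is a constant $c > 0$ with $df(v) \geq c$ on all of $X$. Extend $(X, v)$ to $(\hat X, \hat v)$ as in Section 2, and for each $x \in X$ let $\gamma_x$ be the connected component of $\hat\gamma_x \cap X$ containing $x$, where $\hat\gamma_x$ is the $\hat v$-trajectory through $x$; this is the $v$-trajectory through $x$. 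Along $\gamma_x$ the function $f$ strictly increases at rate at least $c$, while $f$ is bounded on $X$ by compactness, so the time-length of $\gamma_x$ is at most $(\sup_X f - \inf_X f)/c < \infty$. Thus $\gamma_x$ is a compact interval (or a singleton), and its endpoints cannot lie in the interior of $X$, since $v \neq 0$ at an interior endpoint would allow the trajectory to be extended within $X$; hence both endpoints lie in $\d X$, and $v$ is traversing.

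For (iii), Lemma \ref{lem3.2} produces a Morse function $f : X \to \R$ with no critical points in the interior of $X$, together with a non-vanishing gradient-like field $v$; by part (ii) this $v$ is traversing, so $\mathcal V_{\mathsf{trav}}(X) \neq \emptyset$. The only nontrivial step is (ii), and its crux is the conversion of the pointwise infinitesimal inequality $df(v) > 0$ into the uniform lower bound $df(v) \geq c$. This is the mechanism by which compactness of $X$ converts the local ``gradient-like" condition into the global dynamical ``traversing" condition, and it is precisely why the statement genuinely requires compactness of the bulk $X$ rather than merely of $\d X$.
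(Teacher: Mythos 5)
Your proposal is correct and follows essentially the same route as the paper: traversing $\subset$ gradient-like and non-vanishing via Definition \ref{def4.6} and Lemma \ref{lem4.1}, the reverse inclusion via the Lyapunov function $f$ forcing every trajectory to hit $\d X$ in finite time, and non-emptiness from Lemma \ref{lem3.2}. Your part (ii) merely spells out in detail the uniform bound $df(v)\geq c$ that the paper compresses into the phrase ``controlled by some Lyapunov function $f$.''
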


\begin{proof} By definition, any traversing field $v$ on $X$ does not vanish. By Lemma \ref{lem4.1}, $v$ must be of the gradient type. Thus $$\mathcal V_{\mathsf{trav}}(X) \subset \mathcal V_{\mathsf{grad}}(X) \cap \mathcal V_{\neq 0}(X).$$

On the other hand, for a compact $X$ with a gradient-like $v \neq 0$, each $v$-trajectory $\g_x$ through $x \in \textup{int}(X)$ must reach the boundary in both finite positive and negative times (since it is controlled by some Lyapunov function $f$). 

As a result, $$\mathcal V_{\mathsf{trav}}(X) = \mathcal V_{\mathsf{grad}}(X) \cap \mathcal V_{\neq 0}(X).$$ 
\smallskip

It remains to show that $\mathcal V_{\mathsf{trav}}(X) \neq \emptyset$. By Lemma \ref{lem3.2}, $\mathcal V_{\mathsf{grad}}(X) \cap \mathcal V_{\neq 0}(X) \neq \emptyset$, which implies that $\mathcal V_{\mathsf{trav}}(X) \neq \emptyset$.
\end{proof}






\begin{figure}[ht]
\centerline{\includegraphics[height=1.7in,width=3in]{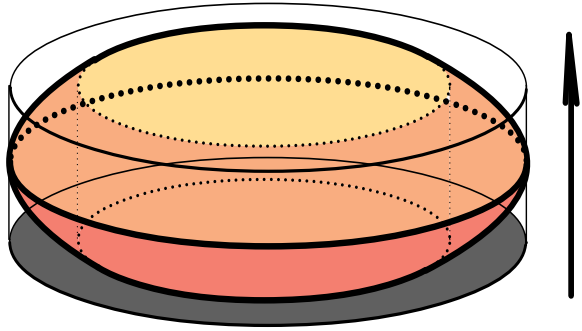}} 
\bigskip
\caption{The existence of a traversing boundary convex field $v$ (the constant vertical field) on a $(n+1)$-manifold $X$ (the ellipsoid-bounded solid) implies that topologically it is a product of a compact $n$-manifold $Y$ (the elliptical shadow) with an interval.}
\end{figure}

There are simple topological obstructions to boundary convexity of \emph{any} gradient-like \emph{nonvanishing} field on a given manifold $X$. The next lemma testifies that the existence of boundary convex traversing  fields $v$ imposes severe restrictions on the topology of the manifold $X$.  

\begin{lem}\label{lem4.2} A connected $(n +1)$-manifold $X$ admits a \emph{boundary convex}  traversing\footnote{equivalently, a non-vanishing gradient-like field} field $v$, if and only if, $X$ is diffeomorphic to a product of a connected compact $n$-manifold and a segment, the corners of the product being smoothly rounded. 
\end{lem}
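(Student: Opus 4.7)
For the forward implication, realize $X$ as a product $Y\times[0,1]$ with its corners smoothly rounded and transfer the vertical field $v=\partial/\partial t$ along the diffeomorphism. This $v$ is nonvanishing and is gradient-like for the nonsingular function $f(y,t)=t$, hence traversing by Corollary \ref{cor4.1}. Writing $\partial X$ locally as $\{h=0\}$ with $X=\{h\ge 0\}$, the corner rounding forces the boundary to bulge outward in the flow direction, so that $\langle\mathrm{Hess}(h)v,v\rangle<0$ at every point where $v$ is tangent to $\partial X$. By the calculation of Example 4.1 this gives $\partial_2^+X(v)=\emptyset$, so $v$ is boundary convex.

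For the converse, assume $(X,v)$ is traversing and boundary convex, and set $Y:=\partial_1^+X$; this is a compact $n$-manifold with $\partial Y=\partial_2 X$. The key observation is that boundary convexity together with the traversing hypothesis pins down the orbits: each trajectory through $y\in\partial_2 X$ is the singleton $\{y\}$ in $X$ (the $\hat v$-trajectory in $\hat X$ leaves $X$ on both sides of $y$ by the convex second-order contact), while every other trajectory is a closed segment from $\mathrm{int}(Y)$ to $\mathrm{int}(\partial_1^-X)$ meeting $\partial X$ only at its endpoints, since an interior tangency would lie in $\partial_2^+X=\emptyset$. The ``other endpoint'' assignment therefore defines a Poincar\'e map $\phi:Y\to\partial_1^-X$ with $\phi|_{\partial Y}=\mathrm{id}$. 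Smoothness on $Y\setminus\partial Y$ is the usual smooth dependence of the flow on initial data. At a tangent point $x_0\in\partial Y$, choose a flow-box chart $(u_1,\dots,u_n,t)$ with $v=\partial/\partial t$ and write $\partial X$ as the graph $\{u_1=g(u_2,\dots,u_n,t)\}$. Boundary convexity gives $g_{tt}(x_0)>0$, and writing $g(u,t')-g(u,t)=(t'-t)\,H(u,t,t')$ one checks that $H(x_0,t_0,t_0)=0$ while $\partial_{t'}H(x_0,t_0,t_0)=\tfrac12 g_{tt}(x_0)\ne 0$; the implicit function theorem then solves $H=0$ for a smooth $t'=t'(u,t)$. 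This makes $\phi$ smooth across $\partial Y$, and the symmetric construction for $-v$ furnishes a smooth inverse, so $\phi$ is a diffeomorphism fixing $\partial Y$ pointwise.

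The remaining step is to upgrade $\phi$ to a global diffeomorphism $X\cong Y\times[0,1]$ with corners rounded. My plan is to compare $X$ to the rounded model $M:=(Y\times[0,1])_{\mathrm{rnd}}$ carrying the vertical flow, whose trajectories have exactly the same combinatorial shape (segments in the interior, singletons on the corner-equator). Construct a map $\Psi:M\to X$ orbit by orbit: send $y\in\partial_1^+M$ to the corresponding point of $Y=\partial_1^+X$, and parametrize each trajectory of $M$ linearly in the normalized $f$-value so that $s=0$ hits $y$ and $s=1$ hits $\phi(y)$. Away from the tangent locus $\Psi$ is a diffeomorphism by smooth flow dependence. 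The only question is smoothness across $\partial_2 X$, where both exit times vanish with the same square-root profile $T(y)\sim 2\sqrt{\mathrm{dist}(y,\partial Y)/g_{tt}}$ coming from the common normal form $g_{tt}>0$; matching this profile to the intrinsic polar $(r,\theta)$ smoothing of a model rounded corner of $Y\times[0,1]$ yields the missing smooth change of coordinates, and $\Psi$ extends to a diffeomorphism globally. Connectedness of $X$ forces $Y$ to be connected since every point of $X$ lies on a trajectory meeting $Y$. The main obstacle is precisely this last smoothing across $\partial_2 X$: the square-root exit-time behaviour on the ``$X$-side'' must be identified, via a smooth change of coordinates, with the intrinsic polar structure of a rounded corner on the ``model side''. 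This identification relies crucially on the Morse-type local form for $g$ provided by boundary genericity; without it, the tangencies could be too degenerate to be absorbed by any smooth corner-rounding diffeomorphism.
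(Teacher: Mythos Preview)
Your proof is correct and follows the same strategy as the paper: use the $v$-flow to identify $X$ with a lens over $\d_1^+X$, i.e., with $(\d_1^+X)\times[0,1]$ modulo collapsing the edge fibers over $\d_2X$, and observe that the vertical field on a rounded product is boundary convex. The paper's argument is much terser---it simply asserts that the downward flow retracts $X$ onto $\d_1^+X$ and that rounding corners of the collapsed product yields the diffeomorphism---whereas you supply the local analysis (the Poincar\'e map $\phi$, the implicit-function computation at $\d_2X$, and the square-root exit-time profile matched against the polar model of a rounded corner) that the paper leaves implicit.
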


\begin{proof} Indeed, if such convex $v$ exists, $\d_1^+X := \d_1^+X(v)$ must be a deformation retract of $X$: just use the down flow to produce the retraction. Therefore, when $\d_2^+X(v) = \emptyset$, then $X$ is homeomorphic to the quotient space $\{(\d_1^+X) \times [0, 1]\}/\sim$, where the equivalence relation "$\sim$" is defined by collapsing each segment $\{x\times [0,1]\}_{x \in \d_2^-X}$ to a point. If we round the corners generated in the collapse, we will get a diffeomorphism between $X$ and the ``lens"  $\{(\d_1^+X) \times [0, 1]\}/\sim$ (see Fig. 5). 

On the other hand, any product $Y \times [0, 1]$, whose  conners $\d Y \times \d([0, 1])$ being rounded, admits a field of the desired boundary convex type. 
\end{proof}

\begin{cor}\label{cor4.2} For all $n \neq 4$, any  smooth compact contractible $(n +1)$-manifold $X$, which admits a boundary convex traversing field, is diffeomorphic to the standard $(n +1)$-disk.
\end{cor}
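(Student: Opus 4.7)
I will first invoke Lemma~\ref{lem4.2}, which furnishes a diffeomorphism between $X$ and a smoothing of $Y\times[0,1]$, where $Y:=\d_1^+X(v)$ is a compact connected smooth $n$-manifold. The projection $Y\times[0,1]\to Y$ exhibits $Y$ as a strong deformation retract of $X$, so the contractibility of $X$ descends to $Y$. The task becomes showing that the smoothed product is diffeomorphic to $D^{n+1}$; in low dimensions this will follow from $Y\cong D^n$, while in high dimensions I will instead apply the $h$-cobordism theorem directly to $X$.

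The low-dimensional cases are essentially classical. For $n=1$ one has $Y\cong[0,1]$; for $n=2$ the classification of compact surfaces forces $Y\cong D^2$. For $n=3$, Poincar\'{e}--Lefschetz duality applied to the contractible $Y$ shows that $\d Y$ is a closed orientable simply connected surface with $\chi(\d Y)=2$, hence $\d Y\cong S^2$, and Perelman's theorem then yields $Y\cong D^3$. Smoothing corners identifies $Y\times[0,1]$ with $D^{n+1}$ in each of these cases.

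For $n\geq 5$ I will apply Smale's smooth $h$-cobordism theorem. The smoothed boundary $\d X$ is diffeomorphic to the double $Y\cup_{\d Y}Y$; since $\d Y$ is connected in this range, van Kampen gives $\pi_1(\d X)=1$, and Mayer--Vietoris (combined with the fact that Lefschetz duality forces $\d Y$ to have the homology of $S^{n-1}$) yields $H_\ast(\d X)\cong H_\ast(S^n)$, so $\d X$ is a homotopy $n$-sphere. Drill a small smooth open ball $B\subset\textup{int}(X)$ and set $W:=X\setminus B$: the long exact sequence of $(X,W)$ combined with excision gives $H_\ast(W)\cong H_\ast(S^n)$, and a short Mayer--Vietoris computation on $X=W\cup_{\d B}B$ shows that $[\d X]=\pm[\d B]$ generates $H_n(W)$; since $\dim X\geq 6$ ensures simple connectivity of all spaces involved, both inclusions $\d X\hookrightarrow W$ and $\d B\cong S^n\hookrightarrow W$ are homotopy equivalences. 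Thus $W$ is a simply connected smooth $h$-cobordism of dimension $n+1\geq 6$, Smale's theorem gives $W\cong S^n\times[0,1]$, and regluing $B$ yields $X\cong D^{n+1}$.

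The main obstacle, and the source of the exclusion $n\neq 4$, is the failure of smooth $h$-cobordism in dimension $5$: for $n=4$ the cobordism $W$ would be between $4$-manifolds, where gauge-theoretic obstructions can prevent triviality, and a direct identification of the contractible $4$-manifold $Y$ with $D^4$ is also out of reach since the smooth $4$-dimensional Poincar\'{e} conjecture is open. Outside of this exceptional dimension, the only non-elementary inputs are Perelman's theorem (for $n=3$) and Smale's $h$-cobordism theorem (for $n\geq 5$).
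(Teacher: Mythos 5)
Your proposal is correct, and in the high-dimensional case it takes a different (and in fact more robust) route than the paper. Both arguments begin identically: Lemma~\ref{lem4.2} gives $X\cong Y\times[0,1]$ with corners rounded, $Y=\d_1^+X(v)$ contractible, and for $n=3$ both of you quote Perelman to identify $Y$ with $D^3$. But for $n\geq 5$ the paper asserts that ``any fake $n$-disk is diffeomorphic to the standard disk'' by the $h$-cobordism theorem, i.e.\ it applies the $h$-cobordism theorem to the $n$-dimensional factor $Y$; taken literally this needs $\d Y$ to be simply connected, which does \emph{not} follow from contractibility of $Y$ alone (Mazur- and Newman-type contractible manifolds have non-simply-connected boundary), and for $n=5$ the resulting cobordism has dimension $5$, below the range of Smale's theorem. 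You instead apply the $h$-cobordism theorem to $X$ itself: since $\d X$ is the double of $Y$ along its connected boundary, van Kampen and Mayer--Vietoris make $\d X$ a homotopy $n$-sphere regardless of $\pi_1(\d Y)$, and drilling a ball produces a simply connected $h$-cobordism of dimension $n+1\geq 6$. This sidesteps both issues and covers $n=5$ cleanly, at the cost of the slightly longer homological bookkeeping you carry out (all of which checks out, including $[\d B]$ generating $H_n(W)\cong\Z$ and the Whitehead step). Your low-dimensional cases $n=1,2$ and your explanation of the exclusion $n=4$ are fine; the only cosmetic quibble is that for $n=3$ duality gives $\d Y$ the \emph{homology} of $S^2$, and simple connectivity comes from the surface classification rather than from duality itself.
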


\begin{proof}  By Lemma \ref{lem4.2}, $X$ is diffeomorphic to a product of a fake $n$-disk $Y$ with $[0, 1]$, the corners of the product being rounded. 

For $n = 3$, by Perelman's results \cite{P1}, \cite{P2}, $Y$ is diffeomorphic to the standard $3$-disk. Thus $X$ is diffeomorphic to the standart $4$-disk.

For $n = 4$, we do not know whether $Y$ is a standard $4$-disk.

For $n \geq 5$, the $h$-cobordism theorem \cite{Sm} implies that any fake $n$-disk is diffeomorphic to the standard disk.

This leaves only the case of $5$-dimensional $X$ wide open.
\end{proof}

We notice that $H_n(X; \Z) \neq 0$ is an \emph{obstruction} to finding \emph{boundary convex} traversing $v$ on a $(n +1)$-dimensional manifold $X$ with a connected boundary. 

\begin{cor}\label{cor4.3} Let $X$ be a smooth connected compact $(n+1)$-manifold with boundary, which admits a boundary \emph{convex} traversing field. 

If $H_n(X; \Z) \neq 0$, then $X$ is diffeomorphic to the product $Y \times [0, 1]$, where $Y$ is a \emph{closed} manifold.

In particular, no connected $X$ with boundary $\d X \neq \emptyset$, whose number of connected components differs from two, and with the property $H_n(X; \Z) \neq 0$ admits a boundary convex traversing field. 
\end{cor}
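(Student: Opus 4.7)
The plan is to extract the structure of $X$ directly from Lemma \ref{lem4.2} and then read off the homological restriction.

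First, I would invoke Lemma \ref{lem4.2}: the existence of a boundary convex traversing field on the connected $(n+1)$-manifold $X$ forces a diffeomorphism between $X$ and the corner-rounded product $Y \times [0,1]$, where $Y := \d_1^+X(v)$ is a compact connected $n$-manifold. Since rounding corners does not change the underlying topological type, and $Y \times [0,1]$ deformation retracts onto $Y$, one has $H_n(X;\Z) \cong H_n(Y;\Z)$.

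Next, I would argue that $Y$ must be closed. Suppose, for contradiction, that $\d Y \neq \emptyset$. Then $Y$ is a compact connected $n$-manifold with nonempty boundary, and such a manifold has the homotopy type of a CW complex of dimension at most $n - 1$ (for instance via a handle decomposition with no top-dimensional handles, obtained by flowing in from $\d Y$). Consequently $H_n(Y;\Z) = 0$, contradicting the hypothesis $H_n(X;\Z) \neq 0$. Therefore $\d Y = \emptyset$; but when $Y$ is closed the corner-rounding is vacuous and $X$ is genuinely the smooth product $Y \times [0,1]$, establishing the first assertion.

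For the ``in particular'' clause, once $X \cong Y \times [0,1]$ with $Y$ closed and $X$ connected, $Y$ is forced to be connected, so $\d X = Y \times \{0\} \sqcup Y \times \{1\}$ has exactly two components. Hence a connected $X$ whose boundary has a number of components other than two cannot be of this form, and by the first part it admits no boundary convex traversing field when $H_n(X;\Z) \neq 0$.

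The only point that might look substantive is the vanishing $H_n(Y;\Z) = 0$ for a compact connected $n$-manifold with boundary; this is standard, following in the orientable case from Poincar\'e--Lefschetz duality $H_n(Y;\Z) \cong H^0(Y,\d Y;\Z) = 0$ (the right-hand side vanishes because $Y$ is connected and $\d Y \neq \emptyset$), and in the non-orientable case by passing to the orientation double cover or by direct inspection of top-dimensional singular chains. Everything else is bookkeeping built on Lemma \ref{lem4.2}, so I do not anticipate any genuine obstacle.
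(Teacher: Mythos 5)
Your proposal is correct and follows essentially the same route as the paper: the paper likewise reduces to the structure provided by Lemma \ref{lem4.2} (via the deformation retraction of $X$ onto the connected stratum $\d_1^+X$), observes that if $\d_2X=\d(\d_1^+X)\neq\emptyset$ then $\d_1^+X$ has the homotopy type of an $(n-1)$-complex so $H_n(X;\Z)\cong H_n(\d_1^+X;\Z)=0$, and concludes $\d_1^+X$ is closed, whence $X\cong Y\times[0,1]$ with two boundary components. Your added justification of the vanishing of top homology via Poincar\'e--Lefschetz duality merely fills in a step the paper states without proof.
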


\begin{proof} If such boundary convex traversing field $v$ exists, $\d_1^+X$ must be a deformation retract of $X$. Therefore, for a connected $X$, $\d_1^+X$ must be connected as well. 

On the other hand, if  $\d(\d_1^+X) = \d_2X \neq \emptyset$, then the connected $\d_1^+X$ must be of a homotopy type of a $(n-1)$-dimensional complex. In such a case, the groups $H_n(\d_1^+X; \Z) \approx H_n(X; \Z)$ must vanish. 

Thus when $H_n(X; \Z) \neq 0$ and $v$ is boundary convex, the only remaining option is $\d_2X = \emptyset$, which implies that $\d(\d_1^+X) =  \emptyset$---the manifold $\d_1^+X$ is closed. In such a case, $X$ is a product of a connected closed $n$-manifold with an interval; so the boundary $\d X$ must be the union of two diffeomorphic components.
\end{proof}

As with the boundary convex traversing fields, perhaps, there are topological obstructions to the existence of a boundary \emph{concave} traversing field on a given manifold? At the present time, the contours of the universe of such obstructions are murky. We know only that the disk $D^2$ does not admit a non-vanishing boundary concave field (see Example 4.4). 

\begin{lem}\label{lem4.3} If a boundary generic vector field $v$ on an \emph{even}-dimensional compact orientable manifold $X$ is boundary concave, then its index 
$$\mathsf{Ind}(v) = \chi(X) + \chi(\d_1^+ X) = \chi(X) + \frac{1}{2}\cdot \chi(\d_2 X).$$ 

If a boundary generic vector field $v$ on an \emph{odd}-dimensional compact orientable manifold $X$ is boundary concave, then its index $$\mathsf{Ind}(v) = \chi(X) - \chi(\d_1^+X).$$ Thus, for all boundary concave fields $v$ with a fixed value of index $\mathsf{Ind}(v)$,  the Euler number $\chi(\d_1^+ X)$ is a topological invariant. 
\end{lem}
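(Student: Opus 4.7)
The plan is to reduce the statement to Corollary \ref{cor2.1}, exploiting the collapse of the Morse stratification above level $2$ forced by concavity. First I would observe that the hypothesis $\d_2^- X(v) = \emptyset$ forces $\d_j X(v) = \emptyset$ for all $j \geq 3$. Indeed, the two manifolds $\d_2^+ X$ and $\d_2^- X$ share the common boundary $\d_3 X$ (exactly as $\d_1^\pm X$ share the common boundary $\d_2 X$), so emptiness of $\d_2^- X$ forces $\d_3 X = \emptyset$, and the collapse then propagates inductively to every higher stratum $\d_j X$, $j \geq 3$.

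Plugging this into $\mathsf{Ind}(v) = \sum_{j=0}^{n+1} (-1)^j \chi(\d_j^+ X)$ from Corollary \ref{cor2.1}, only three summands survive; using $\d_2^+ X = \d_2 X$ one obtains
$$\mathsf{Ind}(v) \;=\; \chi(X) - \chi(\d_1^+ X) + \chi(\d_2 X). \qquad (\star)$$
In the odd-dimensional case $n+1$ is odd, so $\d_2 X$ is a \emph{closed} manifold of odd dimension $n-1$; hence $\chi(\d_2 X) = 0$ and $(\star)$ reduces at once to $\mathsf{Ind}(v) = \chi(X) - \chi(\d_1^+ X)$, as claimed. The final invariance assertion is then immediate: fixing $\mathsf{Ind}(v)$ fixes $\chi(\d_1^+ X)$.

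For the even-dimensional case $n+1$ is even, so $\d_1 X$ is a closed odd-dimensional manifold, forcing $\chi(\d_1 X) = 0$. Inclusion-exclusion applied to $\d_1 X = \d_1^+ X \cup \d_1^- X$ with intersection $\d_2 X$ gives
$$\chi(\d_2 X) \;=\; \chi(\d_1^+ X) + \chi(\d_1^- X).$$
To match $\chi(\d_1^+ X)$ with $\chi(\d_1^- X)$, I would invoke Remark 2.1: when $n+1$ is even one has $\d_2^\pm X(-v) = \d_2^\pm X(v)$, so $-v$ is again boundary concave, while $\d_1^+ X(-v) = \d_1^- X(v)$. Moreover, for nondegenerate zeros of a field on an even-dimensional manifold the local index is invariant under sign change, so $\mathsf{Ind}(-v) = \mathsf{Ind}(v)$. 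Applying $(\star)$ to $-v$ and subtracting the two instances of $(\star)$ yields $\chi(\d_1^+ X) = \chi(\d_1^- X)$, hence $\chi(\d_2 X) = 2\chi(\d_1^+ X)$. Substituting back into $(\star)$ produces $\mathsf{Ind}(v) = \chi(X) + \chi(\d_1^+ X) = \chi(X) + \tfrac{1}{2}\chi(\d_2 X)$.

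The only delicate point is the symmetry step in the even case: one must verify that $v \mapsto -v$ preserves boundary concavity in even ambient dimension and leaves the global index unchanged. Both are immediate from Remark 2.1 and from the Morse normal form $v = (a_1 x_1, \dots, a_{n+1} x_{n+1})$ near an isolated zero, since $(-1)^{n+1} = +1$ when $n+1$ is even; everything else is arithmetic combining $(\star)$ with the closed-manifold vanishing $\chi = 0$ in odd dimensions.
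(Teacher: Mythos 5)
Your argument is correct, and its skeleton coincides with the paper's: concavity gives $\d_2X=\d_2^+X$ with $\d_jX=\emptyset$ for $j\geq 3$, the Morse formula collapses to $\mathsf{Ind}(v)=\chi(X)-\chi(\d_1^+X)+\chi(\d_2X)$, and the odd-dimensional case is finished identically by $\chi(\d_2X)=0$. You are in fact more explicit than the paper about why the strata above level $2$ vanish, which is a point worth spelling out. The only real divergence is in the even-dimensional case: the paper establishes $\chi(\d_2X)=2\chi(\d_1^+X)$ in one line by applying the classical identity $\chi(\d Y)=2\chi(Y)$ for a compact odd-dimensional manifold $Y$ directly to $Y=\d_1^+X$ (whose boundary is $\d_2X$), whereas you reach the same identity through a longer detour: $\chi(\d_1X)=0$, inclusion--exclusion $\chi(\d_2X)=\chi(\d_1^+X)+\chi(\d_1^-X)$, and then the symmetry $v\mapsto -v$ (which, as you correctly verify via Remark 2.1 and the parity of $(-1)^{n+1}$, preserves both concavity and the global index in even ambient dimension) to force $\chi(\d_1^+X)=\chi(\d_1^-X)$. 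Your symmetry step is valid and has the minor virtue of exhibiting the equality $\chi(\d_1^+X)=\chi(\d_1^-X)$ as a consequence of the concave structure itself, but it is ultimately a roundabout route to a fact (the doubling formula) that holds for any compact odd-dimensional manifold with boundary; the paper's direct appeal to that formula is shorter and is the intended argument.
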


\begin{proof} For a boundary concave field $v$, $\d(\d_1^+X) := \d_2X = \d_2^+X$. Therefore, the Morse formula \ref{eq2.2} reduces to the equation 
\begin{eqnarray}\label{eq4.4} 
\mathsf{Ind}(v) = \chi(X) - \chi(\d_1^+X) +\chi(\d_2X).
\end{eqnarray} 

Recall that,  for any orientable \emph{odd}-dimensional manifold $Y$,  $\chi(\d Y) =  2\cdot \chi(Y)$. Therefore, when $\dim(X) \equiv 0 \; mod \; 2$, we get   $2\cdot\chi(\d_1^+X) =  \chi(\d_2X)$. Thus formula \ref{eq4.4} transforms into $$\mathsf{Ind}(v) = \chi(X) + \chi(\d_1^+X) = \chi(X) + \frac{1}{2}\cdot \chi(\d_2 X).$$ 

For an odd-dimensional $X$, the closed manifold $\d_2X$ is odd-dimensional, so $\chi(\d_2X) = 0$. Therefore $$\mathsf{Ind}(v) = \chi(X) - \chi(\d_1^+X).$$
\end{proof}







\begin{cor}\label{cor4.4} Let $X$ be a $4$-dimensional oriented smooth and compact manifold with boundary.

If $\chi(X) < 0$, then for any  boundary generic concave vector field $v$ on $X$ of index $0$, the locus $\d_2X = \d_2^+X$ contains at least  $|\chi(X)|$ two-dimensional spheres. 
\end{cor}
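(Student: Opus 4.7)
The plan is to combine Lemma \ref{lem4.3} with the classification of closed orientable surfaces by their genus; no further input is really needed. First I would invoke the even-dimensional case of Lemma \ref{lem4.3}: since $\dim X = 4$, $v$ is boundary concave, and $\mathsf{Ind}(v)=0$, the lemma gives
\[
0 \;=\; \mathsf{Ind}(v) \;=\; \chi(X) + \tfrac{1}{2}\,\chi(\d_2 X),
\]
so $\chi(\d_2 X) = -2\chi(X) = 2|\chi(X)|$, the last equality using the hypothesis $\chi(X)<0$.

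Next I would check that $\d_2 X$ is a \emph{closed orientable} $2$-manifold. Closedness follows from boundary concavity: the cuspidal locus $\d_3 X$ is the common boundary of $\d_2^+X$ and $\d_2^-X$, so $\d_3 X \subset \d_2^- X = \emptyset$; hence $\d(\d_2 X) = \emptyset$ and in fact $\d_2 X = \d_2^+ X$. Orientability is obtained by taking boundary orientations twice: from the oriented $X$ to $\d_1 X$, then from $\d_1^+ X \subset \d_1 X$ to $\d_2 X = \d(\d_1^+ X)$.

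Now I would decompose $\d_2 X$ into its connected components $\{\Sigma_\alpha\}$. Each $\Sigma_\alpha$ is a closed orientable surface of some genus $g_\alpha \geq 0$, with Euler characteristic $\chi(\Sigma_\alpha) = 2 - 2g_\alpha \leq 2$, and equality holds precisely when $\Sigma_\alpha$ is a $2$-sphere. Writing $s$ for the number of sphere components of $\d_2 X$,
\[
2|\chi(X)| \;=\; \chi(\d_2 X) \;=\; 2s + \sum_{\alpha:\, g_\alpha \geq 1}(2 - 2g_\alpha) \;\leq\; 2s,
\]
so $s \geq |\chi(X)|$, which is exactly the claim.

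The proof is essentially a one-line computation once Lemma \ref{lem4.3} is in hand, so there is no real obstacle; the only point that requires care is confirming that $\d_2 X$ has no boundary (which uses concavity through the fact that $\d_3 X$ sits inside $\d_2^- X$) and that it is orientable, so that higher-genus components can only \emph{decrease} $\chi(\d_2 X)$ and thereby force more sphere components to compensate.
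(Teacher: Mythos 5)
Your proposal is correct and follows exactly the paper's route: apply Lemma \ref{lem4.3} with $\mathsf{Ind}(v)=0$ to get $\chi(\d_2X)=-2\chi(X)>0$, then use the classification of closed orientable surfaces to see that only sphere components contribute positively to $\chi(\d_2X)$. You merely spell out in more detail the steps the paper leaves implicit (closedness of $\d_2X$ via $\d_3X=\d(\d_2^-X)=\emptyset$, orientability, and the component-by-component genus count).
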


\begin{proof} Since $\d_2X$ is a closed orientable $2$-manifold, its Euler number is positive only if $\d_2X$ contains sufficiently many $2$-spheres.  By Lemma \ref{lem4.3}, $\chi(\d_2X) = -2\cdot \chi(X) > 0$. Therefore $\d_2X$ contains at least  $|\chi(X)|$ two-dimensional spheres.
\end{proof}

\smallskip
\noindent {\bf Example 4.4.} Let $X = D^2$, the $2$-dimensional ball. If $v \neq 0$ on $X$, then by the Morse formula, $$1 - \chi(\d_1^+X) +  \chi(\d_2^+X) = 0.$$ If $\d_1^+X$ consists of $k$ arcs, then by this formula, $\#(\d_2^+X) = k-1$. At the same time, $\#(\d_2X) = 2k$. Therefore, $\#(\d_2^-X) = k +1 > 0$. So we conclude that $D^2$ does not admit a non-vanishing field with $\d_2^-X = \emptyset$, that is, a boundary concave field. \smallskip

At the same time, if we delete any number of disjoint open disks  from $D^2$, the remaining surface $X$ admits a concave non-vanishing gradient-like field: indeed, consider the radial field in an annulus $A$ and delete from $A$ any non-negative number of small round disks. The radial field $v$ on $A$, being restricted to $X$, is evidently of the gradient type and concave with respect to $\d X$.

Note that, if a connected compact surface $X$ admits a generic traversing concave field $v$, then $X$ is homeomorphic either to a thickening  of a finite graph $\Gamma$ whose vertexes all have valency $3$, or to an annulus.  
\hfill\qed
 \smallskip
 
In the previous example, we have seen that the disk $D^2$ does not admit a non-vanishing concave field. In contrast, $D^3$ does admit a boundary generic  concave non-vanishing field: just consider the restriction of the Hopf field $v$ on $S^3$ to the northern hemisphere $D^3 \subset S^3$. For the unitary disk $D^3 \subset \R^3$ centered at the origin, informally, we can describe $v$ as the sum of the velocity field of the solid $D^3$, spinning around the $z$-axis, with the solenoidal field of the loop $L := \{x^2 + y^2 = 4/9,\, z = 0\}$.  However,  this field $v$ is not of the traversing type: it has closed trajectories (residing in the solid torus $dist(\sim, L) \leq 1/3$). 
\smallskip

These observations encourage us to formulate

\begin{conj}\label{conj4.1} The standard $(n+ 1)$-disk  $D^{n+1}$ does not admit a  traversing boundary concave vector field. \hfill\qed
\end{conj}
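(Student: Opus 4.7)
I argue by contradiction. Suppose $D^{n+1}$ admits a traversing boundary concave vector field $v$. By Corollary \ref{cor4.1} and Lemma \ref{lem4.1}, $v = \nabla_g f$ for some Riemannian metric $g$ and Morse function $f : D^{n+1} \to \R$; since $v$ is nowhere zero, $f$ has no interior critical points and $\mathsf{Ind}(v) = 0$. After a small perturbation $(f, v)$ becomes a boundary generic Morse pair in the sense of Definition \ref{def2.4}. Because the extrema of $f$ are attained on $S^n$, the global minimum lies in $\Sigma_1^+$ and the global maximum lies in $\Sigma_1^-$; in particular both sets are nonempty. Let $v_1$ denote the gradient of $f|_{S^n}$.

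I would then extract the ``holographic'' boundary structure forced by concavity. By the concave case of Theorem \ref{th4.1}, no ascending $v_1$-trajectory on $S^n$ joins a point of $\Sigma_1^+$ to a point of $\Sigma_1^-$; equivalently, the unstable manifolds of the critical points in $\Sigma_1^+$ are disjoint from the stable manifolds of those in $\Sigma_1^-$. Combined with the fact that $v_1$ points strictly into $\partial_1^+X$ along $\partial_2 X$ (a reformulation of concavity), this makes $\partial_1^+X$ forward-invariant under the $v_1$-flow and realizes each of $\partial_1^\pm X$ as a handle body whose handles correspond only to critical points of the matching polarity.

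For the base case $n = 1$ this already suffices: by Lemma \ref{lem4.3} applied to the even-dimensional $X = D^2$, one gets $0 = \mathsf{Ind}(v) = \chi(X) + \chi(\partial_1^+X) = 1 + \chi(\partial_1^+X)$, forcing $\chi(\partial_1^+X) = -1$; but $\partial_1^+X$ is a compact $1$-manifold (a disjoint union of arcs and circles) and so has $\chi \geq 0$, a contradiction (this recovers Example 4.4). For $n \geq 2$ I would attempt a descending induction along the Morse stratification: the field $v_1$ on the compact $n$-manifold $\partial_1^+X$ is gradient-like for $f|_{\partial_1^+X}$ and points inward along $\partial(\partial_1^+X) = \partial_2 X$, so $\partial_1^+X$ behaves like a compact ``concave'' manifold one dimension down. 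Using Theorem \ref{th3.2} and Corollary \ref{cor3.2} to deform $(f, v)$ within the space of $f$-gradient-like fields, the goal is to reduce to a configuration where the inductive hypothesis applies, and then combine the Morse formula \ref{eq2.2} with the parity constraints of Lemma \ref{lem4.3} to force incompatible Euler-characteristic identities along the stratification $\{\partial_j^+X\}$.

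The main obstacle is precisely this inductive step. The restricted field $v_1$ on $\partial_1^+X$ is generally \emph{not} traversing, since its trajectories accumulate at the interior critical points $\Sigma_1^+ \subset \partial_1^+X$, so Lemma \ref{lem4.1} does not apply directly and the inductive hypothesis cannot be invoked as stated. Eliminating these critical points by a finger-move as in Lemma \ref{lem3.3} introduces new boundary singularities and may alter the Morse stratification of $\partial_1^+X$ unpredictably. Pushing this reduction through---or alternatively replacing it with a genuinely new ``holographic'' invariant that encodes the obstruction directly at the level of the $v_1$-flow on $S^n$---is the crux of the problem, and is the reason the statement is posed as a conjecture rather than a theorem.
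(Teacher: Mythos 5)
This statement is posed in the paper as Conjecture \ref{conj4.1}; the paper offers no proof of it, only supporting evidence (Example 4.4 for $D^2$, and the Hopf-field example on $D^3$ showing that a boundary concave \emph{non-traversing} field does exist, so that traversality must enter any proof in an essential way). Your proposal, read as a proof, therefore has a genuine gap, and to your credit you name it yourself: the descending induction along the Morse stratification does not close, because the projected field $v_1$ on $\partial_1^+X$ vanishes on $\Sigma_1^+$ and hence is not traversing, so neither Lemma \ref{lem4.1} nor the inductive hypothesis applies to it, and the finger-move elimination of Lemma \ref{lem3.3} changes the boundary stratification uncontrollably. What you do establish is exactly what the paper establishes: the case $n=1$, via the Euler-characteristic obstruction $0=\mathsf{Ind}(v)=\chi(D^2)+\chi(\partial_1^+X)$ from Lemma \ref{lem4.3} forcing $\chi(\partial_1^+X)=-1$, which is impossible for a compact $1$-manifold; this is equivalent to the count in Example 4.4. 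Note that this Euler-characteristic route cannot extend on its own: already for $D^3$ Lemma \ref{lem4.3} gives $\chi(\partial_1^+X)=\chi(D^3)=1$, which is perfectly realizable (e.g.\ $\partial_1^+X$ a disk), consistent with the existence of the non-traversing concave Hopf field.

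Your preliminary reductions are sound (traversing $\Rightarrow$ gradient-like by Lemma \ref{lem4.1}, hence $\mathsf{Ind}(v)=0$ and no interior critical points; global min in $\Sigma_1^+$, global max in $\Sigma_1^-$; the concave half of Theorem \ref{th4.1} forbidding ascending $v_1$-trajectories from $\Sigma_1^+$ to $\Sigma_1^-$; forward-invariance of $\partial_1^+X$ under the $v_1$-flow). These are reasonable ingredients for an eventual argument and are consistent with the paper's ``holographic'' philosophy. But the assertion that each $\partial_1^{\pm}X$ is thereby ``a handlebody whose handles correspond only to critical points of the matching polarity'' is not justified and would itself require an argument. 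As it stands, the proposal is a plausible strategy plus a correct base case, not a proof; since the paper itself leaves the statement as a conjecture, there is no proof in the paper to compare it against beyond the $n=1$ computation, which you have reproduced faithfully.
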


The construction of a boundary concave field on a $2$-disk with holes (see Example 4.4) admits a simple generalization.
\smallskip

\noindent {\bf Example 4.5.}
Consider a closed $n$-manifold $Y$. Let $\{Z_i \subset Y\}_{1 \leq i \leq s}$ be compact submanifolds also of dimension $n$. Let $W := Y \times [0, 1]$. We pick $s$ disjointed close intervals $\{I_i\}_i$ in the interval $[0, 1]$. Then we form the product $U_i := Z_i \times I_i$. By rounding the corners of $U_i$, we get a $(n+1)$-manifold $V_i \subset U_i$ so that each segment $z \times I_i$, where $z \in  \textup{Int}(Z_i)$, hits $V_i$ along a closed segment, and each segment $z \times I_i$, where $z \in  \d(Z_i)$, hits $V_i$ along a singleton.  

Form the manifold $X := W \setminus \coprod_i V_i$. Its boundary consists of two copies of $Y$ together with the disjoint union of $\d V_i$ (they are the doubles of $Z_i$'s). The obvious vertical field $v$ on $W$, being restricted to $X$, is boundary concave. In fact, $\d_1^+X(v) = Y \times \{0\} \coprod A$, where $A \approx \coprod_i Z_i$, and  $\d_2^+X(v) \approx \coprod_i \d Z_i$. 
\hfill\qed
\smallskip

These examples lead to few interesting questions:
\smallskip

\noindent {\bf Question 4.1.} Which compact manifolds admit \emph{boundary concave} non-vanishing vector fields? Which compact manifolds admit \emph{boundary concave} non-vanishing \emph{gradient-like} fields? \hfill\qed
\smallskip

Despite the ``natural" flavor of these questions, we have a limited understanding of the general answers. Nevertheless, feeling a bit adventurous, let us state briefly what kind of answer one might anticipate. This anticipation is based on a better understanding of boundary concave traversing fields on $3$-folds (see \cite{BP}, \cite{K}).

We conjecture that an $(n+1)$-dimensional $X$ admits a traversing concave field $v$ such that $\d_2X(v) = \d_2^+X(v) \neq \emptyset$ if (perhaps, if and only if) $X$ has a ``special trivalent" simple $n$-dimensional spine $K \subset T_X$, where $T_X$ denotes a smooth triangulation of $X$ (see \cite{Ma} for the definitions of simple spines and for the description of their local topology). Here ``special trivalent" means that each $(n-1)$-simplex from the singular set $SK$ of $K$ is adjacent to exactly \emph{three} $n$-simplexes from $K$. Moreover, the vicinity of $SK$ in $K$ admits an \emph{oriented branching} as in \cite{BP}. 
\smallskip   

When the $(n+1)$-manifold in question is specially manufactured from a closed $(n+1)$-manifold by removing a number of $(n+1)$-disks, another paper from this series  will provide us with a wast gallery of manifolds which admit traversing concave fields. 

\smallskip

\section{Morse Stratifications of the Boundary $3$-convex and $3$-concave Fields}

We have seen that the boundary $2$-convexity of traversing fields on $X$ has strong implications for the topology of $X$ (for example, see Lemmas \ref{lem4.2}-\ref{lem4.3}, and Corollaries  \ref{cor4.2}-\ref{cor4.4}).

By itself, the boundary 3-convexity and $3$-concavity of traversing fields has no topological significance for the topology of $3$-folds: we have proved in Theorem 9.5 from \cite{K} that, for every 3-fold $X$,  any boundary generic $v$ of the gradient type can be deformed into new such field $\tilde v$ with $\d_3 X(\tilde v) = \emptyset$. However, in conjunction with certain topological constraints on $\d_1^+X$ (like being connected), the boundary 3-convexity has topological implications (see \cite{K}, Corollary 2.3 and Corollary 2.5). 
\smallskip

These observations  suggest two general questions: 
\smallskip

\noindent {\bf Question 5.1.} 
\begin{itemize}
\item Given a manifold $X$, which patterns of  the stratifications $\{\d_j^+X(v) \subset \d_j X(v)\}_j$ are realizable by boundary generic \emph{traversing} fields $v$ on $X$?\footnote{Theorem \ref{th5.1} and Corollary \ref{cor5.1} below gives just a taste of a possible answer.}
 \item Given two such fields, $v_0$ and $v_1$, can we find a linking path $\{v_t\}_{t \in [0, 1]}$ in the space $\mathcal V_{\mathsf{trav}}(X)$ that avoids certain types of singularities?\footnote{When $\dim(X) = 3$,  Theorem 9.5 in \cite{K} addresses some of these questions.} Specifically, if  for some $j > 0$, $\d_j X(v_0) = \emptyset = \d_j X(v_1)$, is there a linking path so that $\d_{j +1}X(v_t) = \emptyset$ for all $t \in [0, 1]$?
\end{itemize}
\smallskip

\noindent {\bf Remark 5.1.} The property of the field $v$ in Question 5.1 being traversing (equivalently, boundary generic and of the gradient type) is the essence of the question. For just boundary generic fields, there are no known restrictions on the patterns of  $\{\d_j^+X(v) \subset \d_j X(v)\}_j$. 

Let us illustrate this remark for the fields $v$ such that $\d_3X(v) = \emptyset$. We divide the boundary  $\d_1X$ into two complementary domains, $Y^+$ and $Y^-$, which share a common boundary $\d Y^+ = \d Y^-$---a closed manifold of dimension $n-1$. It may have several connected components. Next, we divide the manifold $\d Y^+$ into two complementary closed manifolds $Z^+$ and $Z^-$. 

We claim that it is possible to find a boundary generic field $v$ with the properties: $\d_1^\pm X(v) = Y^\pm$,  $\d_2^\pm X(v) = Z^\pm$, and $\d_3X(v) = \emptyset$. The construction of such $v$ is quite familiar (see the arguments in Theorem \ref{th3.2}).

We start with a field $\nu_1$ which is normal to $\d Y^+$ and points outside of $Y^+$ along $Z^-$ and inside of $Y^+$ along $Z^+$. We extend $\nu_1$ to a field $v_1$ tangent to the boundary $\d_1X$ so that $v_1$ has only isolated zeros. Let $\nu$ be the outward normal field of $\d_1X$ in $X$ and $h: \d_1X \to \R$ a smooth function such that $0$ is its regular value and $$h^{-1}((-\infty, 0]) = Y^-,\quad  h^{-1}([0, +\infty)) = Y^+.$$ Along $\d_1X$, form the field $v' = v_1 + h\cdot \nu$ and extend it to a field $v$ on $X$ with isolated singularities in $\textup{int}(X)$. By its construction, $v$ has all the desired properties. Note that here we do not insist on the property $v \neq 0$. \hfill\qed

\smallskip
 
In our inquiry, we are inspired by the Eliashberg surgery theory of folding maps \cite{E1}, \cite{E2}.  In many cases, Eliashberg's results give criteria for realizing given patterns of $\d_2^\pm X \subset \d_1^\pm X$, provided that $\d_3 X = \emptyset$, thus answering Question 5.1. Let us state one such result, Theorem 5.3 from \cite{E2}. 

\begin{thm}[{\bf Eliashberg}]\label{th5.1}  Let $X \subset \R^{n+1}$, $n \geq 2$, be a compact connected smooth submanifold of dimension $(n+1)$. Consider two disjoint  closed and nonempty  $(n-1)$-submanifolds $Z^+$ and $Z^-$ of $\d X$ whose union separates $\d X$ into two complementary $n$-manifolds, $Y^+$ and $Y^-$. Let $\nu$ be the outward normal field of $\d X$ in $X$, and denote by $\deg(\nu)$ the degree of the Gauss map $G_\nu: \d X \to S^n$.  Let $\pi : \R^{n + 1} \to \R^n$ be a linear surjection.

Then the  topological constraints 
\begin{itemize}
\item $\chi(Z^+) - \chi(Z^-) = 0$, \;\;\;\;\;\qquad when $n \equiv 0\; \mod \;2$
\item $\chi(Z^+) - \chi(Z^-) = 2\cdot \deg(\nu)$,  when $n \equiv 1 \; \mod \;2$
\end{itemize} 
are necessary and sufficient for the existence of an orientation-preserving diffeomorphism $h: \R^{n + 1} \to \R^{n + 1}$ with the following properties: 
\begin{itemize}
\item $Z^+ \cup Z^-$ is the fold locus of the map $(\pi \circ h): \d X \to \R^n$, 
\item $\pi \circ h$, being restricted to $Z^+ \cup Z^-$, is a immersion, and the image $(\pi \circ h)(Z^+ \cup Z^-)$ has only transversal self-intersections in $\R^n$,
\item the differential $D(\pi \circ h)$ takes the normal field $\nu|_{Z^+}$ to the field inward  normal to $(\pi\circ h)(Z^+)$ in  $(\pi\circ h)(\d X)$, 
\item the differential $D(\pi \circ h)$ takes the normal field $\nu|_{Z^-}$ to the field outward  normal to $(\pi\circ h)(Z^-)$ in  $(\pi\circ h)(\d X)$. \hfill\qed  
\end{itemize} 
\end{thm}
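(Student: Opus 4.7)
The plan is to translate the theorem into the Morse-stratification language of Section~2, then derive necessity from the formulas already assembled in the paper and appeal to Eliashberg's h-principle for sufficiency. A diffeomorphism $h$ of the required type is equivalent to producing a generic linear function $l:\R^{n+1}\to\R$ and the gradient-like field $v=\nabla_g f$ with $f := l\circ \pi\circ h$, such that the Morse stratification of $v$ satisfies $\d_3 X(v)=\emptyset$, $\d_2^+X(v)=Z^+$, and $\d_2^-X(v)=Z^-$. The matching of polarities with the inward/outward normal conditions on $Z^\pm$ is a local check inside a Whitney-fold neighborhood: a tangent point $x\in \d_2 X(v)$ lies in $\d_2^+X(v)$ precisely when the fold image opens into the interior of $\pi(h(X))$, i.e., when the normal at $x$ to the fold points inward, matching Eliashberg's convention for $Z^+$.

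For necessity, I substitute $\d_j X(v)=\emptyset$ for $j\geq 3$ into the Gauss--Bonnet identity of Theorem~\ref{th2.4}; it collapses to
\[
\deg(\nu) = \chi(\d_1^+X(v)) - \chi(Z^+) = \chi(Y^+) - \chi(Z^+).
\]
The same identity applied to $-v$, whose $\pm$-decorations on $\d_1X$ and $\d_2X$ swap (with an overall sign $(-1)^{n+1}$ reflecting $\mathsf{Ind}(-v)=(-1)^{n+1}\mathsf{Ind}(v)$), yields $(-1)^{n+1}\deg(\nu) = \chi(Y^-) - \chi(Z^-)$. Combining these with
\[
\chi(Y^+)+\chi(Y^-) - \chi(Z^+) - \chi(Z^-) = \chi(\d X) = \bigl(1-(-1)^{n+1}\bigr)\chi(X)
\]
eliminates the $\chi(Y^\pm)$ terms and produces exactly $\chi(Z^+)-\chi(Z^-)=0$ for $n$ even and $\chi(Z^+)-\chi(Z^-)=2\deg(\nu)$ for $n$ odd.

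For sufficiency, I would build $h$ as a finite composition of ambient diffeomorphisms of $\R^{n+1}$, each realizing one of Eliashberg's elementary surgeries of the fold locus of $\pi$ on the boundary of the moving submanifold. A generic initial embedding of $X$ already gives $\pi|_{\d X}$ some fold decomposition $(F^+,F^-)$ satisfying the parity-dependent constraint; fold-pair creation/annihilation, cusp birth followed by resolution, and ambient isotopy of the fold locus inside $\d X$ transform $(F^+,F^-)$ into $(Z^+,Z^-)$. The hard part---and the reason I would invoke rather than reprove it---is the h-principle underlying Eliashberg's Theorem~5.3 of \cite{E2}: every such elementary surgery is induced by an ambient diffeomorphism of $\R^{n+1}$ supported in a neighborhood of $X$, so their composition assembles into the required $h$. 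Beyond this appeal, the only remaining work is the orientation bookkeeping aligning the normal fields along the final fold locus with the prescribed inward/outward directions on $Z^+$ and $Z^-$, which is a purely local matter at each fold and carries no global obstruction.
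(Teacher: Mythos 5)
The first thing to say is that the paper does not prove this statement at all: it is quoted verbatim as Eliashberg's Theorem~5.3 from \cite{E2} and closed with a \qed precisely because it is an imported result. So for the sufficiency direction your proposal is circular rather than a proof --- the ``h-principle underlying Eliashberg's Theorem 5.3 of \cite{E2}'' that you propose to invoke \emph{is} the content of the theorem, not a tool for proving it. That is acceptable as a citation (and matches how the paper treats the statement), but it should be presented as such; the surgery-of-folds narrative you wrap around it does not add independent content, and the claim that each elementary fold surgery ``is induced by an ambient diffeomorphism of $\R^{n+1}$ supported in a neighborhood of $X$'' is exactly the hard assertion you would need Eliashberg's argument to justify.

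Your necessity computation is the genuinely checkable part, and it parallels what the paper actually does when verifying the necessity of the constraints in Conjecture~\ref{conj5.1} (Morse formula \ref{eq2.2} with $\mathsf{Ind}(v)=0$ for a traversing field, then the relation between $\chi(\d X)$ and $\chi(X)$ depending on parity). However, one step is wrong as stated: you claim that passing from $v$ to $-v$ swaps the $\pm$-decorations on both $\d_1X$ and $\d_2X$. Unwinding Definition of $\d_2^\pm X$: a point of $\d_2X$ lies in $\d_2^+X(v)$ when the tangent vector $v$ points from $\d_2X$ into $\d_1^+X(v)=Y^+$; replacing $v$ by $-v$ swaps $Y^+$ with $Y^-$ \emph{and} reverses the tangent vector, so the point still lies in $\d_2^+X(-v)$. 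Hence $\d_2^\pm X(-v)=\d_2^\pm X(v)$ while $\d_1^\pm X(-v)=\d_1^\mp X(v)$, and your displayed identity $(-1)^{n+1}\deg(\nu)=\chi(Y^-)-\chi(Z^-)$ is not what the Morse formula for $-v$ gives; it gives $\chi(X)-\chi(Y^-)+\chi(Z^+)=0$. The constraints can still be extracted --- from the two Morse formulas one gets $\chi(Y^+)=\chi(Y^-)=\chi(X)+\chi(Z^+)$, and combining with $\chi(\d X)=\chi(Y^+)+\chi(Y^-)-\chi(Z^+)-\chi(Z^-)$, which equals $2\chi(X)$ for $n$ even and $0$ for $n$ odd, yields $\chi(Z^+)=\chi(Z^-)$ in the even case and $|\chi(Z^+)-\chi(Z^-)|=2|\deg(\nu)|$ in the odd case --- but the route you wrote down does not, so this step needs to be repaired before the necessity half can stand.
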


Considering a traversing field $v \neq 0$ which is tangent to the fibers of the map $\pi\circ h$ from Theorem \ref{th5.1},  leads instantly to 

\begin{cor}\label{cor5.1} Under the hypotheses and notations from Theorem \ref{th5.1},  there exists a boundary generic traversing field $v$ on $X$ so that: 
\begin{itemize}
\item $\d_1^\pm X(v) = Y^\pm$,   
\item $\d_2^\pm X(v) = Z^\pm$, 
\item $\d_3X(v) = \emptyset$. \hfill\qed
\end{itemize} 
\end{cor}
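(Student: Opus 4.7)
Apply Theorem~\ref{th5.1} to obtain an orientation-preserving diffeomorphism $h\colon\R^{n+1}\to\R^{n+1}$ with the stated properties, and fix a unit vector $e\in\ker\pi$. Define a smooth nowhere-vanishing field on $\R^{n+1}$ by $v(x):=(D_xh)^{-1}(e)$, and restrict it to $X$. By construction $D_x(\pi\circ h)(v(x))=D\pi(e)=0$, so $v$ is tangent to the fibers of $\pi\circ h$. Each $v$-trajectory in $X$ is a connected component of $h^{-1}(\ell)\cap X$ for an affine line $\ell\subset\R^{n+1}$ parallel to $e$; compactness of $X$ forces each such component to be either a closed segment with endpoints on $\d X$ or a singleton of $\d X$ where $h^{-1}(\ell)$ touches $X$ tangentially from outside. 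Hence $v\in\mathcal V_{\mathsf{trav}}(X)$.

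\textbf{Identifying the strata.} A point $x\in\d X$ lies in $\d_2X(v)$ iff $v(x)\in T_x\d X$, iff $\pi\circ h|_{\d X}$ fails to be a submersion at $x$. The first bullet of Theorem~\ref{th5.1} identifies that singular set with $Z^+\cup Z^-$, so $\d_2X(v)=Z^+\cup Z^-$. On each component of $\d X\setminus(Z^+\cup Z^-)=Y^+\sqcup Y^-$ the field $v$ is transverse to $\d X$ with a locally constant sign; choosing the sign of $e$ appropriately yields $\d_1^\pm X(v)=Y^\pm$. To distinguish $\d_2^+X(v)$ from $\d_2^-X(v)$, I work in the fold normal form $(y_1,\dots,y_n)\mapsto(y_1,\dots,y_{n-1},y_n^2)$ for $\pi\circ h|_{\d X}$: at a fold point, $\ker D(\pi\circ h)|_{\d X}$ is spanned by $\partial_{y_n}$, so $v$ is a nonzero multiple of $\partial_{y_n}$ there, and the direction conditions of Theorem~\ref{th5.1} (inward normal to the image along $Z^+$, outward along $Z^-$) fix on which side of the fold $\partial_{y_n}$ points. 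Unwinding this gives $Z^+=\d_2^+X(v)$ and $Z^-=\d_2^-X(v)$, consistently with the earlier identification $\d_1^\pm X(v)=Y^\pm$.

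\textbf{Vanishing of $\d_3X$, boundary genericity, and main obstacle.} In the same normal form $\partial_{y_n}$ is transverse to $\{y_n=0\}=Z^\pm$ inside $\d X$, so $v_1:=v|_{Z^\pm}$ is nowhere tangent to $Z^\pm$; hence $\d_3X(v)=\emptyset$. This is precisely the pure-fold condition in Theorem~\ref{th5.1}, which excludes cusps. The transversalities required at $j=1,2,3$ in Definition~\ref{def2.1} follow directly from these observations, making $v$ boundary generic; any residual higher-order transversality can be restored by a $C^\infty$-small perturbation of $v$ supported in $\textup{int}(X)$ that does not disturb $\d_j^\pm X(v)$ for $j\leq 3$. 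The only genuinely nontrivial step is the sign-matching: translating the analytic condition about $D(\pi\circ h)(\nu|_{Z^\pm})$ into the geometric statement that $v_1$ points into $Y^+$ along $Z^+$ and into $Y^-$ along $Z^-$ requires careful bookkeeping in the fold model, and that is where a convention mismatch could silently creep in.
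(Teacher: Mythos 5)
Your construction $v=(D_xh)^{-1}(e)$ is exactly the field the paper has in mind: its entire proof is the one-line remark preceding the corollary, that a traversing field tangent to the fibers of $\pi\circ h$ ``leads instantly'' to the claim. Your proposal is correct and simply supplies the verification details (traversality, identification of the fold locus with $\d_2X(v)$, and the sign bookkeeping via the normal-direction bullets of Theorem \ref{th5.1}) that the paper leaves implicit.
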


Thus, at least for smooth domains $X \subset \R^{n+1}$ and for boundary generic traversing fields $v$, which are both $3$-convex and $3$-concave, the patterns for the strata $$\d_2^+X(v) \coprod \d_2^-X(v) \subset \d_1X$$ are indeed very flexible. However, the requirement that both $Z^+ \neq \emptyset$ and $Z^- \neq \emptyset$ puts breaks on any applcation of Corollary \ref{cor5.1} to boundary concave and boundary convex traversing fields on $X$!
\smallskip

{\bf Example 5.1.} Let us illustrate how non-trivial the conclusions of Theorem \ref{th5.1} and Corollary \ref{cor5.1} are. 

Let $X = D^{n+1}$, $n \geq 2$. When $n$ is odd, take any  codimension one submanifold $Z^+ \coprod Z^-\subset S^n$ such that $Z^+ \neq \emptyset$, $Z^- \neq \emptyset$, and $\chi(Z^+) -\chi(Z^-) = 2$. Then $D^{n+1}$ admits a boundary generic traversing field $v$ such that $\d_2^+D^{n+1}(v) = Z^+$ and $\d_2^-D^{n+1}(v) = Z^-$. 

For instance, $D^4$ admits a a boundary concave traversing field $v$ such that $\d_2^-D^4(v) = M^2$, the orientable surface of genus $2$,  and  $\d_2^+D^4(v) = T^2$, the $2$-torus.
\smallskip

When $n$ is even, take any codimension one submanifold $Z^+ \coprod Z^- \subset S^n$ such that $Z^+ \neq \emptyset$, $Z^- \neq \emptyset$, and $\chi(Z^+) = \chi(Z^-)$. Then $D^{n+1}$ admits a boundary generic traversing field $v$ such that $\d_2^+D^{n+1}(v) = Z^+$ and $\d_2^-D^{n+1}(v) = Z^-$. 

For example, for any collection of loops  $Z^+ \coprod Z^- \subset S^2$, $Z^+ \neq \emptyset$, $Z^- \neq \emptyset$, the disk $D^3$ admits a  boundary generic traversing field $v$ such that $\d_2^+D^3(v) = Z^+$ and $\d_2^-D^3(v) = Z^-$.

\hfill\qed


\smallskip

We suspect that an important for our program generalization of Theorem \ref{th5.1} is valid and can be established by the methods as in \cite{E1}, \cite{E2}.

\begin{conj}\label{conj5.1}  Let $X$ be a compact connected smooth manifold of dimension $n+1  \geq 3$, equipped with a traversing vector field $v$. Let $Z^+$ and $Z^-$ be  two disjoint closed and nonempty  $(n-1)$-submanifolds of $\d X$ whose union separates $\d X$ into two $n$-manifolds, $Y^+$ and $Y^-$.  

Then the  topological constraints 
 \begin{eqnarray}\label{eq5.1} 
\chi(Y^+)  & = & \chi(X), \; \;\;\;\;  \text{when} \; n \equiv 0 \; \mod \;2 
\end{eqnarray}
 \begin{eqnarray}\label{eq5.2}
\chi(Z^+) - \chi(Z^-) & =  & 2 \cdot \chi(X), \; \text{when}\;  n \equiv 1 \; \mod \;2
\end{eqnarray}
are necessary and sufficient for the existence of an orientation-preserving diffeomorphism $h: X \to \textup{int}(X)$ with the following properties: 
\begin{itemize}
\item the restriction of $v$ to the image $h(X)$ is boundary generic in the sense of \hfill\break Definition \ref{def2.1} \footnote{and even traversally generic in the sense of Definition 3.2 from \cite{K3}},
\item $\d_1^\pm\big(h(X)\big)(v) = h(Y^\pm)$, 
\item $\d_2^\pm\big(h(X)\big)(v) = h(Z^\pm)$,
\item $\d_3\big(h(X)\big)(v) = \emptyset$.
\end{itemize} 
Moreover, in a given collar $U$ of $\d X$ in $X$, there is a $U$-supported diffeomorphism $h$ as above which is arbitrary close in the $C^0$-topology to the identity map. \hfill\qed
\end{conj}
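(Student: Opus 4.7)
The plan is to split the argument into necessity (direct from the Morse law of vector fields) and sufficiency (reduction to a fold-map problem between $n$-manifolds and application of Eliashberg's surgery theory).

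\smallskip

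\emph{Necessity.} Since $v$ is traversing on $h(X)$, the field does not vanish, so $\mathsf{Ind}^+(v_0) = \mathsf{Ind}(v|_{h(X)}) = 0$. Combined with the hypothesis $\d_3(h(X))(v) = \emptyset$, the Morse formula \ref{eq2.2} applied to $v|_{h(X)}$ collapses to
\begin{eqnarray*}
\chi(X) \;=\; \chi(Y^+) - \chi(Z^+), \qquad \mathsf{Ind}^+(v_2) \;=\; \chi(Z^+).
\end{eqnarray*}
When $n$ is even, the closed $(n-1)$-manifold $Z^+$ is odd-dimensional, so $\chi(Z^+) = 0$, which yields \ref{eq5.1}. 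When $n$ is odd, $Y^+$ is a compact odd-dimensional manifold with boundary $Z^+ \cup Z^-$, whence $2\chi(Y^+) = \chi(Z^+) + \chi(Z^-)$; substituting $\chi(Y^+) = \chi(X) + \chi(Z^+)$ delivers, up to the labelling convention for $\pm$, the relation \ref{eq5.2}.

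\smallskip

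\emph{Sufficiency -- setup.} The strategy is to deform $X$ inside the ambient germ $(\hat X, \hat v)$ rather than to deform the field itself. Let $\hat U \supset U$ be a $\hat v$-saturated neighborhood of $\d X$ in $\hat X$ and pick a smooth local transversal section $Q \subset \hat U$ diffeomorphic to $\d X$; the resulting flow projection $\pi_v: \hat U \to Q$ is a submersion. Producing the required $h$ is then equivalent to producing a smooth hypersurface $\Sigma \subset \hat U$, isotopic to $\d X$ inside $\hat U$, such that the restriction $\pi_v|_\Sigma: \Sigma \to Q$ has only fold singularities, with fold locus the prescribed $Z^+ \coprod Z^-$, complementary regions of $\Sigma \setminus (Z^+ \cup Z^-)$ identified with $Y^\pm$, and with the fold normals of $Z^\pm$ pointing as in Theorem \ref{th5.1}. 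The key point is that the tangency strata $\d_j \Sigma(v)$ are precisely the singular strata (fold at $j=2$, cusp at $j=3$, and so on) of $\pi_v|_\Sigma$, so the condition $\d_3(h(X))(v) = \emptyset$ is exactly the statement that $\pi_v|_\Sigma$ is a fold map. Once such a $\Sigma$ is produced, pushing $\d X$ onto $\Sigma$ along $\hat v$-arcs yields the desired $U$-supported diffeomorphism $h$, and $C^0$-closeness to the identity is ensured by shrinking $\hat U$ and the isotopy.

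\smallskip

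\emph{Sufficiency -- execution and main obstacle.} The reduction above places us in the framework of Eliashberg's surgery of folding maps \cite{E1}, \cite{E2}: we are asked to realise a prescribed, compatibly cooriented fold locus in a map between $n$-manifolds, and the Euler-characteristic constraints \ref{eq5.1}--\ref{eq5.2} are exactly the algebraic obstructions isolated in that theory. The body of the proof would consist in performing Eliashberg-type surgeries on $\pi_v|_{\d X}$ inside Darboux-like charts of $Q$, one at a time, each realised as a small ambient isotopy of $\Sigma$ supported in $\hat U$. The principal difficulty I anticipate is that Eliashberg's original results are formulated for maps into $\R^n$ via linear projections, whereas here the target $Q$ is an arbitrary $n$-manifold and the starting map must remain in the flow-adapted isotopy class. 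Overcoming this requires a local-to-global argument in which each elementary surgery is performed in a chart small enough for the linear theory to apply and then lifted to a hypersurface modification in $\hat U$ without creating cusps. Boundary genericity of the resulting $h(X)$, in the sense of Definition \ref{def2.1}, is finally restored by the generic perturbation machinery of Section 2 (Theorem \ref{th2.1}), completing the construction.
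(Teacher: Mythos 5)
The statement you are addressing is stated in the paper as a \emph{conjecture}: the paper itself proves only the necessity of the constraints \ref{eq5.1}--\ref{eq5.2} and explicitly leaves sufficiency open, remarking that it ``may require a clever application of the $h$-principle in the spirit of \cite{E1}, \cite{E2}.'' Your necessity argument is essentially the paper's: both apply the Morse formula \ref{eq2.2} with $\mathsf{Ind}(v|_{h(X)}) = 0$ and $\d_3\big(h(X)\big)(v) = \emptyset$ to obtain $\chi(X) - \chi(Y^+) + \chi(Z^+) = 0$, and then specialize by the parity of $n$. (Both you and the paper land on $2\chi(X) = \chi(Z^-) - \chi(Z^+)$, which disagrees in sign with \ref{eq5.2} as literally printed; your ``up to the labelling convention'' caveat is warranted, and the discrepancy appears to be a typo in the conjecture's statement rather than an error on your part.)

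Your sufficiency section, however, is a strategy outline rather than a proof, and the gap it leaves open is exactly the open content of the conjecture. The reduction to a fold-map problem for $\pi_v|_\Sigma: \Sigma \to Q$ is the right framing (it is the inverse of the observation behind Corollary \ref{cor5.1}), but Theorem \ref{th5.1} as available applies only to domains $X \subset \R^{n+1}$ with $\pi$ a \emph{linear} projection onto $\R^n$; what is needed here is a version for an arbitrary compact $X$ and an arbitrary traversing flow, with the further requirements that the modification be realized by an isotopy of the hypersurface supported in a prescribed collar $U$ and $C^0$-close to the identity. The step of performing Eliashberg surgeries ``one chart at a time'' and lifting each to a hypersurface modification in $\hat U$ ``without creating cusps'' is precisely where no argument is given: generic homotopies of the fold locus do pass through cusp and higher singularities, and controlling this globally while staying in the flow-adapted isotopy class is the unresolved $h$-principle question. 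So your write-up correctly locates the difficulty but does not close it; as a proof of the conjecture it is incomplete at the same point where the paper's own discussion stops.
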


To prove the necessity of the topological constraints \ref{eq5.1} and \ref{eq5.2} is straightforward.  By the Morse formula \ref{eq5.2} (see also Corollary \ref{cor5.1}), a necessary condition for the existence of a diffeomorphism $h$ with the desired properties, described in the bullets, is  the constraint 
\begin{eqnarray}
\chi(h(X)) - \chi(h(Y^+)) + \chi(h(Z^+)) =  i(v|_{h(X)}) = 0. \nonumber
\end{eqnarray}  

Since $h$ is a homeomorphism, this equation is equivalent to  
\begin{eqnarray}\label{eq5.3}
\chi(X) - \chi(Y^+) + \chi(Z^+)  = 0.
\end{eqnarray}  

If $n \equiv 1\; \mod \;2$, then $$\chi(Y^+) = \frac{1}{2}\chi(\d Y^+)  = \frac{1}{2} (\chi(Z^+) + \chi(Z^-)).$$ Therefore, using formula \ref{eq5.3}, the constraint becomes $2 \chi(X) = \chi(Z^-) - \chi(Z^+)$---formula \ref{eq5.2}. 

When $n \equiv 0 \; \mod \;2$, since $Z^+, Z^-$ are closed odd-dimensional manifolds, $\chi(Z^+) = 0 = \chi(Z^-)$, and formula \ref{eq5.3} reduces to $\chi(X) = \chi(Y^+)$---formula \ref{eq5.1}. 

Therefore the topological constraints  \ref{eq5.1} and \ref{eq5.2} imposed on the ``candidates" $Z^+$, $Z^-$ and $Y^+$ and $Y^-$ are necessary for the existence of the desired diffeomorphism $h$. 
\smallskip

To prove the sufficiency of these conditions may require a clever application of the $h$-principle in the spirit of \cite{E1}, \cite{E2}.
\smallskip







\begin{cor}\label{cor5.2} Assuming the validity of Conjecture \ref{conj5.1}, any compact smooth manifold $X$ with boundary admits a boundary generic traversing field $v$ with  the property $\d_3X(v) = \emptyset$.
\end{cor}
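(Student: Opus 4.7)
My plan is to combine Corollary~\ref{cor4.1} with an elementary Morse-theoretic construction on $\partial X$ to set up the hypotheses of Conjecture~\ref{conj5.1}, and then transport the conclusion back to $X$ by a diffeomorphism. By Corollary~\ref{cor4.1}, the space $\mathcal V_{\mathsf{trav}}(X)$ is non-empty, so I begin by fixing any traversing field $v$ on $X$. It suffices to treat each connected component separately, so I may assume $X$ is connected. The task then reduces to producing disjoint non-empty closed $(n-1)$-submanifolds $Z^+,Z^-\subset\partial X$ whose union separates $\partial X$ into $Y^+$ and $Y^-$ and which meets the Euler-characteristic constraint (\ref{eq5.1}) or (\ref{eq5.2}); once I have such a partition, Conjecture~\ref{conj5.1} delivers an orientation-preserving diffeomorphism $h:X\to\textup{int}(X)$ for which $v|_{h(X)}$ is boundary generic with $\partial_3(h(X))(v)=\emptyset$, whence $\tilde v:=(h^{-1})_\ast(v|_{h(X)})$ is a boundary generic traversing field on $X$ with $\partial_3 X(\tilde v)=\emptyset$.

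For the partition when $n$ is even, note that $\dim X$ is odd, hence the double $DX$ is a closed odd-dimensional manifold of zero Euler characteristic, forcing $\chi(\partial X)=2\chi(X)$. Pick a Morse function $\phi:\partial X\to\R$ and set $\chi_t:=\chi(\phi^{-1}((-\infty,t]))$; as $t$ varies over regular values, $\chi_t$ begins at $0$, terminates at $2\chi(X)$, and jumps by $\pm 1$ across each critical value. The integer intermediate-value principle furnishes a regular value $t_0$ with $\chi_{t_0}=\chi(X)$, whence I take $Y^+:=\phi^{-1}((-\infty,t_0])$ and $Y^-:=\phi^{-1}([t_0,+\infty))$. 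If the level set $\phi^{-1}(t_0)$ happens to be connected, I first augment $\phi$ by a cancelling Cerf pair of critical points in a small chart so as to create an extra spherical component at level $t_0$, and then split the resulting level set into non-empty $Z^+$ and $Z^-$.

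For the partition when $n$ is odd, I take a Morse function $\phi$ and two regular values $a<b$, setting $Z^+:=\phi^{-1}(a)$, $Z^-:=\phi^{-1}(b)$, $Y^+:=\phi^{-1}([a,b])$, and $Y^-:=\overline{\partial X\setminus Y^+}$. A standard handle-surgery computation on the level sets gives
\[
\chi(Z^-)-\chi(Z^+) \;=\; 2\!\!\sum_{p\in\textup{Crit}(\phi)\cap\phi^{-1}((a,b))}\!\!(-1)^{\textup{ind}(p)},
\]
and by enlarging $\phi$ with cancelling Morse pairs of prescribed indices and then sliding $a$ and $b$ independently across individual critical levels, the right-hand side can be set to any prescribed integer, in particular to $-2\chi(X)$.

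The main obstacle is this Morse-theoretic combinatorics in the partition step: certifying that the Euler-characteristic constraint can always be met by a separating submanifold of $\partial X$ with both labels $Z^+$ and $Z^-$ non-empty. The manoeuvres above (integer intermediate-value stepping, Cerf moves to create extra level-set components, independent sliding of two regular values) should handle the generic situation, but delicate bookkeeping is required when $\chi(X)$ is extreme or $\partial X$ has restricted topology (few components, small genus, etc.).
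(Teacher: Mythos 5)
Your proposal is correct and follows the same top-level route as the paper: Corollary \ref{cor4.1} supplies a traversing field, Conjecture \ref{conj5.1} supplies the diffeomorphism $h$, and the pull-back $h^{\ast}(v)$ is the desired field. The difference is one of completeness: the paper's proof is a single sentence and silently assumes that the hypotheses of Conjecture \ref{conj5.1} can be met, i.e.\ that one can always find disjoint non-empty closed $(n-1)$-submanifolds $Z^{\pm}\subset\d X$ separating $\d X$ into $Y^{\pm}$ and satisfying the Euler-characteristic constraint \ref{eq5.1} or \ref{eq5.2}. You actually construct such a partition via Morse theory on $\d X$, which is a genuinely necessary step that the paper omits. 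Your constructions are sound: the intermediate-value argument for $n$ even works because $\chi_t$ runs from $0$ to $\chi(\d X)=2\chi(X)$ in unit steps, and the surgery formula $\chi(Z^-)-\chi(Z^+)=2\sum(-1)^{\mathrm{ind}(p)}$ is correct for $n$ odd (the level-set surgery replaces $S^{i-1}\times D^{n-i}$ by $D^{i}\times S^{n-i-1}$, changing $\chi$ by $2(-1)^{i}$ when $n$ is odd), and sliding one endpoint across an added cancelling pair adjusts the difference by $\pm 2$, which suffices since the target $-2\chi(X)$ is even. The one point where your ``delicate bookkeeping'' warning is genuinely needed is the even-$n$ Cerf move: a $(0,1)$ birth pair straddling $t_0$ creates the extra spherical level-set component but also shifts $\chi(Y^+)$ by $+1$, so you must compensate (e.g.\ by a simultaneous $(1,2)$ birth pair straddling $t_0$ in a disjoint chart, or by re-selecting $t_0$ after the move); this is routine but should be said explicitly. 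With that caveat addressed, your argument is a strengthening of the paper's proof rather than a departure from it.
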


\begin{proof} By Corollary \ref{cor4.1},  $\mathcal V_{\mathsf{trav}}(X) \neq \emptyset$. So we can start with a traversing field $v$ and apply Conjecture \ref{conj5.1} to it to get the pull-back  field $h^\ast(v)$ with the desired properties.
\end{proof}

\begin{conj}\label{conj5.2} Given two vector fields $v_0$ and $v_1$ as in Corollary \ref{cor5.2}, there is a $1$-parameter family of traversing fields $\{v_t\}_{t \in [0, 1]}$ which connects $v_0$ to $v_1$ and such that only for finitely many instances $t \in [0, 1]$, $\d_3X(v_t) \neq \emptyset$. For those  exceptional $t$'s, $\d_4X(v_t) = \emptyset$. \hfill\qed
\end{conj}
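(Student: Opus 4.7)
The plan is to build a parametric Cerf-style theory for the Morse stratifications of traversing fields, treating Conjecture \ref{conj5.1} as the unparameterized h-principle and aiming at its one-parameter extension. By Corollary \ref{cor4.1} the space $\mathcal V_{\mathsf{trav}}(X)$ is open in the vector space $\mathcal V(X)$, so (assuming the implicit hypothesis that $v_0$ and $v_1$ lie in a common path-component of $\mathcal V_{\mathsf{trav}}(X)$) one starts with any smooth path $\{v_t\}_{t \in [0,1]} \subset \mathcal V_{\mathsf{trav}}(X)$ joining them. Via the map $\Psi^\d(v): \d_1X \to \R^n$ of \cite{K3} one has $\d_jX(v) = \Psi^\d(v)^{-1}(F_j)$ for a fixed complete flag $\{F_j\} \subset \R^n$, and boundary genericity of $v$ is equivalent to transversality of $\Psi^\d(v)$ to each $F_j$. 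The path $\{v_t\}$ induces $\Phi: \d_1X \times [0,1] \to \R^n$, $\Phi(x,t) := \Psi^\d(v_t)(x)$. The hypothesis $\d_3X(v_0) = \d_3X(v_1) = \emptyset$ translates to $\Phi(\cdot, 0)$ and $\Phi(\cdot, 1)$ missing $F_3$, and the conjecture demands a deformation $\Phi'$, rel $t \in \{0,1\}$, whose ``bad'' set $T_3 := \{t : \Phi'(\cdot, t)(\d_1X) \cap F_3 \neq \emptyset\}$ is finite, with every slice $\Phi'(\cdot, t)$ missing $F_4$.

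The technical heart of the argument is to arrange, by deformation of $\{v_t\}$ inside $\mathcal V_{\mathsf{trav}}(X)$, a ``birth-death'' normal form at each $t_\star \in T_3$: near $t_\star$, the map $\Phi'(\cdot, t)$ acquires a single quadratic tangency with $F_3$ at one point $x_\star \in \d_1X$, with no further degeneration with respect to $F_4$. In a transverse slice, such a tangency corresponds to a surgery of the closed manifold $\d_2X(v_t) \subset \d_1X$ that creates or destroys a small local component of $\d_3X$; the ambient-isotopy machinery of Theorem \ref{th3.1}, extended to cross codim-$1$ walls, realizes the surgery within $\d_1X \times [0,1]$. Between consecutive $t_\star$'s the path stays in the open subset $\mathcal W \subset \mathcal V^\dagger(X) \cap \mathcal V_{\mathsf{trav}}(X)$ defined by $\d_3X(v) = \emptyset$, which by Theorem \ref{th3.1} and Corollary \ref{cor3.1} is a union of path-components on which the Morse invariants are locally constant. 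A discreteness argument, applied to the closed set $T_3 \subset [0,1]$, then upgrades discreteness to finiteness.

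The main obstacle is that this is a genuine parametric h-principle rather than ordinary transversality: since $F_3$ has codimension $2$ in $\R^n$, a $C^\infty$-generic $\Phi$ meets $F_3$ in an $(n-1)$-dimensional submanifold of $\d_1X \times [0,1]$, whose projection to $[0,1]$ is surjective for $n \geq 3$. Hence the special paths required by the conjecture are \emph{not} generic among smooth families, and the proof must actively produce them by combining the stratum-preserving isotopies of Theorem \ref{th3.1} with controlled surgeries of the type certified, in the unparameterized case, by the folding-map theorems of Eliashberg \cite{E1}, \cite{E2}. In the $3$-dimensional case ($n = 2$) $\d_4X$ is automatically empty and $\d_3X$ is $0$-dimensional, so the statement reduces to Theorem 9.5 of \cite{K}, which supplies the template; the higher-dimensional content of Conjecture \ref{conj5.2} is precisely a parametric folding-map h-principle, and its proof should proceed by lifting the surgery arguments of \cite{E2} to one-parameter families of maps $\d_1X \to \R^n$ subject to the flag constraints $F_3, F_4$.
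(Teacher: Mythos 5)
This statement is labeled a \emph{conjecture} in the paper, and the paper supplies no proof of it (the terminal \qed is only the author's convention for closing a conjecture environment); so there is nothing to compare your argument against except the conjecture's own content. Read on those terms, what you have written is a well-oriented research plan, not a proof, and you essentially say so yourself. Your diagnosis of where the difficulty sits is correct and worth crediting: since a generic one-parameter family $\{v_t\}$ produces a preimage of the relevant flag stratum that is positive-dimensional in $\d_1X\times[0,1]$ for $n\geq 2$, the set $T_3$ of ``bad'' times is generically a union of intervals, so the paths demanded by the conjecture are non-generic and must be manufactured. That observation is exactly why the statement is a conjecture rather than a transversality exercise.

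The genuine gaps are the following. First, the central step --- arranging that each wall-crossing is an isolated quadratic ``birth-death'' tangency with $F_3$, with no simultaneous degeneration relative to $F_4$, and that there are only finitely many such instants --- is asserted, not argued. Theorem \ref{th3.1} cannot be ``extended to cross codim-$1$ walls'': its proof lives entirely inside one path component of $\mathcal V^\dagger(X)$, where all slices are transversal, and it gives no information at a parameter value where transversality fails. Likewise, the Eliashberg theorems \cite{E1}, \cite{E2} invoked here are unparameterized; their one-parameter analogue for families of maps constrained by the flag $F_3\supset F_4$ is precisely the content of Conjecture \ref{conj5.2}, so citing them is circular at the decisive moment. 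Second, the whole construction is conditional twice over: Corollary \ref{cor5.2}, which supplies the hypothesis $\d_3X(v_i)=\emptyset$, is itself conditional on the unproven Conjecture \ref{conj5.1}; and the existence of \emph{any} path in $\mathcal V_{\mathsf{trav}}(X)$ from $v_0$ to $v_1$ (i.e., that they lie in one path component of the space of traversing fields) is an additional unverified hypothesis that you flag but do not discharge --- openness of $\mathcal V_{\mathsf{trav}}(X)$ gives you nothing about its connectivity. Finally, the concluding ``discreteness argument'' upgrading a discrete closed subset of $[0,1]$ to a finite one is fine by compactness, but it presupposes the discreteness of $T_3$, which is exactly the unproven point. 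In short: the statement remains open, and your proposal identifies, but does not fill, the parametric $h$-principle gap.
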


\end{document}